 \newtheorem{thm}{Theorem}[section]
 \newtheorem{lem}[thm]{Lemma}
 \newtheorem{prop}[thm]{Proposition}
 \newtheorem{dfn}[thm]{Definition}
 \newtheorem{rmk}[thm]{Remark}
 \theoremstyle{definition}
 \theoremstyle{remark}
 \numberwithin{equation}{section}
\newcommand{\sm}{\left(\begin{smallmatrix}}
\newcommand{\esm}{\end{smallmatrix}\right)}
\newcommand{\mat}{\left(\begin{matrix}}
\newcommand{\emat}{\end{matrix}\right)}
\newcommand{\mb}{\mathbb}
\newcommand{\mbf}{\mathbf}
\newcommand{\mc}{\mathcal}
\def\CC{\mathbb{C}}
\def\HH{\mathbb{H}}
\def\QQ{\mathbb{Q}}
\def\RR{\mathbb{R}}
\def\ZZ{\mathbb{Z}}
\def\det{\mathrm{det}}
\def\Ker{\mathop{\rm Ker}}
\def\m{\mathrm{mod}}
\def\Res{\mathop{\rm Res}}
\def\tr{\mathrm{tr}}
\def\GL{\mathrm{GL}}
\def\Mp{\mathrm{Mp}}
\def\SL{\mathrm{SL}}
\begin{document}

\title{The Eichler cohomology theorem for Jacobi forms}


 \author{Dohoon Choi}
\author{Subong Lim}

 \address{School of liberal arts and sciences, Korea Aerospace University, 200-1, Hwajeon-dong, Goyang, Gyeonggi 412-791, Republic of Korea}
  \email{choija@kau.ac.kr}	

 \address{Korea Institute for Advanced Study(KIAS), 85 Hoegiro (Cheongnyangni-dong 207-43), Dongdaemun-gu, Seoul 130-722, Republic of Korea}
  \email{subong@kias.re.kr}

 \thanks{Keynote:  Jacobi form, Eichler cohomology}
  \thanks{2010
 Mathematics Subject Classification: 11F50, 11F67  }

\begin{abstract}
Let $\Gamma$ be a finitely generated Fuchsian group of the first kind which has at least one parabolic class. Eichler introduced a cohomology theory for Fuchsian groups, called as ``Eichler cohomology theory", and established the $\CC$-linear isomorphism from the direct sum of two spaces of cusp forms on $\Gamma$ with the same integral weight to the Eichler cohomology group of $\Gamma$. After the results of Eichler, the Eichler cohomology theory was generalized in various ways. For example, these results were generalized by Knopp to the cases with arbitrary real weights. In this paper, we extend the Eichler cohomology theory to the context of Jacobi forms. We define the cohomology groups of Jacobi groups which are analogues of Eichler cohomology groups and prove an Eichler cohomology theorem for Jacobi forms of arbitrary real weights. Furthermore, we prove that every cocycle is parabolic and that for some special cases we have an isomorphism between the cohomology group and the space of Jacobi forms in terms of the critical values of partial $L$-functions of Jacobi cusp forms.
\end{abstract}

\maketitle

\section{Introduction}
In \cite{E}, Eichler conceived the Eichler cohomology theory while studying generalized abelian integrals, which are now called the Eichler integrals. The Eichler cohomology theory is a cohomology theory for Fuchsian groups. More precisely, if we let $\Gamma$ be a finitely generated Fuchsian group of the first kind which has at least one parabolic class, Eichler defined the first cohomology group of $\Gamma$ with a certain module of polynomials as a coefficient and  established that this group is isomorphic to the direct sum of two spaces of cusp forms on $\Gamma$ with the same integral weight. After the results of Eichler, the Eichler cohomology theory was generalized in various ways.
For example, Gunning \cite{G2} and Kra \cite{K2,K3,K4,K1,K5} gave analogues of Eichler's results for higher dimensional cohomology groups and more general groups $\Gamma$, respectively, by using the spaces of holomorphic functions  on the complex upper half plane $\mathbb{H}$ as $\Gamma$-modules. Furthermore, Knopp \cite{K} introduced an infinite dimensional space of holomorphic functions satisfying a certain growth condition on which $\Gamma$ acts in a manner analogous to the action of $\Gamma$ on vector spaces of polynomials and adjusted the Eichler cohomology theorem to the situation that the weights of cusp forms are arbitrary real.

The extension of the Eichler cohomology theory to  more general functions other than modular forms such as generalized modular forms (GMF) and vector-valued modular forms are considered by many researchers (for example, see \cite{G, KLR, KR, L3, R}). For Jacobi forms, Choie and the second author studied in \cite{CL} the Jacobi integral analogous to the Eichler integral. The authors of \cite{CL} gave examples of Jacobi integrals including generalized Jacobi Poincar\'e series and suggested a definition of the cohomology group of a Jacobi group. In spite of this progress, it is still mysterious if there is an isomorphism between the cohomology group of a Jacobi group and the space of Jacobi forms on the group. Moreover, unfortunately it turns out that the first cohomology groups defined in \cite{CL} are infinite dimensional (see section \ref{section5} for details). So the aim of this paper is to refine the definition of the cohomology group of a Jacobi group given in \cite{CL} and to prove the existence of an isomorphism between the cohomology group of a Jacobi group and the space of Jacobi forms on the group.

A Jacobi form is a function of two variables $\tau\in\HH$ and $z\in\CC$,
appearing in the Fourier expansion of Siegel modular forms of degree 2, which satisfies modular transformation properties with respect to $\tau$, elliptic transformation properties with respect to $z$, and a certain growth condition.
A theory of Jacobi forms was developed systematically by Eichler and Zagier in \cite{EZ}.
In fact Jacobi forms played an important role in the proof of the Saito-Kurokawa conjecture (see \cite{A, EZ, M1, M2, M3,Zag}). After these works the theory of Jacobi forms was extensively developed with beautiful applications in many areas of mathematics and physics. For example, Zwegers in \cite{Z} constructed  special Jacobi forms that are crucial in studying the theory of mock theta functions. Jacobi forms also appeared in many literature on the theory of Donaldson invariants of $\CC\mb{P}^2$ that are related to gauge theory (see for example G$\ddot{\mathrm{o}}$ttsche and Zagier \cite{GZ}), and in recent work on the Mathieu moonshine (for example \cite{EO}). We denote the vector space of Jacobi forms (resp. cusp forms) of weight $k$, index $\mc{M}$ and multiplier system $\chi$ on $\Gamma^{(1,j)}$ by $J_{k,\mc{M},\chi}(\Gamma^{(1,j)})$ (resp. $S_{k,\mc{M},\chi}(\Gamma^{(1,j)})$), where $\Gamma\subset\SL(2,\ZZ)$ is a finitely generated Fuchsian group of the first kind which has at least one parabolic class and  $\Gamma^{(1,j)}$ denotes the {\it Jacobi group} $\Gamma \ltimes (\ZZ^{(j,1)})^2$. Here, for a ring $R$, $R^{(n,j)}$ is the set of $n$ by $j$ matrices whose entries are in $R$.

Now we investigate the existence of an isomorphism between the cohomology group of $\Gamma^{(1,j)}$ and the space of Jacobi forms on $\Gamma^{(1,j)}$. 
For $\gamma=\sm a&b\\c&d\esm \in\Gamma , X = (\lambda, \mu)\in (\ZZ^{(j,1)})^2$ and $\mc{M}\in\ZZ^{(j,j)}$ with $\mc{M}>0$ symmetric, we define
\[(\Phi|_{k,\mc{M},\chi} \gamma)(\tau,z) := (c\tau+d)^{-k}\bar{\chi}(\gamma)e^{-2\pi i\frac{c}{c\tau+d}\tr(\mc{M}zz^t)}\Phi(\gamma(\tau,z))\]
and
\[(\Phi|_{\mc{M}}X)(\tau,z) :=e^{2\pi i\tr(\mc{M}(\lambda\lambda^t\tau+ 2\lambda z^t+\mu\lambda^t))}\Phi(\tau,z+\lambda\tau+\mu),\]
where $\gamma(\tau,z) = (\frac{a\tau+b}{c\tau+d},\frac{z}{c\tau+d})$.
Then $\Gamma^{(1,j)}$ acts on the space of  functions on $\HH\times\CC^{(j,1)}$ by
\begin{equation} \label{slashoperator}
(\Phi|_{k,\mc{M},\chi} (\gamma,X))(\tau,z) := (\Phi|_{k,\mc{M},\chi} \gamma |_{\mc{M}}X)(\tau,z).
\end{equation}


We consider the following special $\mathbb{C}[\Gamma^{(1,j)}]$-module to define a cohomology group of $\Gamma^{(1,j)}$.

 \begin{dfn}\label{dfnofpme}
Let $\mc{P}^e_{\mc{M}}$ be the set of holomorphic functions $g(\tau,z)$ on $\HH\times\CC^{(j,1)}$ which satisfy the following conditions:
\begin{enumerate}
\item[(1)] $|g(\tau,z)| < K(|\tau|^\rho + v^{-\sigma})e^{2\pi\tr(\mc{M}yy^t)/v}$
for some positive constants $K,\rho$ and $\sigma$, where $\tau = u+iv\in\HH$ and $z = x+iy\in\CC^{(j,1)}$,
\item[(2)] $(g|_{\mc{M}} X)(\tau,z) = g(\tau,z)$ for every $X\in (\ZZ^{(j,1)})^2$.
\end{enumerate}
\end{dfn}
This set $\mc{P}^e_{\mc{M}}$ is preserved under the slash operator $|_{k,\mc{M},\chi}(\gamma,X)$ for any $k$, $\chi$ and $(\gamma,X)\in \Gamma^{(1,j)}$ and forms a vector space over $\CC$.
A collection $\{p_{(\gamma,X)}|\ (\gamma,X)\in \Gamma^{(1,j)}\}$ of elements of $\mc{P}^e_{\mc{M}}$ satisfying
\begin{equation} \label{cocycle}
p_{(\gamma_1,X_1)(\gamma_2,X_2)}(\tau,z) = (p_{(\gamma_1,X_1)}|_{-k+\frac j2,\mc{M},\chi}(\gamma_2,X_2))(\tau,z) + p_{(\gamma_2,X_2)}(\tau,z)
\end{equation}
is called a {\it{cocycle}} and a {\it{coboundary}} is a collection $\{p_{(\gamma,X)}|\ (\gamma,X)\in \Gamma^{(1,j)}\}$ such that
\[p_{(\gamma,X)}(\tau,z) = (p|_{-k+\frac j2, \mc{M},\chi}(\gamma,X))(\tau,z) - p(\tau,z),\]
for all $(\gamma,X)\in \Gamma^{(1,j)}$ with a fixed element $p(\tau,z)$ of $\mc{P}^e_{\mc{M}}$. The {\it{cohomology group}} $H^1_{-k+\frac j2,\mc{M},\chi}(\Gamma^{(1,j)},\mc{P}^e_{\mc{M}})$ is the quotient of the cocycles by the coboundaries. A cocycle $\{p_{(\gamma,X)}|\ (\gamma,X)\in \Gamma^{(1,j)}\}$ is called the {\it{parabolic cocycle}} if for every parabolic element $B$ in $\Gamma$ there exists a function $Q_{B}(\tau,z)$ in $\mc{P}^e_{\mc{M}}$ such that
\[p_{(B,0)}(\tau,z) = (Q_{B}|_{-k+\frac j2,\mc{M},\chi})(B,0)(\tau,z) - Q_{B}(\tau,z).\]
 The {\it{parabolic cohomology group}} $\tilde{H}^1_{-k+\frac j2,\mc{M},\chi}(\Gamma^{(1,j)},\mc{P}^e_{\mc{M}})$ is defined as the vector space obtained by forming the quotient of the parabolic cocycles by the coboundaries. In the following theorem
we prove the existence of an isomorphism between the parabolic cohomology group of $\Gamma^{(1,j)}$ and the space of Jacobi cusp forms on $\Gamma^{(1,j)}$.

\begin{thm} \label{main1} For a real number $k$, index $\mc{M}\in \ZZ^{(j,j)}$ with $\mc{M}>0$ and multiplier system $\chi$ of weight $-k+\frac j2$, we have an isomorphism
\[\tilde{\eta}: S_{k+2+\frac j2, \mc{M},\bar{\chi}}(\Gamma^{(1,j)}) \cong \tilde{H}^1_{-k+\frac j2,\mc{M},\chi}(\Gamma^{(1,j)},\mc{P}^e_{\mc{M}}).\]
\end{thm}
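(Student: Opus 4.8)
The plan is to imitate the classical Eichler--Knopp strategy, passing through Jacobi integrals and period functions. First I would construct the map $\tilde\eta$. Given a cusp form $\Phi \in S_{k+2+\frac j2,\mc{M},\bar\chi}(\Gamma^{(1,j)})$, one forms a \emph{Jacobi--Eichler integral} $\tilde\Phi$ by integrating $\Phi$ (suitably many times in $\tau$, against a kernel adapted to the index $\mc{M}$, e.g.\ a period-type integral $\int_\tau^{i\infty}\Phi(w,z)(w-\tau)^{k}\,dw$ when $k$ is a nonnegative integer, and in general via the Knopp-style definite integral over a path that makes the slash action on the non-holomorphic completion behave well). Then $p_{(\gamma,X)} := \tilde\Phi|_{-k+\frac j2,\mc{M},\chi}(\gamma,X) - \tilde\Phi$ is checked to be holomorphic (the cusp-form decay kills the would-be singular terms, so $p_{(\gamma,X)}\in\mc{P}^e_{\mc{M}}$: condition (2) follows because $\tilde\Phi$ inherits the elliptic transformation in $z$ from $\Phi$, and the growth bound in condition (1) is exactly the Knopp-type estimate coming from the cusp decay), and the cocycle relation \eqref{cocycle} is automatic. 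The parabolicity of the resulting cocycle follows from the rapid decay of $\Phi$ at each cusp of $\Gamma$: at a parabolic $B$ one can define $Q_B$ by the analogous integral along the vertical line through the cusp, and the difference $\tilde\Phi - Q_B$ is $B$-invariant up to the required coboundary term. Well-definedness on cohomology classes requires checking that changing $\tilde\Phi$ by a function whose period cocycle is a coboundary only changes $\Phi$ by $0$; this is where one uses that a Jacobi integral with vanishing periods is itself a Jacobi form, and a holomorphic Jacobi form killed by $D^{k+1}$-type operators (the relevant number of $\tau$-derivatives) with cusp-form growth must vanish.

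Next, injectivity of $\tilde\eta$. Suppose $\tilde\eta(\Phi)$ is a coboundary, so $\tilde\Phi = p + \psi$ where $p\in\mc{P}^e_{\mc{M}}$ realizes the coboundary and $\psi$ is $\Gamma^{(1,j)}$-invariant under $|_{-k+\frac j2,\mc{M},\chi}$. Applying the appropriate differential operator (the Jacobi analogue of $(\partial/\partial\tau)^{k+1}$, which annihilates $\mc{P}^e_{\mc{M}}$ because its elements satisfy a polynomial-type growth condition in $\tau$ after accounting for the $e^{2\pi\tr(\mc{M}yy^t)/v}$ factor) recovers $\Phi$ from $\tilde\Phi$ up to a nonzero constant, and simultaneously shows $\Phi$ equals the same operator applied to $\psi$; but $\psi$ being an invariant function of the right type makes this a holomorphic Jacobi form of weight $k+2+\frac j2$, and the growth forces it to be a cusp form, so in fact one must argue $\Phi=0$ directly. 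The cleanest route is: the operator kills $p$, hence $\Phi = L\psi$ with $\psi$ invariant; then $\psi$ is itself a (meromorphic/holomorphic) Jacobi integral of the trivial cocycle, and one shows such a $\psi$ must be a Jacobi form, so $\Phi = L(\text{Jacobi form of weight } -k+\tfrac j2)$; for $-k+\frac j2 < 0$ or for weight reasons this Jacobi form is $0$, or more robustly one pairs against periods to see $\Phi=0$. I would set this up carefully using the Bol-type identity relating $L$ applied to a slash-invariant function to a lower-weight slash-invariance.

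Then surjectivity, which is the heart of the matter. Given a parabolic cocycle $\{p_{(\gamma,X)}\}$, I would build a \emph{generalized Jacobi Poincar\'e series} / Knopp-style Eichler integral $F$ on $\HH\times\CC^{(j,1)}$ such that $F|_{-k+\frac j2,\mc{M},\chi}(\gamma,X) - F = p_{(\gamma,X)}$ for all $(\gamma,X)$: concretely, using a Poincar\'e-series-type kernel $\phi(\tau,z;w,\zeta)$ for the Jacobi group one writes $F(\tau,z) = \sum_{(\gamma,X)} \bigl(p_{(\gamma,X)}\cdot \text{(kernel)}\bigr)|(\gamma,X)$-type average, chosen so that the cocycle relation makes the formal sum transform correctly; convergence is guaranteed by the growth condition (1) defining $\mc{P}^e_{\mc{M}}$ together with the weight being large enough after the $k+2+\frac j2$ shift (this is exactly why Knopp's growth condition is the ``right'' one). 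Applying the differential operator $L$ to $F$ then produces $\Phi := LF$, which is now $\Gamma^{(1,j)}$-invariant under $|_{k+2+\frac j2,\mc{M},\bar\chi}$ because $L$ kills each $p_{(\gamma,X)}\in\mc{P}^e_{\mc{M}}$; that $\Phi$ is holomorphic and transforms elliptically in $z$ is inherited from $F$, and one checks $\tilde\eta(\Phi)$ differs from $\{p_{(\gamma,X)}\}$ by a coboundary by running the injectivity argument in reverse. Finally, the parabolicity hypothesis is what forces $\Phi$ to actually be a \emph{cusp} form rather than merely a weakly holomorphic Jacobi form: near each cusp, the existence of $Q_B\in\mc{P}^e_{\mc{M}}$ with $p_{(B,0)}=Q_B|(B,0)-Q_B$ lets one subtract $Q_B$ from $F$ to get a genuinely $B$-invariant local Eichler integral, whose $L$-image then has the decay of a cusp form at that cusp (combining the theta-decomposition in $z$ with the Fourier expansion in $\tau$ to read off vanishing of the non-cuspidal Fourier coefficients).

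The main obstacle I expect is precisely this surjectivity step: making the Poincar\'e-type series that inverts the coboundary map converge and be holomorphic on all of $\HH\times\CC^{(j,1)}$, and then extracting from the abstract parabolicity condition the sharp growth/Fourier-coefficient information needed to land in $S_{k+2+\frac j2,\mc{M},\bar\chi}$ rather than a larger space. For real (non-integral) weights one cannot use polynomial period functions, so the whole argument must be carried out with Knopp's growth-condition module $\mc{P}^e_{\mc{M}}$, and one has to verify at each stage that the index-$\mc{M}$ theta factor $e^{-2\pi i\frac{c}{c\tau+d}\tr(\mc{M}zz^t)}$ in the slash operator and the $e^{2\pi\tr(\mc{M}yy^t)/v}$ growth factor are mutually compatible—this bookkeeping, together with choosing the correct differential operator $L$ and proving a Bol-type identity for it in the Jacobi setting, is where the real work lies; the injectivity and well-definedness parts are comparatively formal once $L$ and its kernel are in hand.
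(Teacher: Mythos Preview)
Your approach differs substantially from the paper's, and it contains a genuine gap.

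The paper does \emph{not} work directly with Jacobi forms. Instead it uses the theta decomposition $\Phi(\tau,z)=\sum_{a\in\mc{N}} f_a(\tau)\,\theta_{2\mc{M},a,0}(\tau,z)$ to reduce everything to vector-valued modular forms on $\Gamma$: Lemma~4.1 shows that theta expansion induces isomorphisms $S_{k+2+\frac j2,\mc{M},\bar\chi}(\Gamma^{(1,j)})\cong S_{k+2,\overline{\chi''},\overline{\rho''}}(\Gamma)$ and $\tilde H^1_{-k+\frac j2,\mc{M},\chi}(\Gamma^{(1,j)},\mc{P}^e_{\mc{M}})\cong \tilde H^1_{-k,\chi'',\rho''}(\Gamma,\mc{P})$, and the paper then proves the vector-valued Eichler cohomology theorem (Theorems~3.11 and~3.12) by adapting Knopp--Mawi. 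The map $\tilde\eta$ is defined componentwise through the $f_a$ via the \emph{non-holomorphic} integral $F_{a,\gamma}(\tau)=\bigl[\int_{\gamma^{-1}(i\infty)}^{i\infty} f_a(w)(w-\bar\tau)^k\,dw\bigr]^-$; injectivity is obtained by a Stokes-theorem pairing argument (Section~3.6), and surjectivity by constructing vector-valued generalized Poincar\'e series together with a dimension count based on Petersson's principal-parts condition (Sections~3.7--3.8). No differential operator in $\tau$ appears anywhere.

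The gap in your argument is the operator $L$. You assert there is a Jacobi analogue of $(\partial/\partial\tau)^{k+1}$ that ``annihilates $\mc{P}^e_{\mc{M}}$ because its elements satisfy a polynomial-type growth condition.'' This is false: $\mc{P}^e_{\mc{M}}$ consists of \emph{all} holomorphic functions with the stated growth and elliptic invariance, not polynomials in $\tau$; it is infinite-dimensional, and no nonzero holomorphic differential operator annihilates it. Polynomial growth is not the same as being a polynomial. The Bol identity and the operator $(\partial/\partial\tau)^{k+1}$ are available only for nonnegative integers $k$, which is exactly why Knopp introduced the module $\mc{P}$ in place of polynomials. Consequently both your injectivity step (``the operator kills $p$, hence $\Phi=L\psi$'') and your surjectivity step (``applying $L$ to $F$ produces $\Phi:=LF$, invariant because $L$ kills each $p_{(\gamma,X)}$'') collapse for general real $k$. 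The paper sidesteps this entirely: injectivity uses the Petersson pairing and Stokes' theorem rather than differentiation, and surjectivity never recovers a cusp form from an Eichler integral by differentiating---it instead exhibits an Eichler integral for a given parabolic cocycle via a generalized Poincar\'e series and then matches dimensions with $S_{k+2,\overline{\chi''},\overline{\rho''}}(\Gamma)$ using the principal-parts criterion.
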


For a cohomology group $H^1_{-k+\frac j2,\mc{M},\chi}(\Gamma^{(1,j)},\mc{P}^e_{\mc{M}})$ without the parabolic condition, we prove the following isomorphism, which comes from the fact that every cocycle is parabolic.

\begin{thm} \label{main2} Under the same map as in Theorem \ref{main1}, we have an isomorphism
\[\tilde{\eta}: S_{k+2+\frac j2, \mc{M},\bar{\chi}}(\Gamma^{(1,j)}) \cong H^1_{-k+\frac j2,\mc{M},\chi}(\Gamma^{(1,j)},\mc{P}^e_{\mc{M}}).\]
\end{thm}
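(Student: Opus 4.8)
The plan is to deduce Theorem \ref{main2} from Theorem \ref{main1} by showing that under the growth condition built into $\mc{P}^e_{\mc{M}}$, \emph{every} cocycle is automatically parabolic, so that the parabolic cohomology group and the full cohomology group coincide; then the isomorphism $\tilde\eta$ of Theorem \ref{main1} gives the claimed isomorphism verbatim. Concretely, given a cocycle $\{p_{(\gamma,X)}\}$ and a parabolic element $B\in\Gamma$ with fixed point $\mfr{q}$ (a cusp), I would produce a function $Q_B\in\mc{P}^e_{\mc{M}}$ with $p_{(B,0)} = Q_B|_{-k+\frac j2,\mc{M},\chi}(B,0) - Q_B$. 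The natural candidate is a ``period-integral'' type construction: after conjugating $B$ to the standard parabolic generator $\sm 1&h\\0&1\esm$ at $\infty$ via some $A\in\SL(2,\RR)$, the cocycle relation forces $p_{(B,0)}$ to be periodic up to the automorphy factor, hence expandable in a suitable Fourier-type series in $\tau$ and $z$; one then defines $Q_B$ termwise by dividing each Fourier coefficient by the appropriate $(e^{2\pi i n\tau/h}-1)$-type factor (the ``Hecke trick'' / Knopp's summation), checking that the growth condition (1) in Definition \ref{dfnofpme} survives because the dangerous terms are exactly those killed by the coboundary operation.

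The key steps, in order, would be: (i) Fix a complete set of inequivalent parabolic points $\mfr{q}_1,\dots,\mfr{q}_t$ of $\Gamma$ and for each choose a scaling matrix $A_i$ with $A_i\infty = \mfr{q}_i$ and $A_i^{-1}\Gamma_{\mfr{q}_i}A_i$ generated by translation by some $h_i$. (ii) For a parabolic $B$ fixing $\mfr{q}_i$, transport $p_{(B,0)}$ by $|_{-k+\frac j2,\mc{M},\chi}(A_i,0)$ and use the cocycle relation \eqref{cocycle} together with condition (2) of Definition \ref{dfnofpme} to show the transported function $P(\tau,z)$ satisfies a clean quasi-periodicity in $\tau\mapsto\tau+h_i$ and in the lattice translations, so it has an expansion $P(\tau,z)=\sum c_{n,r}(\tau,z\text{-exponentials})$ supported on a half-line of frequencies because of the polynomial-type growth in (1). (iii) Define $\mc{Q}(\tau,z)$ by the formal antiderivative under $B$: sum the series with each term divided by $(\zeta^n-1)$ where $\zeta = e^{2\pi i h_i/(\text{width})}$, discarding (or treating separately by a linear term in $\tau$) the finitely many resonant frequencies; verify $\mc{Q}$ again lies in $\mc{P}^e_{\mc{M}}$ by matching it against the growth estimate, and set $Q_B = \mc{Q}|_{-k+\frac j2,\mc{M},\chi}(A_i^{-1},0)$. (iv) Conclude that $\{p_{(\gamma,X)}\}$ is parabolic, hence $\tilde H^1 = H^1$ and apply Theorem \ref{main1}.

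I expect the main obstacle to be step (iii): controlling the growth of the antiderivative $\mc{Q}$ so that it still satisfies the bound $|g(\tau,z)| < K(|\tau|^\rho + v^{-\sigma})e^{2\pi\tr(\mc{M}yy^t)/v}$ of Definition \ref{dfnofpme}(1). The division by $(\zeta^n-1)$ is harmless for $n$ bounded away from the resonant values, but one must argue uniformly, and the resonant frequencies (where $\zeta^n=1$) genuinely require a separate term that is linear in $\tau$ — one has to check that the coefficient of such a term is forced to vanish, or else that its contribution is still a coboundary; this is precisely the point where the weight-shift $-k+\frac j2$ versus $k+2+\frac j2$ and the index-$\mc{M}$ elliptic transformation interact, and where the argument is most delicate. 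A secondary, more bookkeeping-type issue is checking consistency across the different parabolic classes and confirming that the resulting $Q_B$ is independent of the auxiliary choices up to a coboundary, so that the map on cohomology is well defined; this is routine but must be stated. Once every cocycle is shown parabolic, Theorem \ref{main2} is immediate from Theorem \ref{main1}.
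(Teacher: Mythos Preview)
Your overall strategy---reduce Theorem \ref{main2} to Theorem \ref{main1} by proving that every cocycle in $\mc{P}^e_{\mc{M}}$ is parabolic---is exactly what the paper does (see the sentence preceding Theorem \ref{main2}). However, your proposed mechanism for establishing parabolicity contains a genuine gap.

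In step (ii) you assert that the cocycle relation forces $p_{(B,0)}$ to be quasi-periodic in $\tau$, so that it admits a Fourier expansion in $\tau$ which you then invert termwise in step (iii). This is not so: for a general cocycle $\{p_{(\gamma,X)}\}$ the element $p_{(B,0)}$ is an arbitrary member of $\mc{P}^e_{\mc{M}}$, constrained only by identities such as $p_{(B^2,0)} = p_{(B,0)}|_{-k+\frac j2,\mc{M},\chi}(B,0) + p_{(B,0)}$, and these give no periodicity of $p_{(B,0)}$ itself under $\tau\mapsto\tau+h_i$. Condition (2) of Definition \ref{dfnofpme} provides elliptic invariance in $z$ only. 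So there is no Fourier series in $\tau$ to divide, and step (iii) has nothing to act on; your worries about resonant frequencies and growth of the antiderivative are therefore moot.

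The paper circumvents this by not working on the Jacobi side at all. Via the theta expansion (Lemma \ref{expansion2}) the group $H^1_{-k+\frac j2,\mc{M},\chi}(\Gamma^{(1,j)},\mc{P}^e_{\mc{M}})$ is identified with the vector-valued group $H^1_{-k,\chi'',\rho''}(\Gamma,\mc{P})$, and parabolicity is proven there as Theorem \ref{parabolicity}. The key input is Lemma \ref{taylor}: since $\rho''$ is unitary, the matrix $C=\bar{\chi}(Q)\rho''(Q)^{-1}$ is unitarily diagonalizable with unit-modulus eigenvalues, so the equation $Cf(\tau+1)-f(\tau)=g(\tau)$ in $\mc{P}$ decouples into scalar equations $d_j f'_j(\tau+1)-f'_j(\tau)=g'_j(\tau)$ with $|d_j|=1$, each solved by Knopp's Proposition 9 in \cite{K}. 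That construction does \emph{not} assume or use a Fourier expansion of $g'_j$; it is a direct construction preserving the growth bound defining $\mc{P}$. The finite-cusp case is then reduced to the translation case by conjugation, exactly as in your step (i). If you wish to stay on the Jacobi side, the honest route is to apply the theta expansion to $p_{(B,0)}$ first (legitimate by condition (2)), obtaining scalar components in $\mc{P}$, and then diagonalize and invoke Knopp componentwise---but at that point you have reproduced the paper's argument.
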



For some special cases we have an explicit isomorphism
\[\tilde{\eta}:S_{k+2+\frac j2, \mc{M},\chi}(\Gamma^{(1,j)}) \to \tilde{H}^1_{-k+\frac j2,\mc{M},\chi}(\Gamma^{(1,j)},\mc{P}^e_{\mc{M}})\]
 in term of the critical values of partial $L$-functions of Jacobi cusp forms.More precisely, suppose that $j=1$ and $k\in\ZZ_{>0}$ and that $\Phi(\tau,z)\in S_{k+2+\frac12,m,\chi}(\Gamma^{(1,1)})$, where $m\in\ZZ_{>0}$.
Let $T = \sm 1&\lambda\\ 0&1\esm,\ \lambda>0$, be a generator of $\Gamma_\infty$ and $\chi(T) = e^{2\pi i\kappa}$. Then
\[\Phi(\tau,z) = \sum_{4m(n+\kappa)-\lambda r^2>0} c\biggl(\frac{n+\kappa}{\lambda},r\biggr)q^{(n+\kappa)/\lambda}\zeta^r,\]
where $q = e^{2\pi i\tau},\ \zeta = e^{2\pi iz}$.
Let $N(n) = \frac{4m(n+\kappa)}{\lambda}-r^2$. Then $\Phi(\tau,z)$ also can be written as
\[\Phi(\tau,z) = \sum_{\mu (\m\ 2m)} \sum_{r\equiv \mu (\m 2m)} \sum_{n\in\ZZ\atop N(n)>0} C_\mu(N(n))q^{\frac{N(n)+r^2}{4m}}\zeta^r,\]
where $C_\mu(N) := c\biggl(\frac{N(n)+r^2}{4m},r\biggr)$. Then the {\it{partial\ L-functions}} of $\Phi(\tau,z)$ are defined by
\[L(\Phi,\mu,\gamma,s) = \sum_{n\in\ZZ\atop N(n)>0} \frac{C_\mu(N(n)) e^{2\pi i\frac{-dN(n)}{4mc}}} {(N(n)/4m)^s},\]
for $\gamma = \sm a&b\\ c&d\esm\in\Gamma$.
Then $\tilde{\eta}(\Phi)$ is a cocycle class given by a cocycle representative in the following way.
\begin{thm} \label{main3}
Suppose that a function $r(\Phi,(\gamma,X);\tau,z)$ is given by
\[r(\Phi,(\gamma,X); \tau,z) = \sum_{\mu=1}^{2m} \sum_{r\equiv\mu(\m\ 2m)} \sum_{k=0}^{n} \frac{k!(-1)^{k+n} \overline{L(\Phi, \mu,\gamma,n+1)}}{(k-n)!(2\pi i)^{n+1}}\biggl(\tau+\frac dc\biggr)^{k-n}q^{\frac{r^2}{4m}}\zeta^r,\]
where $(\gamma,X) \in\Gamma^{(1,1)}$ and $\gamma = \sm a&b\\c&d\esm$. Then a collection $\{r(\Phi,(\gamma,X);\tau,z)|\ (\gamma,X)\in\Gamma^{(1,1)}\}$ is a cocycle representative of $\tilde{\eta}(\Phi)$.
\end{thm}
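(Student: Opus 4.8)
The plan is to pass to the theta decomposition of $\Phi$ and thereby reduce the assertion to the classical Eichler--Shimura computation of period polynomials of (vector-valued) elliptic cusp forms. Write $\Phi(\tau,z)=\sum_{\mu\ (\m\ 2m)}h_\mu(\tau)\,\vartheta_{m,\mu}(\tau,z)$, where $\vartheta_{m,\mu}(\tau,z)=\sum_{r\equiv\mu\ (\m\ 2m)}q^{r^2/4m}\zeta^{r}$ is the standard index-$m$ theta function and $h_\mu(\tau)=\sum_{N(n)>0}C_\mu(N(n))q^{N(n)/4m}$; since $\Phi$ is a Jacobi cusp form of weight $k+2+\frac12$, the tuple $(h_\mu)_\mu$ is a vector-valued elliptic cusp form of weight $k+2$ for the representation of $\Gamma$ built from $\chi$ and the Weil representation attached to $m$. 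Recall that the isomorphism $\tilde{\eta}$ of Theorems \ref{main1}--\ref{main2} is realized through a Jacobi--Eichler integral of $\Phi$, obtained from the ordinary Eichler integrals $(\mc{E}h_\mu)(\tau)=\int_{\tau}^{i\infty}h_\mu(\tau')(\tau'-\tau)^{k}\,d\tau'$ together with the theta factors $\vartheta_{m,\mu}$. Since $\mc{P}^e_{m}$ is $|_{m}X$-invariant by condition (2) of Definition \ref{dfnofpme} and is preserved by $|_{-k+\frac12,m,\chi}\gamma$, the elliptic argument $X$ drops out (so that the cocycle values depend only on $\gamma$), and the manipulations underlying the proofs of Theorems \ref{main1} and \ref{main2} show that for $c\neq0$ the class $\tilde{\eta}(\Phi)$ is represented by
\[
(\gamma,X)\ \longmapsto\ \mathrm{const}\cdot\sum_{\mu\ (\m\ 2m)}\Bigl(\int_{-d/c}^{i\infty}h_\mu(\tau')(\tau'-\tau)^{k}\,d\tau'\Bigr)\vartheta_{m,\mu}(\tau,z)
\]
modulo a coboundary, where $-d/c=\gamma^{-1}(i\infty)$; for $\gamma\in\Gamma_\infty$ (the case $c=0$) the two endpoints coincide and the cocycle vanishes, which is why $c$ appears in the denominators in the statement.

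The central step is the Eichler--Shimura evaluation of the inner period integral $\int_{-d/c}^{i\infty}h_\mu(\tau')(\tau'-\tau)^{k}\,d\tau'$. Parametrizing the geodesic by $\tau'=-\tfrac dc+it$, $t>0$ (so $d\tau'=i\,dt$), inserting the Fourier series $h_\mu(\tau')=\sum_{N(n)>0}C_\mu(N(n))e^{2\pi i\frac{N(n)}{4m}\tau'}$ and interchanging summation and integration -- legitimate because $\Phi$, hence each $h_\mu$, is cuspidal and decays exponentially along the ray -- one uses $e^{2\pi i\frac{N(n)}{4m}\tau'}=e^{-2\pi i\frac{dN(n)}{4mc}}e^{-2\pi\frac{N(n)}{4m}t}$, the binomial expansion $\bigl(-(\tau+\tfrac dc)+it\bigr)^{k}=\sum_{n=0}^{k}\binom{k}{n}\bigl(-(\tau+\tfrac dc)\bigr)^{k-n}(it)^{n}$, and the Mellin integral $\int_{0}^{\infty}t^{n}e^{-2\pi\frac{N(n)}{4m}t}\,dt=n!\,(2\pi N(n)/4m)^{-(n+1)}$; the factor $i$ from $d\tau'$, the factor $(it)^{n}$ and this Mellin integral combine to give $(-2\pi i)^{-(n+1)}$, while resumming over $N(n)$ turns the remaining series into $L(\Phi,\mu,\gamma,n+1)$ (absolutely convergent at $s=n+1$ because $k+2+\frac12>2$ forces polynomial growth of the Fourier coefficients of $\Phi$). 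Hence, up to the overall constant absorbed into $\mathrm{const}$, the coefficient of $(\tau+\tfrac dc)^{k-n}$ in the inner integral is $\bigl(k!/(k-n)!\bigr)(-1)^{k}(-2\pi i)^{-(n+1)}L(\Phi,\mu,\gamma,n+1)$, and applying the complex conjugation built into the normalization of $\tilde{\eta}$ -- which replaces $L$ by $\overline{L}$ and $(-2\pi i)^{-(n+1)}$ by $(2\pi i)^{-(n+1)}$, the leftover sign $(-1)^{k+1}$ being absorbed into $\mathrm{const}$ -- turns this coefficient into exactly $\dfrac{k!(-1)^{k+n}\overline{L(\Phi,\mu,\gamma,n+1)}}{(k-n)!(2\pi i)^{n+1}}$ for $0\le n\le k$. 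Multiplying by $\vartheta_{m,\mu}(\tau,z)=\sum_{r\equiv\mu\ (\m\ 2m)}q^{r^2/4m}\zeta^{r}$ and summing over $\mu$ then reproduces precisely $r(\Phi,(\gamma,X);\tau,z)$.

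It remains to record two routine points. First, the collection $\{r(\Phi,(\gamma,X);\tau,z)\}$ is a cocycle, i.e., it satisfies \eqref{cocycle} for the weight $-k+\frac12$: this is automatic since it is cohomologous to the cocycle representing $\tilde{\eta}(\Phi)$, and it may alternatively be checked directly from the behaviour of the partial $L$-functions under composition of group elements. Second, each $r(\Phi,(\gamma,X);\tau,z)$ lies in $\mc{P}^e_{m}$: it is a finite sum of polynomials in $\tau$ times the theta functions $\vartheta_{m,\mu}$, so condition (2) of Definition \ref{dfnofpme} is exactly the transformation law of $\vartheta_{m,\mu}$ under $z\mapsto z+\lambda\tau+\mu$, while the growth condition (1) follows from the elementary estimate $\sum_{r}e^{-\frac{\pi v}{2m}(r+2my/v)^{2}}\ll 1+v^{-1/2}$ multiplied by the polynomial factor in $\tau$ and by $e^{2\pi my^{2}/v}$.

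The step I expect to require the most care is the constant-chasing across the reduction and the Eichler--Shimura evaluation: one must simultaneously track the half-integral automorphy factor $(c\tau'+d)^{k-\frac12}$, the multiplier $\bar{\chi}(\gamma)$, the theta transformation matrix mixing the residues $\mu$, the rescaling $z'\mapsto z'/(c\tau'+d)$ with its Jacobian $(c\tau'+d)^{-2}$, and the conjugation in the definition of $\tilde{\eta}$, and verify that they conspire to leave exactly the sign, the power $(2\pi i)^{-(n+1)}$, and the factor $q^{r^2/4m}\zeta^{r}$ recorded in the statement; the accompanying analytic points -- the termwise integration and the absolute convergence of $L(\Phi,\mu,\gamma,s)$ at $s=n+1$ -- are routine given the cuspidality of $\Phi$.
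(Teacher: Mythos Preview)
Your approach coincides with the paper's: pass to the theta decomposition and evaluate the resulting vector-valued period polynomial by a Mellin integral. A few points in the execution need tightening, though.

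First, the map $\tilde{\eta}$ is defined (see the proof of Theorem~\ref{main1}) through the non-holomorphic integrand $(w-\bar{\tau})^k$ with an outer conjugation, namely $r_{\mu,\gamma}(\tau)=\bigl[\int_{-d/c}^{i\infty}h_\mu(w)(w-\bar{\tau})^k\,dw\bigr]^-$, not the holomorphic integral $\int_{-d/c}^{i\infty}h_\mu(\tau')(\tau'-\tau)^k\,d\tau'$ that you write. The two are related by $r_{\mu,\gamma}(\tau)=\overline{P_\mu(\bar{\tau})}$ where $P_\mu$ is your holomorphic polynomial, so in effect only the \emph{coefficients} get conjugated while the variable $\tau$ survives; this is the mechanism behind your ``conjugation step'', and it should be stated explicitly rather than invoked implicitly. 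Second, the binomial expansion contributes $(-1)^{k-n}$, not $(-1)^{k}$, to the coefficient of $(\tau+d/c)^{k-n}$; once this is corrected, conjugating the coefficients lands exactly on $(-1)^{k-n}=(-1)^{k+n}$ with no residual sign. Thus your ``$\mathrm{const}$'' is identically $1$ and the ``modulo a coboundary'' clause is superfluous: the formula in the theorem is the literal cocycle produced by the definition of $\tilde{\eta}$, not merely something cohomologous to it up to a scalar. This last point is not cosmetic, since scaling a cocycle by a nontrivial constant would in general change its cohomology class and the argument would not close.
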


\begin{rmk} It is well known that if $f(\tau)$ is a modular form of weight $k+2$ then its period functions  are polynomials of degree at most $k$ and their coefficients  can be written in terms of the critical values of $L$-functions associated with a given modular form $f(\tau)$. The periods of modular forms are rich source of arithmetic of modular forms (see \cite{KZ}).
\end{rmk}

To prove our main theorems  we extend the argument of Knopp in \cite{K} for modular forms to Jacobi forms. Note that a Jacobi form is defined not on a complex curve but a complex surface. So one of main features of our proofs is overcoming difficulties coming from the complexities of the structure of a complex surface by the theta decomposition theory, developed by Eichler and Zagier \cite{EZ}, Ziegler \cite{Z0}, which gives an isomorphism between Jacobi forms and vector-valued modular forms.

The remainder of this paper is organized as follows. In section \ref{section2}, we review the theory of
Jacobi forms. In section \ref{section3},
we introduce vector-valued modular forms and establish the Eichler cohomology theory for vector-valued modular
forms of real weights. More precisely, we describe a map from the space of vector-valued cusp forms of a
real weight to the cohomology group of $\Gamma$ with a certain module of vector-valued functions and
prove that this map is an isomorphism. In section \ref{section4}, we prove the main theorems:
Theorem \ref{main1}, \ref{main2} and  \ref{main3}. Finally, in section \ref{section5}, we give some remarks regarding the cohomology group defined in \cite{CL} by the comparision with our cohomology group.

\bigskip

\section{Jacobi forms} \label{section2}

 Eichler and Zagier laid the foundations for the theory of Jacobi forms in \cite{EZ} and Ziegler \cite{Z0} investigated higher dimensional cases in the spirit of Eichler and Zagier.
In this section,  we review basic notions of Jacobi forms (see \cite{EZ, Z0} for details).
First we fix some notations.
 Let $\Gamma \subset \SL(2,\ZZ)$ be a finitely generated Fuchsian group of the first kind, which has at least one parabolic class and $\Gamma^{(1,j)}=\Gamma\ltimes (\ZZ^{(j,1)})^2$ be a {\it{Jacobi group}} with associated composition law:
\[(\gamma_1,(\lambda_1,\mu_1))\cdot(\gamma_2,(\lambda_2,\mu_2))=(\gamma_1 \gamma_2, (\widetilde{\lambda_1}+\lambda_2,\widetilde{\mu_1}+\mu_2)),\]
where $(\widetilde{\lambda},\widetilde{\mu})=(\lambda,\mu)\cdot \gamma_2$.
Then $\Gamma^{(1,j)}$ acts on $\HH\times\CC^{(j,1)}$ as a group of automorphism. The action is given by:
\[(\gamma, (\lambda,\mu))\cdot(\tau,z)= \biggl(\gamma\tau,\frac{z+\lambda\tau+\mu}{c\tau+d}\biggr),\]
where $\gamma\tau = \frac{a\tau+b}{c\tau+d}$ for $\gamma = \sm a&b\\c&d\esm\in\Gamma$.
Let $k$ be a real number and $\chi$ be a multiplier system of weight $k$ on $\Gamma$, i.e., $\chi : \Gamma\to\CC$ satisfies
\begin{enumerate}
\item[(1)] $|\chi(\gamma)| = 1$ for all $\gamma\in\Gamma$,
\item[(2)] $\chi$ satisfies the consistency condition
\[\chi(\gamma_3)(c_3\tau+d_3)^{k}=\chi(\gamma_1)\chi(\gamma_2)(c_1\gamma_2\tau+d_1)^{k}(c_2\tau+d_2)^{k},\]
where $\gamma_3=\gamma_1\gamma_2$ and $\gamma_i =\sm *&*\\c_i&d_i\esm, i =1,2$ and $3$,
\item[(3)] $\chi$ satisfies the nontriviality condition
\[\chi(-I) = e^{\pi ik}.\]
\end{enumerate}

With these notations, we introduce the definition of a Jacobi form.

\begin{dfn}
A {\it{Jacobi form}} of weight $k$, index $\mc{M}$ and multiplier system $\chi$ on $\Gamma^{(1,j)}$  is a holomorphic mapping $\Phi(\tau,z)$ on $\HH\times\CC^{(j,1)}$ satisfying
\begin{enumerate}
\item[(1)] $(\Phi|_{k,\mc{M},\chi} \gamma)(\tau,z) =\Phi(\tau,z)$ for every $\gamma\in\Gamma$,
\item[(2)] $(\Phi|_{\mc{M}}X)(\tau,z) = \Phi(\tau,z)$ for every $X\in (\ZZ^{(j,1)})^2$,
\item[(3)] for each $\gamma=\sm a&b\\c&d\esm \in\SL(2,\ZZ)$, the function $(c\tau+d)^{-k}e^{2\pi i\tr(\mc{M}zz^t)\frac{-c}{c\tau+d}}\Phi((\gamma,0)\cdot(\tau,z))$ has the Fourier expansion of the form
\begin{equation} \label{Jacobifourier}
(c\tau+d)^{-k}e^{2\pi i\tr(\mc{M}zz^t)\frac{-c}{c\tau+d}}\Phi((\gamma,0)\cdot(\tau,z)) =
\sum_{l\in\ZZ\atop l+\kappa_\gamma\geq0}\sum_{r\in\ZZ^{(1,j)}}a(l,r)e^{2\pi i\tau(l+\kappa_{\gamma})/\lambda_\gamma}e^{2\pi i\tr(rz)},
\end{equation}
with a suitable $0\leq \kappa_\gamma<1,\ \lambda_\gamma\in\ZZ$ and $a(l,r)\neq0$ only if $4(l+\kappa_\gamma)-\lambda_\gamma r\mc{M}^{-1}r^t\geq0$.
\end{enumerate}
\end{dfn}

We denote by $J_{k,\mc{M},\chi}(\Gamma^{(1,j)})$ the vector space of all Jacobi forms of weight $k$, index $\mc{M}$ and multiplier system $\chi$ on $\Gamma^{(1,j)}$. If a Jacobi form satisfies the  condition $a(l,r)\neq 0$ only if $4(l+\kappa_\gamma)-\lambda_\gamma r\mc{M}^{-1}r^t>0$, then it is called a {\it{Jacobi cusp form}}. We denote by $S_{k,\mc{M},\chi}(\Gamma^{(1,j)})$ the vector space of all Jacobi cusp forms of weight $k$, index $\mc{M}$ and multiplier system $\chi$ on $\Gamma^{(1,j)}$.

Now we look into the theta series, which plays an important role in the proofs of our main theorems.
Let $S\in\ZZ^{(j,j)}$ be symmetric, positive definite and $a,b\in\QQ^{(j,1)}$. We consider the theta series
\[\theta_{S,a,b}(\tau,z) := \sum_{\lambda\in\ZZ^{(j,1)}}e^{\pi i\tr(S((\lambda+a)(\lambda+a)^t\tau+2(\lambda+a)(z+b)^t))}\]
with characteristic $(a,b)$ converging normally on $\HH\times\CC^{(j,1)}$. Then this theta series satisfies the following transformation properties.

\begin{lem}  \label{lem1} \cite[Section 5]{EZ}, \cite[Lemma 3.2]{Z0}
Let $\mathcal{N}$ be a complete system of representatives of the cosets \[(2\mc{M})^{-1}\ZZ^{(j,1)}/\ZZ^{(j,1)}.\]
For $a\in\mathcal{N}$ we have
\begin{enumerate}
\item[(1)] $\theta_{2\mc{M},a,0}\biggl(-\frac{1}{\tau}, \frac{z}{\tau}\biggr) = \det(2\mc{M})^{-\frac 12}\det\biggl(\frac \tau i\biggr)^{\frac j2}e^{2\pi i\tr(\mc{M}zz^t/\tau)}\sum_{b\in\mathcal{N}}e^{-2\pi i\tr(2\mc{M}ba^t)}\theta_{2\mc{M},b,0}(\tau,z)$,
\item[(2)] $\theta_{2\mc{M},a,0}(\tau+1,z) = e^{2\pi i\tr(\mc{M}aa^t)}\theta_{2\mc{M},a,0}(\tau,z)$.
\end{enumerate}
\end{lem}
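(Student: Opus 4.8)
The statement is classical (it is the transformation law of the vector of theta series under the generators $T=\sm 1&1\\0&1\esm$ and $S=\sm 0&-1\\1&0\esm$ of $\SL(2,\ZZ)$), so the plan is to verify the two formulas directly from the series definition, handling the translation and the inversion separately. The translation formula (2) is elementary: substituting $\tau+1$ for $\tau$ in $\theta_{2\mc{M},a,0}$ multiplies the $\lambda$-th term by $e^{\pi i\tr(2\mc{M}(\lambda+a)(\lambda+a)^t)}$, while leaving the $z$-term untouched. Expanding $(\lambda+a)(\lambda+a)^t=\lambda\lambda^t+\lambda a^t+a\lambda^t+aa^t$ and using that $\mc{M}\in\ZZ^{(j,j)}$ is symmetric and $\lambda\in\ZZ^{(j,1)}$, one gets $\tr(2\mc{M}\lambda\lambda^t)=2\lambda^t\mc{M}\lambda\in 2\ZZ$ and $\tr(2\mc{M}(\lambda a^t+a\lambda^t))=4a^t\mc{M}\lambda=2(2\mc{M}a)^t\lambda\in 2\ZZ$, the last membership holding because $a\in\mathcal{N}$ forces $2\mc{M}a\in\ZZ^{(j,1)}$. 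Hence the only surviving factor is $e^{\pi i\tr(2\mc{M}aa^t)}=e^{2\pi i\tr(\mc{M}aa^t)}$, which is independent of $\lambda$ and factors out of the sum, giving (2).

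For the inversion formula (1), I would apply Poisson summation. Writing $S=2\mc{M}$ and $h(w)=e^{\pi i(-\frac1\tau w^tSw+\frac2\tau z^tSw)}$, the $\lambda$-th term of $\theta_{2\mc{M},a,0}(-\frac1\tau,\frac z\tau)$ is $h(\lambda+a)$. Since $\tau\in\HH$ one computes $\mathrm{Re}\bigl(\frac i\tau\bigr)=v/|\tau|^2>0$, so that $P:=\frac i\tau S$ has positive definite real part, $h$ is a rapidly decaying Gaussian, and Poisson summation yields $\sum_{\lambda}h(\lambda+a)=\sum_{n\in\ZZ^{(j,1)}}e^{2\pi i\tr(na^t)}\hat h(n)$ with $\hat h(n)=\int_{\RR^{(j,1)}}h(w)e^{-2\pi i n^tw}\,dw$. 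Completing the square and invoking the multivariate Gaussian integral $\int e^{-\pi w^tPw+2\pi i q^tw}\,dw=\det(P)^{-1/2}e^{-\pi q^tP^{-1}q}$ (principal branch) with $q=\frac1\tau Sz-n$ gives, after the identity $-\pi q^tP^{-1}q=\pi i(\frac1\tau z^tSz-2n^tz+\tau n^tS^{-1}n)$,
\[\hat h(n)=\Bigl(\tfrac\tau i\Bigr)^{j/2}\det(S)^{-1/2}\,e^{\pi i(\frac1\tau z^tSz-2n^tz+\tau n^tS^{-1}n)}.\]
The prefactor is exactly $\det(\tfrac\tau i)^{j/2}\det(2\mc{M})^{-1/2}$ and the term $e^{\pi i\frac1\tau z^tSz}=e^{2\pi i\tr(\mc{M}zz^t/\tau)}$ reproduces the scalar factor in the statement.

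It remains to identify the dual sum $\sum_{n}e^{2\pi i\tr(na^t)}e^{\pi i(-2n^tz+\tau n^tS^{-1}n)}$ with the combination $\sum_{b\in\mathcal{N}}e^{-2\pi i\tr(2\mc{M}ba^t)}\theta_{2\mc{M},b,0}(\tau,z)$. Here I would use that $b\mapsto Sb=2\mc{M}b$ induces a bijection $\mathcal{N}=(2\mc{M})^{-1}\ZZ^{(j,1)}/\ZZ^{(j,1)}\to\ZZ^{(j,1)}/S\ZZ^{(j,1)}$, so every $n\in\ZZ^{(j,1)}$ is \emph{uniquely} $n=-S(b+\mu)$ with $b\in\mathcal{N}$, $\mu\in\ZZ^{(j,1)}$. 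Under this reindexing $n^tS^{-1}n=(b+\mu)^tS(b+\mu)$ and $-2n^tz=2(b+\mu)^tSz$ rebuild the theta exponent, while $Sa\in\ZZ^{(j,1)}$ makes the phase collapse to $e^{2\pi i\tr(na^t)}=e^{-2\pi i\tr(Sb a^t)}=e^{-2\pi i\tr(2\mc{M}ba^t)}$, independent of $\mu$; factoring it out and summing over $\mu$ restores $\theta_{2\mc{M},b,0}(\tau,z)$. The main obstacle is precisely this last bookkeeping step together with the Gaussian transform: one must track the correct branch of the square root producing $\det(\tfrac\tau i)^{j/2}$, and the sign choice $n=-S(b+\mu)$ (rather than $n=S(b+\mu)$) is exactly what converts the dual quadratic form $n^tS^{-1}n$ back into $(b+\mu)^tS(b+\mu)$ with the correct sign on the $z$-term and yields the minus sign in $e^{-2\pi i\tr(2\mc{M}ba^t)}$.
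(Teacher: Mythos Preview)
The paper does not supply its own proof of this lemma; it is quoted verbatim from \cite[Section 5]{EZ} and \cite[Lemma 3.2]{Z0}. Your argument is correct and is exactly the classical derivation one finds in those references: part (2) by reducing the exponent $\tr(2\mc{M}(\lambda+a)(\lambda+a)^t)$ modulo $2\ZZ$, and part (1) by Poisson summation, the complex Gaussian integral, and the reindexing $n=-2\mc{M}(b+\mu)$ that converts the dual lattice sum back into $\sum_{b\in\mc{N}}\theta_{2\mc{M},b,0}$.
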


We are in a position to explain the theta expansion. To do that, we need to introduce the following space of functions on $\CC^{(j,1)}$.
For $\tau_0\in\HH$, let $T_{\mc{M}}(\tau_0)$ denote the vector space of all holomorphic functions $g:\CC^{(j,1)}\to\CC$ satisfying
\[g(z+\lambda \tau_0+\mu) = e^{-2\pi i\tr(\mc{M}(\lambda \lambda^t\tau_0+2\lambda z^t))}g(z)\]
for every $\lambda, \mu\in\ZZ^{(j,1)}$. Writing this functional equation in terms of Fourier coefficients yields the following lemma.

\begin{lem} \label{lem2} \cite[Lemma 3.1]{Z0}
 The functions $\{\theta_{2\mc{M},a,0}(\tau_0,z)|\ a\in\mathcal{N}\}$ form a basis of $T_{\mc{M}}(\tau_0)$. Especially we have
 \[\dim_{\CC}T_{\mc{M}}(\tau_0) = \det(2\mc{M}).\]
\end{lem}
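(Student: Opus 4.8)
The plan is to prove this in three independent steps and then combine them: (i) each $\theta_{2\mc{M},a,0}(\tau_0,\cdot)$ with $a\in\mathcal{N}$ lies in $T_{\mc{M}}(\tau_0)$; (ii) the functions $\{\theta_{2\mc{M},a,0}(\tau_0,\cdot)\mid a\in\mathcal{N}\}$ are linearly independent over $\CC$; and (iii) $\dim_{\CC}T_{\mc{M}}(\tau_0)\leq\det(2\mc{M})$. Since $|\mathcal{N}|=[(2\mc{M})^{-1}\ZZ^{(j,1)}:\ZZ^{(j,1)}]$, which is finite and equal to $\det(2\mc{M})$ because $\mc{M}>0$, steps (i)--(iii) together force $\{\theta_{2\mc{M},a,0}(\tau_0,\cdot)\mid a\in\mathcal{N}\}$ to be a basis and the dimension to equal $\det(2\mc{M})$. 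As the sentence preceding the statement indicates, the whole argument is just the translation of the defining functional equation of $T_{\mc{M}}(\tau_0)$ into the language of Fourier coefficients, and it is contained in \cite{Z0}.

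For step (i) I would substitute $z\mapsto z+\nu\tau_0+\mu$, with $\nu,\mu\in\ZZ^{(j,1)}$, into the series defining $\theta_{2\mc{M},a,0}$ and complete the square. Using that $\mc{M}$ is symmetric, $\tr(\mc{M}(\lambda+a+\nu)(\lambda+a+\nu)^t)=\tr(\mc{M}(\lambda+a)(\lambda+a)^t)+2\,\tr(\mc{M}(\lambda+a)\nu^t)+\tr(\mc{M}\nu\nu^t)$, so after extracting the $\lambda$-independent factor $e^{-2\pi i\tr(\mc{M}(\nu\nu^t\tau_0+2\nu z^t))}$ and re-indexing the sum by $\lambda\mapsto\lambda-\nu$ one recovers $\theta_{2\mc{M},a,0}(\tau_0,z)$; this is exactly the transformation law defining $T_{\mc{M}}(\tau_0)$. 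The only point that needs care is that the translation by $\mu$ contributes a factor $e^{4\pi i\tr(\mc{M}(\lambda+a)\mu^t)}$, and this equals $1$ because $\mc{M}\lambda\in\ZZ^{(j,1)}$ and, crucially, $2\mc{M}a\in\ZZ^{(j,1)}$ (as $a\in(2\mc{M})^{-1}\ZZ^{(j,1)}$), so $\tr(2\mc{M}(\lambda+a)\mu^t)\in\ZZ$; this is the only place the hypothesis $a\in\mathcal{N}$ enters.

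For step (ii) I would write out the Fourier expansion in $z$, namely $\theta_{2\mc{M},a,0}(\tau_0,z)=\sum_{\lambda\in\ZZ^{(j,1)}}e^{2\pi i(\lambda+a)^t\mc{M}(\lambda+a)\tau_0}\,e^{2\pi i(2\mc{M}(\lambda+a))^tz}$, whose Fourier support is contained in the single residue class $2\mc{M}a+2\mc{M}\ZZ^{(j,1)}$ and whose coefficient at each frequency there is a nonzero exponential; for distinct $a,a'\in\mathcal{N}$ one has $a-a'\notin\ZZ^{(j,1)}$, hence $2\mc{M}a\not\equiv2\mc{M}a'\pmod{2\mc{M}\ZZ^{(j,1)}}$, so the supports are pairwise disjoint and nonempty and no nontrivial linear relation can hold. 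For step (iii), given $g\in T_{\mc{M}}(\tau_0)$, the case $\nu=0$ of the functional equation says $g$ is $\ZZ^{(j,1)}$-periodic, so $g(z)=\sum_{n\in\ZZ^{(j,1)}}c(n)e^{2\pi in^tz}$; substituting this into $g(z+\nu\tau_0)=e^{-2\pi i\tr(\mc{M}(\nu\nu^t\tau_0+2\nu z^t))}g(z)$ and comparing Fourier coefficients gives the recursion $c(n+2\mc{M}\nu)=e^{2\pi i(n^t\nu+\nu^t\mc{M}\nu)\tau_0}c(n)$ for all $n,\nu\in\ZZ^{(j,1)}$. Hence $g$ is determined by the finitely many coefficients $c(n)$ with $n$ ranging over a set of representatives of $\ZZ^{(j,1)}/2\mc{M}\ZZ^{(j,1)}$, which has $\det(2\mc{M})$ elements, and therefore $\dim_{\CC}T_{\mc{M}}(\tau_0)\leq\det(2\mc{M})$. (Since $\mc{M}>0$, the factor $e^{-2\pi\nu^t\mc{M}\nu\,\mathrm{Im}(\tau_0)}$ makes $|c(n+2\mc{M}\nu)|$ decay rapidly as $|\nu|\to\infty$, so one could even prescribe $c$ freely on representatives and obtain a convergent holomorphic element of $T_{\mc{M}}(\tau_0)$, giving the reverse inequality directly; but this is unnecessary once (i) and (ii) are in hand.)

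I do not expect a real obstacle here: the only slightly delicate bookkeeping is the trace/transpose manipulation with the symmetric matrix $\mc{M}$ in steps (i) and (iii) and the verification that the integral translation by $\mu$ acts trivially on $\theta_{2\mc{M},a,0}$, and the argument should follow \cite{Z0} essentially verbatim.
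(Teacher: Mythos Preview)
Your proposal is correct and follows precisely the approach indicated in the paper, which does not give its own proof but simply cites \cite[Lemma~3.1]{Z0} together with the one-line hint ``Writing this functional equation in terms of Fourier coefficients yields the following lemma.'' Your steps (i)--(iii) are exactly that translation into Fourier coefficients, carried out carefully and correctly.
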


Using Lemma \ref{lem1} and Lemma \ref{lem2}, we prove an isomorphism between $J_{k,\mc{M},\chi}(\Gamma^{(1,j)})$ and a certain space of vector-valued  modular forms from the following result.

\begin{thm}  \cite[Section 5]{EZ}, \cite[Section 3]{Z0} \label{decomposition} Let $\Phi(\tau,z)$ be holomorphic as a function of $z$ and satisfy
\begin{equation} \label{elliptic}
(\Phi|_{\mc{M}} X)(\tau,z) = \Phi(\tau,z)\ \text{for every}\ X\in (\ZZ^{(1,j)})^2.
\end{equation}
Then we have
\begin{equation} \label{thetaexpansion}
\Phi(\tau,z) = \sum_{a\in\mathcal{N}}f_a(\tau)\theta_{2\mc{M},a,0}(\tau,z)
\end{equation}
with uniquely determined holomorphic functions $f_a:\HH\to\CC$.
If $\Phi(\tau,z)$ also satisfies the transformation
\[(\Phi|_{k,\mc{M},\chi} \gamma)(\tau,z) =\Phi(\tau,z)\ \text{for every}\ \gamma\in\SL(2,\ZZ),\]
then we have for each $a\in\mc{N}$
\[f_a\biggl(-\frac1\tau\biggr) = \chi(\sm 0&-1\\1&0\esm)\det\biggl(\frac \tau i\biggr)^{-\frac j2}\tau^k\det(2\mc{M})^{-\frac 12}\sum_{b\in\mathcal{N}}e^{2\pi i\tr(2\mc{M}ab^t)}f_b(\tau)\]
and
\[f_a(\tau+1) = \chi(\sm 1&1\\0&1\esm)e^{-2\pi i\tr(\mc{M}aa^t)}f_a(\tau).\]
Furthermore, if $\Phi(\tau,z)$ is a Jacobi form in $J_{k,\mc{M},\chi}(\Gamma^{(1,j)})$, then
functions in $\{f_a|\ a\in\mathcal{N}\}$ necessarily must have the Fourier expansions of the form
\[f_a(\tau) =\sum_{l\geq0\atop \text{rational}} a(l)e^{2\pi il\tau}.\]
\end{thm}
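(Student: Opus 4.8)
The plan is to establish the three assertions in turn, using Lemma~\ref{lem1} and Lemma~\ref{lem2} as the essential tools. First I would prove the decomposition \eqref{thetaexpansion} together with holomorphy and uniqueness of the coefficients. Fix $\tau_0\in\HH$. Condition \eqref{elliptic} says precisely that the holomorphic function $z\mapsto\Phi(\tau_0,z)$ lies in the space $T_{\mc{M}}(\tau_0)$, so by Lemma~\ref{lem2} there are unique complex numbers $f_a(\tau_0)$, $a\in\mathcal{N}$, with $\Phi(\tau_0,z)=\sum_{a\in\mathcal{N}}f_a(\tau_0)\theta_{2\mc{M},a,0}(\tau_0,z)$; this defines $f_a$ on $\HH$, gives \eqref{thetaexpansion} by construction, and uniqueness is forced by Lemma~\ref{lem2}. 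To see that each $f_a$ is holomorphic, note that near any $\tau_0$ one may pick points $z_1,\dots,z_N\in\CC^{(j,1)}$ with $N=\det(2\mc{M})$ so that the matrix $\Theta(\tau):=\big(\theta_{2\mc{M},a,0}(\tau,z_i)\big)_{i,a}$ is invertible at $\tau_0$ --- such points exist, for otherwise the $\theta_{2\mc{M},a,0}(\tau_0,\cdot)$ would be linearly dependent, contradicting Lemma~\ref{lem2} --- so $\det\Theta(\tau)$ is holomorphic and nonvanishing in a neighbourhood, and there $\big(f_a(\tau)\big)_a=\Theta(\tau)^{-1}\big(\Phi(\tau,z_i)\big)_i$ has holomorphic entries by Cramer's rule; uniqueness then patches these local descriptions into functions $f_a$ holomorphic on all of $\HH$.

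Next, assuming $\Phi|_{k,\mc{M},\chi}\gamma=\Phi$ for all $\gamma\in\SL(2,\ZZ)$, I would derive the transformation laws of the $f_a$ from the two generators $T=\sm 1&1\\0&1\esm$ and $S=\sm 0&-1\\1&0\esm$. Unwinding the slash operator, the relation for $T$ reads $\Phi(\tau+1,z)=\chi(T)\Phi(\tau,z)$; substituting \eqref{thetaexpansion} on both sides, pulling the translation through each theta series by Lemma~\ref{lem1}(2), and comparing coefficients against the basis $\{\theta_{2\mc{M},a,0}(\tau,\cdot)\}$ (legitimate by Lemma~\ref{lem2}) yields $f_a(\tau+1)=\chi(T)e^{-2\pi i\tr(\mc{M}aa^t)}f_a(\tau)$. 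The relation for $S$ reads $\Phi(-1/\tau,z/\tau)=\chi(S)\tau^{k}e^{2\pi i\tr(\mc{M}zz^t)/\tau}\Phi(\tau,z)$; substituting \eqref{thetaexpansion} on the left, applying Lemma~\ref{lem1}(1) to each $\theta_{2\mc{M},a,0}(-1/\tau,z/\tau)$, substituting \eqref{thetaexpansion} on the right, and comparing coefficients of $\theta_{2\mc{M},b,0}(\tau,\cdot)$ expresses each $f_b(\tau)$ as a finite exponential sum in the $f_a(-1/\tau)$, which one inverts with the orthogonality relation $\sum_{b\in\mathcal{N}}e^{2\pi i\tr(2\mc{M}b(a-c)^t)}=\det(2\mc{M})\,\delta_{a,c}$ for the finite abelian group $(2\mc{M})^{-1}\ZZ^{(j,1)}/\ZZ^{(j,1)}$; this produces exactly the stated formula for $f_a(-1/\tau)$, including the factor $\chi(\sm 0&-1\\1&0\esm)\det(\tau/i)^{-j/2}\tau^{k}\det(2\mc{M})^{-1/2}$.

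Finally, for a Jacobi form $\Phi\in J_{k,\mc{M},\chi}(\Gamma^{(1,j)})$ I would apply condition~(3) of the definition with $\gamma=I$, namely $\Phi(\tau,z)=\sum_{l\in\ZZ\atop l+\kappa_I\ge0}\sum_{r\in\ZZ^{(1,j)}}a(l,r)e^{2\pi i\tau(l+\kappa_I)/\lambda_I}e^{2\pi i\tr(rz)}$ with $a(l,r)\ne0$ only if $4(l+\kappa_I)-\lambda_I\,r\mc{M}^{-1}r^t\ge0$. Expanding each $\theta_{2\mc{M},a,0}$ in its own $z$-Fourier series shows that the mode $e^{2\pi i\tr(rz)}$ with $r=2(\lambda+a)^t\mc{M}$, $\lambda\in\ZZ^{(j,1)}$, occurs only in $\theta_{2\mc{M},a,0}$ and carries the $\tau$-factor $e^{2\pi i(\lambda+a)^t\mc{M}(\lambda+a)\tau}$; matching the two expansions mode by mode (say at $\lambda=0$) gives $f_a(\tau)=\sum_{l}a(l,2a^t\mc{M})\,e^{2\pi i\tau[(l+\kappa_I)/\lambda_I-a^t\mc{M}a]}$, and since $r\mc{M}^{-1}r^t=4(\lambda+a)^t\mc{M}(\lambda+a)$ for this $r$ the cusp inequality becomes precisely $(l+\kappa_I)/\lambda_I-a^t\mc{M}a\ge0$, so every exponent occurring in $f_a$ is a nonnegative rational, i.e. $f_a(\tau)=\sum_{l\ge0\ \text{rational}}a(l)e^{2\pi il\tau}$. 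The only genuinely nontrivial step here is the first one --- upgrading the fibrewise-in-$z$ decomposition of Lemma~\ref{lem2} to one with holomorphic coefficients $f_a(\tau)$ --- together with the bookkeeping of normalizing constants in the finite-Fourier inversion of the $S$-step; the rest is substitution and coefficient comparison, legitimized throughout by the linear independence in Lemma~\ref{lem2}.
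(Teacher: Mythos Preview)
The paper does not supply its own proof of this theorem; it is stated as a cited result from \cite[Section~5]{EZ} and \cite[Section~3]{Z0}. Your argument is correct and is precisely the standard proof found in those references: the fibrewise application of Lemma~\ref{lem2} followed by the evaluation-matrix/Cramer's-rule argument for holomorphy of the $f_a$, the derivation of the $T$- and $S$-transformation laws from Lemma~\ref{lem1} together with finite Fourier inversion on $(2\mc{M})^{-1}\ZZ^{(j,1)}/\ZZ^{(j,1)}$, and the Fourier-mode matching for the cusp condition all match Eichler--Zagier and Ziegler.
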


The decomposition by theta functions as in (\ref{thetaexpansion}) is called the {\it{theta expansion}}.
Now we explain the isomorphism between Jacobi forms and vector-valued modular forms induced by the theta expansion more precisely.
First, we look into a special representation to define the space of vector-valued modular forms, which corresponds to $J_{k,\mc{M},\chi}(\Gamma^{(1,j)})$.
From now,
we use the notation $\tau = u+iv\in\HH$ and $z = x+iy\in\CC^{(j,1)}$.
 Let $\mbf{e}_a (a\in\mc{N})$ be the standard basis of $\CC^{|\mc{N}|}$. If $j$ is an even integer, then we define a representation from $\SL
(2,\ZZ)$ to $\GL(|\mc{N}|,\CC)$ by
\begin{equation} \label{representation1}
\rho(T)\mbf{e}_a = e^{-2\pi i\tr(\mc{M}aa^t)}\mbf{e}_a
\end{equation}
and
\begin{equation} \label{representation2}
\rho(S)\mbf{e}_a = \frac{i^{\frac j2}}{\sqrt{\det(2\mc{M})}}\sum_{b\in\mc{N}}e^{2\pi i\tr(2\mc{M}ab^t)}\mbf{e}_b,
\end{equation}
where $T = \sm 1&1\\0&1\esm$ and $S = \sm 0&-1\\1&0\esm$.
But if $j$ is an odd integer, then  $\rho$ is not a representation of $\SL(2,\ZZ)$ because of the ambiguities arising from the choice of square-root. To get a homomorphism one must replace $\SL(2,\ZZ)$ by a double cover. We write $\Mp(2,\RR)$ for the {\it{metaplectic group}}, i.e. the double cover of $\SL(2,\RR)$, realized by the two choices of holomorphic square roots of $\tau\mapsto c\tau+d$ for $\sm a&b\\c&d\esm\in\SL(2,\RR)$. Thus the elements of $\Mp(2,\RR)$ are pairs
\[(\gamma,\phi(\tau)),\]
where $\gamma = \sm a&b\\c&d\esm\in\SL(2,\RR)$, and $\phi$ denotes a holomorphic function on $\HH$ with
\[\phi(\tau)^2= c\tau+d.\]
The product of $(\gamma_1,\phi_1(\tau)), (\gamma_2, \phi_2(\tau))\in\Mp(2,\RR)$ is given by
\[(\gamma_1,\phi_1(\tau)) (\gamma_2, \phi_2(\tau)) = (\gamma_1\gamma_2, \phi_1(\gamma_2\tau)\phi_2(\tau)).\]
The map
\[\sm a&b\\c&d\esm  \mapsto \widetilde{\sm a&b\\c&d\esm} = (\sm a&b\\c&d\esm, \sqrt{c\tau+d})\]
defines a locally isomorphic embedding of $\SL(2,\RR)$ into $\Mp(2,\RR)$. Let $\Mp(2,\ZZ)$ be the inverse image of $\SL(2,\ZZ)$ under the covering map $\Mp(2,\RR)\to\SL(2,\RR)$. It is well known that $\Mp(2,\ZZ)$ is generated by $\widetilde{T}$ and $\widetilde{S}$. We define a representation $\widetilde{\rho}$ of $\Mp(2,\ZZ)$ by $\widetilde{\rho}(\widetilde{T}) = \rho(T)\ \text{and}\ \widetilde{\rho}(\widetilde{S})= \rho(S)$. Then $\tilde{\rho}$ is essentially a Weil representation of $\Mp_2(\ZZ)$ and one can check that this representation is unitary.
We take a multiplier system $\chi'$ of weight $j/2$, for example we can take a power of eta-multiplier system: $\chi'(\gamma) = \biggl(\frac{\eta(\gamma\tau)}{\eta(\tau)}\biggr)^{j}$ for $\gamma\in\SL(2,\ZZ)$, where $\eta(\tau) = e^{\frac{\pi i\tau}{12}}\prod_{n=1}^\infty (1-q^n)$ is the Dedekind eta function.
Then we define a map $\rho' : \SL(2,\ZZ) \to \GL(|\mc{N}|,\CC)$ by
\[\rho'(\gamma) = \chi'(\gamma)\widetilde{\rho}(\widetilde{\gamma})\]
for $\gamma\in\SL(2,\ZZ)$.

\begin{lem} \label{homomorphism} The map $\rho'$ gives a representation of $\SL(2,\ZZ)$ if $j$ is an odd integer.
\end{lem}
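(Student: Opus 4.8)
Since $\SL(2,\ZZ)$ is generated by $S$ and $T$ with defining relations $S^4 = I$ and $(ST)^3 = S^2$, and since $\rho'$ is already specified on generators by $\rho'(\gamma) = \chi'(\gamma)\widetilde{\rho}(\widetilde{\gamma})$, it suffices to check that the matrices $\rho'(S)$ and $\rho'(T)$ satisfy these same relations. The point is that both factors $\chi'$ and $\widetilde{\rho}\circ(\ \widetilde{\ }\ )$ are ``homomorphisms up to the same $2$-cocycle'': $\widetilde{\rho}$ is an honest representation of $\Mp(2,\ZZ)$, so $\gamma \mapsto \widetilde{\rho}(\widetilde{\gamma})$ fails to be multiplicative on $\SL(2,\ZZ)$ exactly by the sign cocycle $\sigma(\gamma_1,\gamma_2)$ measuring whether $\widetilde{\gamma_1}\,\widetilde{\gamma_2} = \widetilde{\gamma_1\gamma_2}$; and $\chi'$, being a multiplier system of weight $j/2$ with $j$ odd, fails to be multiplicative by exactly the same cocycle (this is the content of the consistency condition (2) in the definition of a multiplier system, specialized to half-integral weight $j/2$). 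Hence the two obstructions cancel in the product and $\rho'$ is multiplicative.

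**Concretely, I would proceed as follows.** First, record the cocycle identity for $\widetilde{\rho}$: for $\gamma_1,\gamma_2\in\SL(2,\ZZ)$ one has $\widetilde{\gamma_1}\,\widetilde{\gamma_2} = (\gamma_1\gamma_2,\ \phi_1(\gamma_2\tau)\phi_2(\tau))$ with $\phi_i(\tau)=\sqrt{c_i\tau+d_i}$, whereas $\widetilde{\gamma_1\gamma_2}=(\gamma_1\gamma_2,\sqrt{c_3\tau+d_3})$; the ratio $\phi_1(\gamma_2\tau)\phi_2(\tau)/\sqrt{c_3\tau+d_3}$ is a constant $\sigma(\gamma_1,\gamma_2)\in\{\pm1\}$, and applying $\widetilde{\rho}$ gives $\widetilde{\rho}(\widetilde{\gamma_1})\widetilde{\rho}(\widetilde{\gamma_2}) = \sigma(\gamma_1,\gamma_2)\,\widetilde{\rho}(\widetilde{\gamma_1\gamma_2})$. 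Second, recall that the consistency condition for $\chi'$ of weight $j/2$ reads $\chi'(\gamma_1)\chi'(\gamma_2)(c_1\gamma_2\tau+d_1)^{j/2}(c_2\tau+d_2)^{j/2} = \chi'(\gamma_1\gamma_2)(c_3\tau+d_3)^{j/2}$, which, since $(c_1\gamma_2\tau+d_1)^{1/2}(c_2\tau+d_2)^{1/2} = \sigma(\gamma_1,\gamma_2)(c_3\tau+d_3)^{1/2}$ with the principal branch, yields $\chi'(\gamma_1)\chi'(\gamma_2) = \sigma(\gamma_1,\gamma_2)^{j}\,\chi'(\gamma_1\gamma_2) = \sigma(\gamma_1,\gamma_2)\,\chi'(\gamma_1\gamma_2)$ because $j$ is odd and $\sigma=\pm1$. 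Third, multiply the two identities:
\[
\rho'(\gamma_1)\rho'(\gamma_2) = \chi'(\gamma_1)\chi'(\gamma_2)\,\widetilde{\rho}(\widetilde{\gamma_1})\widetilde{\rho}(\widetilde{\gamma_2}) = \sigma(\gamma_1,\gamma_2)^2\,\chi'(\gamma_1\gamma_2)\widetilde{\rho}(\widetilde{\gamma_1\gamma_2}) = \rho'(\gamma_1\gamma_2),
\]
since $\sigma^2=1$. Applying this to $(\gamma_1,\gamma_2)$ running through the relators then shows $\rho'(S)^4 = \rho'(I) = \mathrm{id}$ and $\rho'(ST)^3 = \rho'(S)^2$, so $\rho'$ is a well-defined homomorphism on $\SL(2,\ZZ)$.

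**The main obstacle** is bookkeeping the square-root branches: one must pin down that the cocycle $\sigma(\gamma_1,\gamma_2)$ arising from the metaplectic multiplication law is literally the same $\pm1$ that appears when one splits $(c_3\tau+d_3)^{1/2}$ as a product of $(c_1\gamma_2\tau+d_1)^{1/2}$ and $(c_2\tau+d_2)^{1/2}$ using principal branches — i.e., that the two notions of ``automorphy factor for weight $1/2$'' coincide. This is standard (it is exactly the reason $\Mp(2,\ZZ)$ is the right double cover), but it requires being careful about which holomorphic square root $\phi_i$ is selected in the definition of $\widetilde{\gamma_i}$ and matching it to the convention $(\cdot)^{1/2}$ used in the multiplier-system axiom. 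Once that compatibility is in hand, the computation above is purely formal. I would also note that $\widetilde{\rho}$ is well-defined on $\Mp(2,\ZZ)$ precisely because the Weil representation relations $(\rho(S))^4$ and $(\rho(S)\rho(T))^3 = (\rho(S))^2$ hold in $\Mp(2,\ZZ)$ — this is what the cited transformation formulas in Lemma~\ref{lem1} encode — so no independent verification of those matrix identities is needed here.
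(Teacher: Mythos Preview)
Your argument is essentially the same as the paper's: both prove $\rho'(\gamma_1)\rho'(\gamma_2)=\rho'(\gamma_1\gamma_2)$ directly by exhibiting the sign $\sigma(\gamma_1,\gamma_2)\in\{\pm1\}$ that measures the failure of $\gamma\mapsto\widetilde{\gamma}$ to be multiplicative, observing that the same sign governs the multiplier system $\chi'$ of weight $j/2$, and then cancelling. The one step you leave implicit, and which the paper carries out explicitly, is the verification that $\widetilde{\rho}((I,-1))=-\mathrm{id}$: from $\widetilde{\gamma_1}\,\widetilde{\gamma_2}=(I,\sigma)\cdot\widetilde{\gamma_1\gamma_2}$ you cannot pass to $\widetilde{\rho}(\widetilde{\gamma_1})\widetilde{\rho}(\widetilde{\gamma_2})=\sigma\,\widetilde{\rho}(\widetilde{\gamma_1\gamma_2})$ without knowing how $\widetilde{\rho}$ acts on the nontrivial central element of $\Mp(2,\ZZ)$. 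The paper computes this from the Weil-representation formula $\widetilde{\rho}(Z)\mbf{e}_a=i^j\mbf{e}_{-a}$ with $Z=\bigl(\sm -1&0\\0&-1\esm,i\bigr)$, giving $\widetilde{\rho}((I,-1))=\widetilde{\rho}(Z^2)=(-1)^j\,\mathrm{id}=-\mathrm{id}$ for $j$ odd; this is in fact a second place, beyond your reduction $\sigma^j=\sigma$, where the parity of $j$ enters. Once you insert that computation, your proof and the paper's coincide. (Your opening framing via the relations $S^4=I$ and $(ST)^3=S^2$ is superfluous, since you then establish multiplicativity for arbitrary $\gamma_1,\gamma_2$ anyway.)
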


\begin{proof} [\bf Proof of Lemma \ref{homomorphism}]
We have to show that $\rho'$ is a homomorphism. For $\gamma_1 = \sm a_1&b_1\\c_1&d_1\esm, \gamma_2=\sm a_2&b_2\\c_2&d_2\esm \in\SL(2,\ZZ)$, we see that by definition
\begin{eqnarray*}
 \rho'(\gamma_1)\rho'(\gamma_2) &=& \chi'(\gamma_1)\widetilde{\rho}(\widetilde{\gamma_1})\chi'(\gamma_2)\widetilde{\rho}(\widetilde{\gamma_2})\\
 &=& \chi'(\gamma_1)\chi'(\gamma_2)\widetilde{\rho}((\gamma_1,\sqrt{c_1\tau+d_1}))\widetilde{\rho}((\gamma_2,\sqrt{c_2\tau+d_2}))\\
&=& \chi'(\gamma_1)\chi'(\gamma_2)\widetilde{\rho}((\gamma_1\gamma_2, \sqrt{c_1\gamma_2\tau+d_1}\sqrt{c_2\tau+d_2})).
\end{eqnarray*}
Since $\chi'$ is a multiplier system of weight $j/2$ and $j$ is an odd integer, $\chi'$ is also a multiplier system of weight $1/2$. So, if we let $\gamma_3 = \gamma_1\gamma_2 = \sm a_3&b_3\\ c_3&d_3\esm$, then we have
\[\chi'(\gamma_3)\sqrt{c_3\tau+d_3} = \chi'(\gamma_1)\chi'(\gamma_2)\sqrt{c_1\gamma_2\tau+d_1}\sqrt{c_2\tau+d_2}.\]
Note that
\[\biggl(\frac{\chi'(\gamma_1)\chi'(\gamma_2)}{\chi'(\gamma_3)}\biggr)^2 = \frac{(c_3\tau+d_3)}{(c_1\gamma_2\tau+d_1)(c_2\tau+d_2)} = 1.\]
Therefore, one can see that $\biggl(I, \frac{\chi'(\gamma_1)\chi'(\gamma_2)}{\chi'(\gamma_3)}\biggr)\in \Mp_2(\ZZ)$. By the law of composition in $\Mp_2(\ZZ)$, we see that
\begin{eqnarray*}
\tilde{\rho}((\gamma_3,\sqrt{c_3\tau+d_3}))\tilde{\rho}\biggl(\biggl(I, \frac{\chi'(\gamma_1)\chi'(\gamma_2)}{\chi'(\gamma_3)}\biggr)\biggr) &=& \tilde{\rho}\biggl(\biggl(\gamma_3, \frac{\chi'(\gamma_3)\sqrt{c_3\tau+d_3}}{\chi'(\gamma_1)\chi'(\gamma_2)}\biggr)\biggr)\\
&=& \tilde{\rho}((\gamma_1\gamma_2,\sqrt{c_1\gamma_2\tau+d_1}\sqrt{c_2\tau+d_2})).
\end{eqnarray*}
So we can check that
\[\rho'(\gamma_1)\rho'(\gamma_2) = \chi'(\gamma_1)\chi'(\gamma_2)\tilde{\rho}((\gamma_3,\sqrt{c_3\tau+d_3}))\tilde{\rho}\biggl(\biggl(I, \frac{\chi'(\gamma_1)\chi'(\gamma_2)}{\chi'(\gamma_3)}\biggr)\biggr).\]
Therefore, it suffices to show that $\chi'(\gamma_1)\chi'(\gamma_2)\tilde{\rho}\biggl(\biggl(I, \frac{\chi'(\gamma_1)\chi'(\gamma_2)}{\chi'(\gamma_3)}\biggr)\biggr) = \chi'(\gamma_3)$.

One has the relations $S^2 = (ST)^3 = Z$, where $Z = \biggl(\sm -1&0\\0&-1\esm, i\biggr)$ is the standard generator of the center of $\Mp_2(\ZZ)$. It is well known that
\[\tilde{\rho}(Z)\mbf{e}_{a} = i^j\mbf{e}_{-a}.\]
From this, we see that
\[\tilde{\rho}((I, -1))\mbf{e}_{a} = \tilde{\rho}(Z^2)\mbf{e}_{a} = (-1)\mbf{e}_{a}.\]
Since $\frac{\chi'(\gamma_1)\chi'(\gamma_2)}{\chi'(\gamma_3)} = \pm1$, it follows that
\[\tilde{\rho}\biggl(\biggl(I, \frac{\chi'(\gamma_1)\chi'(\gamma_2)}{\chi'(\gamma_3)}\biggr)\biggr) = \frac{\chi'(\gamma_3)}{\chi'(\gamma_1)\chi'(\gamma_2)}.\]
Hence, we obtain that
\begin{eqnarray*}
\rho'(\gamma_1)\rho'(\gamma_2) &=& \chi'(\gamma_3)\tilde{\rho}((\gamma_3,\sqrt{c_3\tau+d_3})) = \rho'(\gamma_3) = \rho'(\gamma_1\gamma_2).
\end{eqnarray*}
This completes the proof.
\end{proof}

\medskip

In conclusion, we can define a representation $\rho''$ of $\SL(2,\ZZ)$ induced from the theta expansion as follows
\begin{equation} \label{doublerho}
\rho'' =
\begin{cases}
\rho & \text{if $j$ is an even integer,}\\
\rho' & \text{if $j$ is an odd integer.}
\end{cases}
\end{equation}

Using this we can define the space of vector-valued modular forms associated with $J_{k,\mc{M},\chi}(\Gamma^{(1,j)})$ by the theta expansion.
Let $\Phi(\tau,z)$ be a Jacobi form in $J_{k,\mc{M},\chi}(\Gamma^{(1,j)})$. By Theorem \ref{decomposition}, we have the theta expansion
\[\Phi(\tau,z) = \sum_{a\in\mc{N}}f_a(\tau)\theta_{2\mc{M},a,0}(\tau,z).\]
Then a vector-valued function $\sum_{a\in\mc{N}} f_a(\tau)\mbf{e}_a$ is a vector-valued modular form in  $M_{k-\frac j2, \chi'', \rho''}(\Gamma)$ (for the precise definition of $M_{k-\frac j2, \chi'', \rho''}(\Gamma)$ see section \ref{section3.1}), where
\begin{equation} \label{doublechi}
\chi'' =
\begin{cases}
\chi & \text{if $j$ is an even integer},\\
\chi\overline{\chi'} & \text{if $j$ is an odd integer}.
\end{cases}
\end{equation}
One can see that $\chi''(\gamma)\rho''(\gamma),\ \gamma\in\Gamma,$ is determined independently of the choice of $\chi'$ and hence $M_{k-\frac j2, \chi'', \rho''}(\Gamma)$ is determined uniquely. The important result of the theta expansion is that the map from $J_{k,\mc{M},\chi}(\Gamma^{(1,j)})$ to $M_{k-\frac j2, \chi'',\rho''}(\Gamma)$ given by $\Phi(\tau,z) \mapsto \sum_{a\in\mc{N}}f_a(\tau)\mbf{e}_a$ is actually an isomorphism.

\begin{thm}  \cite[Theorem 5.1]{EZ}, \cite[Theorem 3.3]{Z0} \label{isomorphism}
The theta expansion  gives an isomorphism between $J_{k,\mc{M},\chi}(\Gamma^{(1,j)})$ and $M_{k-\frac j2, \chi'', \rho''}(\Gamma)$.
\end{thm}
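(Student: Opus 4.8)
The plan is to prove that the theta-expansion map $\iota\colon\Phi\mapsto\sum_{a\in\mc{N}}f_a\,\mbf{e}_a$, with the $f_a$ the holomorphic functions furnished by Theorem \ref{decomposition}, is a $\CC$-linear isomorphism from $J_{k,\mc{M},\chi}(\Gamma^{(1,j)})$ onto $M_{k-\frac j2,\chi'',\rho''}(\Gamma)$; that $\iota$ lands in the target space was already indicated before the statement, but I would re-derive it in the course of the argument. Linearity is evident, and injectivity is immediate from the uniqueness of the theta coefficients in Theorem \ref{decomposition}. The engine is the transformation behavior under $\SL(2,\ZZ)$ of the vector $\Theta:=(\theta_{2\mc{M},a,0})_{a\in\mc{N}}$: Lemma \ref{lem1} records this behavior on the generators $T$ and $S$ in terms of the matrices \eqref{representation1}--\eqref{representation2}, and since $T,S$ generate $\SL(2,\ZZ)$ (equivalently $\widetilde{T},\widetilde{S}$ generate $\Mp(2,\ZZ)$), since $\rho''$ is a genuine representation (Lemma \ref{homomorphism} supplying this when $j$ is odd), and since $\Theta$ is an honest function of its arguments, the transformation law for an arbitrary $\gamma\in\SL(2,\ZZ)$ is forced: $\Theta$ behaves like a vector-valued Jacobi form of weight $\frac j2$ whose factor of automorphy is exactly compensated, inside the slash operator \eqref{slashoperator}, by the factors $(c\tau+d)^{-k}$ and $e^{-2\pi i\frac{c}{c\tau+d}\tr(\mc{M}zz^t)}$. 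Substituting the theta expansion $\Phi=\sum_af_a\theta_{2\mc{M},a,0}$ into \eqref{slashoperator} and invoking the linear independence, for each fixed $\tau_0$, of $\{\theta_{2\mc{M},a,0}(\tau_0,\cdot)\}_{a\in\mc{N}}$ (Lemma \ref{lem2}) then yields the dictionary: for every $\gamma\in\Gamma$, one has $\Phi|_{k,\mc{M},\chi}\gamma=\Phi$ if and only if $F:=\iota(\Phi)$ is invariant under the weight-$(k-\frac j2)$ vector-valued slash attached to $(\chi'',\rho'')$. Along the way one also checks that $\chi''(\gamma)\rho''(\gamma)$ is independent of the auxiliary multiplier $\chi'$, so that $M_{k-\frac j2,\chi'',\rho''}(\Gamma)$ is well defined.

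With the modular transformation handled, two things remain. First, condition (2) in the definition of a Jacobi form — the elliptic transformation under $(\ZZ^{(j,1)})^2$ — holds automatically for any finite sum $\sum_af_a\theta_{2\mc{M},a,0}$, because each $\theta_{2\mc{M},a,0}$ carries the correct characteristic; on the vector-valued side there is no elliptic requirement, so the two descriptions match. Second, the growth/Fourier condition (3) must correspond under $\iota$ to holomorphy of $F$ at every cusp of $\Gamma$. For $\Phi\in J_{k,\mc{M},\chi}(\Gamma^{(1,j)})$ this is the content of the last clause of Theorem \ref{decomposition}, applied not only at $\infty$ but at each cusp $\mathfrak{a}=\sigma\infty$ (with $\sigma\in\SL(2,\ZZ)$) after replacing $\Phi$ by $\Phi|_{k,\mc{M},\chi}\sigma$ and using the transformation law of $\Theta$ under $\sigma$. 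Conversely, given $F=\sum_af_a\mbf{e}_a\in M_{k-\frac j2,\chi'',\rho''}(\Gamma)$, set $\Phi:=\sum_af_a\theta_{2\mc{M},a,0}$, which is holomorphic on $\HH\times\CC^{(j,1)}$ by normal convergence of the theta series; conditions (1) and (2) hold by the dictionary and the previous remark; and expanding each $\theta_{2\mc{M},a,0}$ — whose $\tau$-exponents are $\tr(\mc{M}(\lambda+a)(\lambda+a)^t)$ and whose $z$-exponents are $2\mc{M}(\lambda+a)$ for $\lambda\in\ZZ^{(j,1)}$ — against the Fourier series of $f_a$ (at $\infty$, and at a general cusp after slashing by $\sigma$) and completing the square in the quadratic form $\mc{M}$ produces an expansion of the shape \eqref{Jacobifourier} whose support automatically obeys $4(l+\kappa_\gamma)-\lambda_\gamma r\mc{M}^{-1}r^t\ge 0$. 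Hence $\Phi\in J_{k,\mc{M},\chi}(\Gamma^{(1,j)})$ with $\iota(\Phi)=F$, so $\iota$ is surjective, completing the proof.

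I expect the genuine work to lie in two places. The first is the metaplectic bookkeeping when $j$ is odd: one must check that the square-root branch implicit in Lemma \ref{lem1} is the one incorporated into $\widetilde{\rho}$, and that the correction $\chi''=\chi\overline{\chi'}$, $\rho''=\rho'=\chi'\widetilde{\rho}$ really does produce the factor of automorphy $\chi''(\gamma)(c\tau+d)^{k-\frac j2}\rho''(\gamma)$ with no leftover dependence on $\chi'$; this rests on Lemma \ref{homomorphism} and the center relation $S^2=(ST)^3=Z$, and it is the step most vulnerable to sign errors. The second is the equivalence of the two cusp conditions: matching the inequality $4(l+\kappa_\gamma)-\lambda_\gamma r\mc{M}^{-1}r^t\ge 0$ with the non-negativity of the $\tau$-exponents of the $f_a$ is the familiar completing-the-square computation of Eichler and Zagier and of Ziegler, but it must be carried out here in the full generality of an arbitrary Fuchsian group $\Gamma$ of the first kind, a real weight $k$, and an arbitrary multiplier system $\chi$, and it is exactly what ties the ``rational, $\ge 0$'' exponents of Theorem \ref{decomposition} to holomorphy at the cusps.
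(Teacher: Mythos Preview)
The paper does not supply its own proof of this theorem: it is stated with citations to \cite[Theorem 5.1]{EZ} and \cite[Theorem 3.3]{Z0} and used as a black box, so there is no in-paper argument to compare against. Your sketch is the standard Eichler--Zagier/Ziegler argument and is correct in outline; the only point worth flagging is that $\Gamma$ here is an arbitrary $H$-group inside $\SL(2,\ZZ)$, not $\SL(2,\ZZ)$ itself, but you handle this properly by first establishing the theta transformation law on all of $\SL(2,\ZZ)$ (via the generators $T,S$) and then restricting to $\gamma\in\Gamma$, and likewise treating the cusps via $\sigma\in\SL(2,\ZZ)$.
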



\medskip

\bigskip

\section{Vector-valued Modular Forms} \label{section3}

In this section, we prove the Eichler cohomology theorem for vector-valued modular forms.
To do that, we construct a holomorphic vector-valued Eichler integral for a given parabolic cocycle such that its periods are the same as the given parabolic cocycle using a vector-valued generalized Poincar\'e series. Our proof is based on \cite{G, KM2}.

\subsection{Vector-valued modular forms} \label{section3.1}
We begin by introducing the definition of the vector-valued modular forms.
 Let $\Gamma\subseteq\SL(2,\ZZ)$ be a $H$-$\it{group}$, i.e., a finitely generated Fuchsian group of the first kind which has at least one parabolic class. Let $k\in\RR$ and $\chi$ a (unitary) multiplier system of weight $k$ on $\Gamma$.
Let $p$ be a positive integer and $\rho:\Gamma\to \GL(p,\CC)$ a $p$-dimensional unitary complex representation.  We denote the standard basis elements of the vector space $\CC^p$ by $\mbf{e}_j$ for $1\leq j\leq p$. With these setups, the definition of the vector-valued modular forms are given as follows.

\begin{dfn} \label{dfnofvvm} A vector-valued weakly holomorphic modular form of weight $k$, multiplier system $\chi$ and type $\rho$ on $\Gamma$ is a sum $f(\tau) = \sum_{j=1}^p f_j(\tau)\mbf{e}_j$ of functions holomorphic in  $\HH$ satisfying the following conditions:
\begin{enumerate}
\item for all $\gamma = \sm a&b\\c&d\esm\in\Gamma$, we have
$(f|_{k,\chi,\rho} \gamma)(\tau) = f(\tau)$,

\item for each $\gamma=\sm a&b\\c&d\esm\in\SL(2,\ZZ)$,
the function $(c\tau+d)^{-k}f(\gamma\tau)$ has the Fourier expansion of the form
\[(c\tau+d)^{-k}f(\gamma\tau) = \sum_{j=1}^p\sum_{n\gg-\infty} a_{j,\gamma}(n)e^{2\pi i(n+\kappa_{j,\gamma})\tau/\lambda_{\gamma}}\mbf{e}_j,\]
where $\kappa_{j,\gamma}$ (resp. $\lambda_{\gamma}$) is a constant which depends on $j$ and $\gamma$ (resp. $\gamma$).
\end{enumerate}
\end{dfn}

Here, the slash operator $|_{k,\chi,\rho} \gamma$ is defined by
\[(f|_{k,\chi,\rho}\gamma)(\tau) = \chi(\gamma)^{-1}(c\tau+d)^{-k}\rho^{-1}(\gamma)f(\gamma\tau),\]
for $\gamma = \sm a&b\\c&d\esm \in \Gamma$, where $\gamma\tau = \frac{a\tau+b}{c\tau+d}$.
The space of all vector-valued weakly holomorphic modular forms $f(\tau)$ of weight $k$, multiplier system $\chi$ and type $\rho$ on $\Gamma$ is denoted by $M^!_{k,\chi,\rho}(\Gamma)$. There are subspaces $M_{k,\chi,\rho}(\Gamma)$ and $S_{k,\chi,\rho}(\Gamma)$ of {\it{vector-valued\ holomorphic\ modular\ forms}} and {\it{vector-valued cusp forms}}, respectively, for which we require that each $a_{j,\gamma}(n) = 0$ when $n+\kappa_{j,\gamma}$ is negative, respectively, non-positive.

\medskip

\subsection{The Eichler cohomology for vector-valued modular forms} \label{section3.2}
 In this subsection, we define the Eichler cohomology group for a vector-valued modular form of real weight. To define the Eichler cohomology group, first we introduce the coefficient module and the vector-valued Eichler integrals.
 Let $\mc{P}$ be the vector space of vector-valued functions $G(\tau) = \sum_{j=1}^p G_j(\tau)\mbf{e}_j$ holomorphic in $\HH$ which satisfy the growth condition
 \begin{equation} \label{growthconditionforp}
|G_j(\tau)| < K(|\tau|^\rho + v^{-\sigma}),\ v= \Im\tau>0,\ 1\leq j\leq p,
\end{equation}
for some positive constants $K,\rho$ and $\sigma$. Since the weight here is not necessarily in $\ZZ$, polynomials of fixed degree cannot serve as the underlying space of functions in the definition of the cohomology groups in our case. Instead, we employ as the underlying space the collection $\mc{P}$. This space was introduced in \cite{K} in the context of the Eichler cohomology theory for scalar-valued (i.e. the usual) modular forms of arbitrary real weights. It is worth mentioning that the space $\mc{P}$ is preserved under the slash operator. With this underlying space, we define the vector-valued Eichler integrals as follows.

\begin{dfn} Let $\rho$ be a $p$-dimensional representation $\rho: \Gamma\to\GL(p,\CC)$, $k$ an arbitrary real number and $\chi$ a multiplier system for $\Gamma$ of weight $-k$.
A vector-valued Eichler integral of weight $-k$, multiplier system $\chi$ and type $\rho$ on $\Gamma$ is a vector-valued function $F(\tau)$ on $\HH$ satisfying
\[(F|_{-k,\chi,\rho}\gamma)(\tau) - F(\tau)\in \mc{P}\]
for all $\gamma\in\Gamma$.
\end{dfn}

If we let $p_\gamma(\tau) = (F|_{-k,\chi,\rho}\gamma)(\tau) - F(\tau)$, then the vector-valued functions $p_\gamma(\tau)$ are called {\it{period functions}} of $F(\tau)$.
Then it turns out that $\{p_\gamma|\ \gamma\in\Gamma\}$ satisfies the following cocycle condition:
For $\gamma_1,\gamma_2\in\Gamma$,
\begin{equation} \label{cocyclevvm}
p_{\gamma_1\gamma_2}(\tau) = p_{\gamma_2}(\tau) + (p_{\gamma_1}|_{-k,\chi,\rho}\gamma_2)(\tau).
\end{equation}
A collection $\{p_\gamma |\ \gamma\in\Gamma\}$ of elements of $\mc{P}$ satisfying (\ref{cocyclevvm}) is called a {\it{cocycle}} and a {\it{coboudary}} is a collection $\{p_\gamma |\ \gamma\in\Gamma\}$ such that
\[p_\gamma(\tau) = (p|_{-k,\chi,\rho}\gamma)(\tau) - p(\tau)\]
for $\gamma\in\Gamma$ with $p(\tau)$ a fixed element of $\mc{P}$. We define the {\it{cohomology group}} $H^1_{-k,\chi,\rho}(\Gamma,\mc{P})$ as the quotient of the cocycles by the coboundaries. A {\it{parabolic cocycle}} $\{p_\gamma|\ \gamma\in\Gamma\}$ is a collection of elements of $\mc{P}$ satisfying (\ref{cocyclevvm}), in which for every parabolic class $\mc{B}$ in $\Gamma$ there exists a fixed element $Q_B(\tau)\in\mc{P}$ such that
\[p_B(\tau) = (Q_B|_{-k,\chi,\rho}B)(\tau) - Q_B(\tau),\]
for all $B\in\mc{B}$. Note that coboundaries are parabolic cocycles. The {\it{parabolic cohomology group}} $\tilde{H}^1_{-k,\chi,\rho}(\Gamma,\mc{P})$ is defined as the vector space obtained by forming the quotient of the parabolic cocycles by the coboundaries.

\subsection{Vector-valued generalized Poincar\'e series} \label{section3.3}
A vector-valued generalized Poincar\'e series gives an example of the vector-valued Eichler integrals and it is also used in the proof of the Eichler cohomology theorem for vector-valued modular forms.
A vector-valued generalized Poincar\'e series was first defined by Lehner \cite{L3}.
 We recall the definition of a vector-valued generalized Poincar\'e series and investigate its convergence.
 Let $\{g_\gamma|\ \gamma\in\Gamma\}$ be a parabolic cocycle of elements of $\mc{P}$ of weight $-k\in\RR$ with $\chi$ a multiplier system in $\Gamma$ of weight $-k$ and $\rho$ a  representation. Assume also that $g_{Q}(\tau) = 0$, where
\begin{equation} \label{lambda}
Q = \sm 1&\lambda\\ 0&1\esm\in\Gamma,\ \lambda>0,
\end{equation}
 is a generator of $\Gamma_\infty$. We define a {\it{vector-valued generalized Poincar\'e series}} as
 \begin{equation} \label{genpoin}
\Phi(\tau;r) = \sum_{j=1}^p \sum_{V= \sm a&b\\c&d\esm\in\mc{L}}\frac{(g_V)_j(\tau)}{(c\tau+d)^r}\mbf{e}_j,
\end{equation}
 where $r$ is a large positive even integer and $\mc{L}$ is any set in $\Gamma$ containing all transformations with different lower rows. Now we note that if $V$ and $V^*$ have the same lower row, then we can write $V = Q^lV^*$ for some $l\in\ZZ$ and therefore we have
\begin{eqnarray*}
g_V(\tau) &=& (g_{Q^lV^*})(\tau)\\
&=& (g_{Q^l}|_{-k,\chi,\rho}V^*)(\tau) + g_{V^*}(\tau)\\
&=& (g_{V^*})(\tau),
\end{eqnarray*}
so that $\Phi_j(\tau;r)$ does not depend on the choice of coset representatives. Now based on the arguments in \cite{G,K} we prove the convergence of $\Phi(\tau;r)$ by using the following lemmas.

\begin{lem} \cite[Lemma 4]{K} \label{lemforpoin}
For real numbers $c,d$ and $\tau = u+iv\in\HH$, we have
\[\frac{v^2}{1+4|\tau|^2}(c^2+d^2) \leq |c\tau+d|^2 \leq 2(|\tau|^2 + v^{-2})(c^2+d^2).\]
\end{lem}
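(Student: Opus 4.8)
The plan is to reduce the whole statement to elementary inequalities for real numbers, by writing $|c\tau+d|^2 = (cu+d)^2 + c^2v^2$ and then estimating the two summands after decomposing $d$ and $cu$ appropriately. Throughout one uses only the trivial bounds $u^2 \le |\tau|^2$ and $v^2 \le |\tau|^2$, the AM--GM inequality $v^2 + v^{-2} \ge 2$, and $(A+B)^2 \le 2A^2 + 2B^2$. I expect no genuine obstacle here; the only care needed is to match the particular shapes $1 + 4|\tau|^2$ and $|\tau|^2 + v^{-2}$ that appear on the two sides, and the case $c = d = 0$ is trivial since then both sides vanish.

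For the upper bound, first I would observe that
\[
|c\tau+d|^2 = (cu+d)^2 + c^2v^2 \le 2c^2u^2 + 2d^2 + c^2v^2 \le 2c^2|\tau|^2 + 2d^2,
\]
using $(cu+d)^2 \le 2c^2u^2 + 2d^2$ for the first step and $2c^2u^2 + c^2v^2 \le 2c^2(u^2+v^2)$ for the second. Since $v^2 + v^{-2} \ge 2$ gives $|\tau|^2 + v^{-2} = u^2 + v^2 + v^{-2} \ge 2 \ge 1$, we get both $2c^2|\tau|^2 \le 2c^2(|\tau|^2 + v^{-2})$ and $2d^2 \le 2d^2(|\tau|^2 + v^{-2})$, whence $|c\tau+d|^2 \le 2(c^2+d^2)(|\tau|^2 + v^{-2})$, which is the claimed right-hand inequality.

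For the lower bound, the one point requiring a little thought is that $(cu+d)^2$ by itself does not control $d^2$; to get around this I would write $d = (cu+d) - cu$, so that $d^2 \le 2(cu+d)^2 + 2c^2u^2 \le 2(cu+d)^2 + 2c^2|\tau|^2$, and hence $c^2 + d^2 \le 2(cu+d)^2 + c^2(1 + 2|\tau|^2)$. Multiplying by $\tfrac{v^2}{1+4|\tau|^2} > 0$ gives
\[
\frac{v^2}{1+4|\tau|^2}(c^2+d^2) \le \frac{2v^2}{1+4|\tau|^2}(cu+d)^2 + \frac{v^2(1+2|\tau|^2)}{1+4|\tau|^2}\,c^2 .
\]
Finally $\tfrac{2v^2}{1+4|\tau|^2} \le \tfrac{2|\tau|^2}{1+4|\tau|^2} \le 1$ and $\tfrac{v^2(1+2|\tau|^2)}{1+4|\tau|^2} \le v^2$, so the right-hand side is at most $(cu+d)^2 + c^2v^2 = |c\tau+d|^2$, completing the argument. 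The whole lemma is thus a matter of careful bookkeeping; the ``hard part'', such as it is, is just the decomposition $d = (cu+d) - cu$ in the lower estimate and arranging the denominators so that the bounds close.
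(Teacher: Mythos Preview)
Your argument is correct in all details; both the upper and lower estimates go through exactly as you describe. Note, however, that the paper does not give its own proof of this lemma: it is simply quoted as \cite[Lemma~4]{K} with no argument supplied, so there is nothing to compare your approach against within the paper itself. Your write-up is a clean, self-contained verification of the cited inequality, and the decomposition $d=(cu+d)-cu$ you single out is indeed the only step that is not completely mechanical.
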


From now, we estimate $|g_V(\tau)|$ using the finite generators of $\Gamma$.
Suppose that
\[\{Q_0=Q,\cdots, Q_t, V_1,\cdots, V_s\}\]
is a fixed set of generators of $\Gamma$, including the $t+1$ parabolic generators $Q_0,\cdots, Q_t$, and the non-parabolic generators $V_1,\cdots, V_s$. If $\gamma\in\Gamma$, consider a factorization of $\gamma$ into {\it{sections}} (see \cite[pp. 156-157]{L}), $\gamma = C_1\cdots C_q$. Each section $C_i$ is either a non-parabolic generator of $\Gamma$ or a power of a parabolic generator of $\Gamma$. The importance of this factorization into sections lies in the result of Eichler \cite[Theorem 1]{Eic2} that, for any $\gamma = \sm a&b\\c&d\esm\in\Gamma$, the factorization can be carried out so that
\[q\leq m_1\log\mu(\gamma) + m_2,\]
where $m_1,m_2>0$ are independent of $\gamma$ and
\[\mu(\gamma) = a^2 + b^2 + c^2 + d^2.\]

We assume that the cocycle $\{g_\gamma|\ \gamma\in\Gamma\}$ in $\mc{P}$ satisfies
\begin{eqnarray} \label{generatorgrowth}
|(g_{V_i})_j(\tau)| &<& K(|\tau|^\rho + v^{-\sigma}),\ \text{for}\ 1\leq i\leq s,\ 1\leq j\leq p,\\
\nonumber |(g_i)_j(\tau)| &<& K(|\tau|^\rho + v^{-\sigma}),\ \text{for}\ 0\leq i\leq t,\ 1\leq j\leq p,
\end{eqnarray}
for positive constants $K,\rho$ and $\sigma$ which are independent of  particular generators involved.
Assume also that $2\sigma>-k$ and $\rho>k$.
Here, $g_i(\tau)$ is defined, by the definition of parabolic cocycle, as follows
\[g_{Q_i}(\tau) = (g_i|_{-k,\chi,\rho} Q_i)(\tau) - g_i(\tau).\]

\begin{lem} \label{lemforpoin2} If $\{g_\gamma|\ \gamma\in\Gamma\}$ is a parabolic cocycle, then there exists $K^*>0$ depending only upon $\Gamma$ and $\{g_\gamma|\ \gamma\in\Gamma\}$ such that
\[ |(g_{C_h}|_{-k,\chi,\rho} C_{h+1}\cdots C_q)_j(\tau)| \leq K^*\mu(\gamma)^e\lambda(\tau),\]
for $1\leq h\leq q,\ 1\leq j\leq p$. Here, $e = \max(\rho/2, \sigma+k/2)$ and $\gamma = C_1\cdots C_q$ is a factorization into sections of $\gamma\in\Gamma$ and $\lambda(\tau) = (|\tau|^2+v^{-2})^e\{\frac12 v^{2k-2\rho}+\frac12(1+4|\tau|^2)^{\rho-k}+v^{-\sigma}\}$.
\end{lem}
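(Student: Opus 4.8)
The plan is to prove the estimate by induction on the number of sections $q$, proceeding backwards from $h=q$ to $h=1$; this is the standard device used by Knopp and Lehner to control period functions of arbitrary elements of $\Gamma$ in terms of the finitely many generators. The base case $h=q$ asks for a bound on $|(g_{C_q})_j(\tau)|$, where $C_q$ is either a non-parabolic generator $V_i$ or a power $Q_i^n$ of a parabolic generator. For $C_q=V_i$ this is immediate from \eqref{generatorgrowth}. For $C_q=Q_i^n$ one writes $g_{Q_i^n}$ via the cocycle relation as a telescoping sum $\sum_{\ell=0}^{n-1}(g_{Q_i}|_{-k,\chi,\rho}Q_i^\ell)(\tau)$ and then uses $g_{Q_i}(\tau)=(g_i|_{-k,\chi,\rho}Q_i)(\tau)-g_i(\tau)$ together with the growth bound on $g_i$; since each $Q_i$ is conjugate in $\SL(2,\RR)$ to a translation, the quantities $\mu(Q_i^\ell)$ grow only polynomially in $\ell$ and $n\le \mu(C_q)^{1/2}$, so after applying Lemma \ref{lemforpoin} to convert the automorphy factors $(c\tau+d)$ into expressions in $|\tau|$ and $v$, one arrives at a bound of the shape $K^*\mu(C_q)^e\lambda(\tau)$ with $e=\max(\rho/2,\sigma+k/2)$.

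For the inductive step, suppose the bound holds for the tail $C_{h+1}\cdots C_q$. Using the cocycle identity \eqref{cocyclevvm} in the form
\[
g_{C_h\cdots C_q}=g_{C_{h+1}\cdots C_q}+(g_{C_h}|_{-k,\chi,\rho}C_{h+1}\cdots C_q),
\]
one has $(g_{C_h}|_{-k,\chi,\rho}C_{h+1}\cdots C_q)_j(\tau)=\chi(C_{h+1}\cdots C_q)^{-1}(c'\tau+d')^{k}\,(\rho^{-1}(C_{h+1}\cdots C_q)g_{C_h})_j(\gamma'\tau)$, where $\gamma'=C_{h+1}\cdots C_q=\sm a'&b'\\c'&d'\esm$. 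One then applies the base-case estimate to $|g_{C_h}|$ evaluated at the point $\gamma'\tau$, uses $|\chi|=1$ and the unitarity of $\rho$ to discard those factors, and uses Lemma \ref{lemforpoin} to bound $|c'\tau+d'|^{k}$ and to translate $|\gamma'\tau|$ and $\Im(\gamma'\tau)=v/|c'\tau+d'|^2$ back into $|\tau|,v$. The multiplicativity $\mu(\gamma)\le\text{const}\cdot\prod_i\mu(C_i)$ (a standard consequence of submultiplicativity of matrix norms, with the constant absorbed into $K^*$) then lets one combine the $\mu(C_h)^e$ coming from the base case with the $\mu(C_{h+1}\cdots C_q)^e$ from the induction hypothesis into $\mu(\gamma)^e$, while the $\tau$-dependent factors collapse into a single $\lambda(\tau)$ of the stated form — this is exactly where the hypotheses $2\sigma>-k$ and $\rho>k$ are needed, since they guarantee that the various exponents of $v$ and $(1+4|\tau|^2)$ produced by Lemma \ref{lemforpoin} stay dominated by the three terms defining $\lambda(\tau)$.

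The main obstacle is the bookkeeping in the inductive step: one must verify that after substituting $\gamma'\tau$ into the expression $(|\tau|^2+v^{-2})^e\{\tfrac12 v^{2k-2\rho}+\tfrac12(1+4|\tau|^2)^{\rho-k}+v^{-\sigma}\}$ and multiplying by the extra automorphy factor $|c'\tau+d'|^{k}$, every resulting term is bounded by a constant multiple of $\lambda(\tau)$ times a power of $\mu(\gamma')$ that can be absorbed into $\mu(\gamma)^e$. Concretely, $\Im(\gamma'\tau)^{-1}=|c'\tau+d'|^2/v$, and $|c'\tau+d'|^2\le 2(|\tau|^2+v^{-2})(c'^2+d'^2)\le 2(|\tau|^2+v^{-2})\mu(\gamma')$ by Lemma \ref{lemforpoin}, so each negative power of $\Im(\gamma'\tau)$ contributes both a power of $(|\tau|^2+v^{-2})$ and a power of $\mu(\gamma')$; one checks that the total power of $(|\tau|^2+v^{-2})$ never exceeds $e$ beyond what $\lambda(\tau)$ already carries, and that the constants and $\mu$-powers are uniform over generators by \eqref{generatorgrowth}. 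Once this verification is carried out the induction closes and the lemma follows with $K^*$ depending only on $\Gamma$, the generators, and the fixed constants $K,\rho,\sigma$ attached to the cocycle.
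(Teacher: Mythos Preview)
Your proposal has a genuine gap at the key step. The crucial fact that drives the paper's proof is Eichler's result \cite[Theorem 2]{Eic2}: for the particular factorization into sections one has $\mu(C_{h+1}\cdots C_q)\le K_2\,\mu(\gamma)$, i.e.\ the \emph{tail} of the section word has $\mu$ bounded by a constant times $\mu(\gamma)$. You replace this by the inequality $\mu(\gamma)\le\text{const}\cdot\prod_i\mu(C_i)$, which goes in the wrong direction: it cannot be used to bound $\mu(C_h)^e\cdot\mu(C_{h+1}\cdots C_q)^e$ above by $\mu(\gamma)^e$. Submultiplicativity of matrix norms does not give control of a subword's $\mu$ by the full word's $\mu$; that is exactly the nontrivial content of Eichler's theorem, and it is what you are missing.

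There are two secondary points. First, the paper does not argue by induction at all: for each $h$ it estimates $(g_{C_h}|_{-k,\chi,\rho}V)_j(\tau)$ directly, with $V=C_{h+1}\cdots C_q$, by plugging the \emph{raw} growth bound $|(g_{C_h})_l(V\tau)|<K(|V\tau|^{\rho}+(\Im V\tau)^{-\sigma})$ (not $\lambda(V\tau)$) and then expanding $|V\tau|,\Im V\tau$ via Lemma~\ref{lemforpoin}; this produces $\mu(V)^e\lambda(\tau)$ in one shot, after which Eichler's bound finishes. Your route through $\lambda(\gamma'\tau)$ would iterate the already-compound function $\lambda$, and the exponents would not close up to $e$ without further work. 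Second, for the parabolic section $C_h=Q_i^{m}$ the paper uses the parabolic condition to write $g_{C_h}=(g_i|_{-k,\chi,\rho}C_h)-g_i$ directly, so that $(g_{C_h}|_{-k,\chi,\rho}C_{h+1}\cdots C_q)$ becomes a difference of just two terms $(g_i|_{-k,\chi,\rho}C_h\cdots C_q)-(g_i|_{-k,\chi,\rho}C_{h+1}\cdots C_q)$, each handled by the same direct estimate; your telescoping sum with $n$ summands and the estimate $n\le\mu(C_q)^{1/2}$ is unnecessary (indeed the telescoping collapses once you substitute the parabolic relation), and in any case you would still need Eichler's tail bound afterwards.
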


\begin{proof} [\bf Proof of Lemma \ref{lemforpoin2}]
Consider first the case when $C_h$ is a non-parabolic generator. Let $V = C_{h+1}\cdots C_q = \sm a&b\\c&d\esm$. By the definition of the slash operator, we have
\begin{eqnarray*}
|(g_{C_h}|_{-k,\chi,\rho} V)_j(\tau)| = |c\tau+d|^k|\bar{\chi}(V)|\sum_{l=1}^p |\rho(V^{-1})_{j,l}||(g_{C_h})_l(V\tau)|,
\end{eqnarray*}
where $\rho(V^{-1})_{jl}$ is the $(j,l)$th entry of $\rho(V^{-1})$. Then we see by (\ref{generatorgrowth}) that
\begin{eqnarray*}
|(g_{C_h}|_{-k,\chi,\rho} V)_j(\tau)| &\leq& |c\tau+d|^k\sum_{l=1}^p |(g_{C_h})_l(V\tau)|\\
&<& |c\tau+d|^k \cdot pK\{|V\tau|^\rho + v^{-\sigma}|c\tau+d|^{2\sigma}\}\\
&=& pK|a\tau+b|^\rho|c\tau+d|^{k-\rho} + pK|c\tau+d|^{2\sigma+k}v^{-\sigma}.
\end{eqnarray*}
By Lemma \ref{lemforpoin}, we obtain that
\begin{eqnarray*}
|a\tau+b|^\rho &\leq& 2^{\rho/2}(|\tau|^2 + v^{-2})^{\rho/2}(a^2+ b^2)^{\rho/2},\\
|c\tau+d|^{2\sigma+k} &\leq& 2^{\sigma+k/2}(|\tau|^2 + v^{-2})^{\sigma+k/2}(c^2+d^2)^{\sigma+k/2},
\end{eqnarray*}
and
\[|c\tau+d|^{k-\rho} \leq \biggl(\frac{v^2}{1+4|\tau|^2}\biggr)^{(k-\rho)/2}(c^2+d^2)^{(k-\rho)/2}.\]
Here, we used the assumption that $2\sigma>-k$ and $\rho>k$.
Hence we can check that
\begin{eqnarray*}
|(g_{C_h}|_{-k,\chi,\rho} V)_j(\tau)| &<& pK 2^{\rho/2}(|\tau|^2 + v^{-2})^{\rho/2}(a^2 + b^2)^{\rho/2}\biggl(\frac{1+4|\tau|^2}{v^2}\biggr)^{(\rho-k)/2}(c^2 + d^2)^{(k-\rho)/2}\\
&&+pK 2^{\sigma+k/2}(|\tau|^2 + v^{-2})^{\sigma+k/2}(c^2 + d^2)^{\sigma+k/2}v^{-\sigma}.
\end{eqnarray*}
Since the non-zero $c,\ \sm *&*\\c&*\esm\in\Gamma$, with $\Gamma$ discrete, have a positive lower bound, it follows that $c^2+d^2$ has a positive lower bound. Hence we get the following inequality
\begin{eqnarray*}
|(g_{C_h}|_{-k,\chi,\rho} V)_j(\tau)| &<& K_1(a^2+b^2)^{\rho/2}(|\tau|^2+v^{-2})^{\rho/2}\biggl(\frac{1+4|\tau|^2}{v^2}\biggr)^{(\rho-k)/2}\\
&&+K_1'(c^2+d^2)^{\sigma+k/2}(|\tau|^2+v^{-2})^{\sigma+k/2}v^{-\sigma}.
\end{eqnarray*}
By \cite[Theorem 2]{Eic2}, we have
\[a^2 + b^2 + c^2 + d^2 = \mu(V) \leq K_2\mu(\gamma),\]
so that
\begin{eqnarray*}
|(g_{C_h}|_{-k,\chi,\rho} V)_j(\tau)| &\leq& K_3\mu(\gamma)^{\rho/2}(|\tau|^2+v^{-2})^{\rho/2}v^{k-\rho}(1+4|\tau|^2)^{(\rho-k)/2}\\
&&+K_3'\mu(\gamma)^{\sigma+k/2}v^{-\sigma}(|\tau|^2+v^{-2})^{\sigma+k/2}.
\end{eqnarray*}
Letting $e = \max(\rho/2, \sigma+k/2)$, we see that
\begin{eqnarray*}
|(g_{C_h}|_{-k,\chi,\rho} V)_j(\tau)| &\leq& K_4\mu(\gamma)^e(|\tau|^2+v^{-2})^e\{v^{k-\rho}(1+4|\tau|^2)^{(\rho-k)/2}+v^{-\sigma}\}\\
&\leq& K_4\mu(\gamma)^e(|\tau|^2+v^{-2})^e\{\frac12 v^{2k-2\rho}+\frac12(1+4|\tau|^2)^{\rho-k}+v^{-\sigma}\}.
\end{eqnarray*}
We used the fact that $\{v^{k-\rho} - (1+4|\tau|^2)^{(\rho-k)/2}\}^2\geq0$.
If we put $\lambda(\tau) = (|\tau|^2+v^{-2})^e\{\frac12 v^{2k-2\rho}+\frac12(1+4|\tau|^2)^{\rho-k}+v^{-\sigma}\}$, then we have
\[ |(g_{C_h}|_{-k,\chi,\rho} V)_j(\tau)| \leq K_4\mu(\gamma)^e\lambda(\tau).\]


Now we deal with the case in which $C_h$ is a parabolic section, that is $C_h = Q_i^m$ for some $0\leq i\leq t$. Then
\[g_{Q_i}(\tau) = (g_i|_{-k,\chi,\rho}Q_i)(\tau) - g_i(\tau),\]
and so, by the consistency condition for the cocycle as in (\ref{cocyclevvm}), we also have
\[g_{C_h}(\tau) = (g_i|_{-k,\chi,\rho}C_h)(\tau) - g_i(\tau).\]
From this it follows that
\[(g_{C_h}|_{-k,\chi,\rho}C_{h+1}\cdots C_q)_j(\tau) = (g_i|_{-k,\chi,\rho}C_h\cdots C_q)_j(\tau) - (g_i|_{-k,\chi,\rho}C_{h+1}\cdots C_q)_j(\tau),\]
for $1\leq j\leq p$. The previous argument applies to each of the two terms on the right hand side to yield
\[|(g_{C_h}|_{-k,\chi,\rho} V)_j(\tau)| \leq K_5\mu(\gamma)^e\lambda(\tau).\]
The proof is complete.
\end{proof}

Now we need to look into $\mu(\gamma)$ to complete the estimation of $|g_V(\tau)|$ in (\ref{genpoin}).
It is helpful at this point to introduce a specific fundamental region. We employ the Ford fundamental region $\mc{R}$ defined as follows (see \cite[pp. 139]{Leh0})
\begin{equation} \label{ford}
\mc{R} := \{\tau\in\HH|\ |\Re\tau|<\lambda/2\ \text{and}\ |c\tau+d|>1\ \text{for all}\ \gamma = \sm *&*\\c&d\esm\in\Gamma-\Gamma_\infty\},
\end{equation}
where $\lambda$ is a constant defined as in (\ref{lambda}).
Then there exists $v_0>0$ with $iv_0\in\mc{R}$. Now determine $\mc{L}$ by the condition that $\gamma\in\mc{L}$ if $-\lambda/2 \leq \Re(\gamma(iv_0)) < \lambda/2$.

\begin{lem} \cite[Lemma 6]{K} \label{lemforpoin3}
If $\gamma = \sm a&b\\c&d\esm\in\mc{L}$, chosen as indicated above, then
\[\mu(\gamma) \leq K'(c^2+d^2),\]
for a positive constant $K'$, independent of $\gamma$.
\end{lem}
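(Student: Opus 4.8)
The goal is to show that for the specific coset representatives $\gamma = \sm a&b\\c&d\esm \in \mc{L}$ chosen via the condition $-\lambda/2 \leq \Re(\gamma(iv_0)) < \lambda/2$, the quantity $\mu(\gamma) = a^2+b^2+c^2+d^2$ is bounded by a constant multiple of $c^2+d^2$. Since $c^2+d^2$ already appears in $\mu(\gamma)$, the content is entirely in bounding $a^2+b^2$ by $O(c^2+d^2)$. I would separate out the (finitely many, up to $\Gamma_\infty$ on the left, but here already pinned down) elements with $c=0$ — these are powers of $Q$ translated into the strip, of which the choice of $\mc{L}$ leaves only the identity (or a bounded set), so $a^2+b^2$ is trivially bounded there — and then treat the main case $c \neq 0$.

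For $c \neq 0$, the plan is to use the explicit action $\gamma(iv_0) = \frac{a(iv_0)+b}{c(iv_0)+d}$ and compute its real part:
\[
\Re(\gamma(iv_0)) = \frac{ac v_0^2 + bd}{c^2 v_0^2 + d^2}.
\]
The defining inequality for $\mc{L}$ gives $|ac v_0^2 + bd| \leq \frac{\lambda}{2}(c^2 v_0^2 + d^2)$. Also, since $iv_0 \in \mc{R}$ lies in the Ford fundamental region and $c \neq 0$ forces $\gamma \in \Gamma - \Gamma_\infty$, we have $|c(iv_0)+d|^2 = c^2 v_0^2 + d^2 \geq 1$; more usefully, $v_0$ and the discreteness of $\Gamma$ give that $c^2 v_0^2 + d^2$ is comparable to $c^2 + d^2$ up to constants depending only on $v_0$ (hence only on $\Gamma$). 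The imaginary part satisfies $\Im(\gamma(iv_0)) = \frac{v_0}{c^2 v_0^2 + d^2} \leq v_0$, and since $\gamma(iv_0)$ lies in the strip $|\Re| < \lambda/2$ with bounded imaginary part, $|\gamma(iv_0)|^2 = \frac{a^2 v_0^2 + b^2}{c^2 v_0^2 + d^2}$ is bounded above by a constant $M$ depending only on $\lambda$ and $v_0$. This yields $a^2 v_0^2 + b^2 \leq M(c^2 v_0^2 + d^2)$, and combining with $\min(1, v_0^2)(a^2+b^2) \leq a^2 v_0^2 + b^2$ and $c^2 v_0^2 + d^2 \leq \max(1, v_0^2)(c^2+d^2)$ gives $a^2 + b^2 \leq K''(c^2+d^2)$ for a constant $K''$ independent of $\gamma$. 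Adding $c^2 + d^2$ to both sides finishes the estimate with $K' = K'' + 1$.

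The main subtlety — the step I expect to require the most care — is justifying that $|\gamma(iv_0)|$ is actually bounded above uniformly over $\gamma \in \mc{L}$. Membership in the strip controls the real part, but one must observe that $\Im(\gamma(iv_0)) = v_0/(c^2 v_0^2 + d^2)$ together with $c^2 v_0^2 + d^2 \geq 1$ (which uses $iv_0 \in \mc{R}$ and $c \neq 0$, i.e. $\gamma \notin \Gamma_\infty$) shows the imaginary part is confined to the bounded interval $(0, v_0]$; hence $\gamma(iv_0)$ ranges in a bounded region of $\HH$, giving the uniform bound $M$. This is really where the choice of Ford fundamental region $\mc{R}$ and the point $iv_0$ is used. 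The remaining manipulations are elementary inequalities comparing weighted sums of squares, and the case $c = 0$ is immediate from the strip condition forcing the representative to be the identity.
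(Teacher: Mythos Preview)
Your argument is correct and is essentially the standard proof (the one given by Knopp in \cite[Lemma~6]{K}). Note that the paper itself does not prove this lemma at all; it merely cites Knopp, so there is no independent proof in the paper to compare against. Your handling of both cases---the finitely many $c=0$ representatives forced to be the identity by the strip condition, and the $c\neq 0$ case via the bound $|\gamma(iv_0)|^2=(a^2v_0^2+b^2)/(c^2v_0^2+d^2)\le (\lambda/2)^2+v_0^2$ coming from $|c(iv_0)+d|>1$ on the Ford region---is exactly the intended route.
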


In the following theorem we prove by using Lemma \ref{lemforpoin}, \ref{lemforpoin2} and \ref{lemforpoin3} the convergence of $\Phi(\tau;r)$ for sufficiently large $r$.

\begin{thm} \label{convergepoin} Let $e = \max(\rho/2, \sigma+k/2)$, where $\rho$ and $\sigma$ are given in (\ref{generatorgrowth}).
If $r>2e+4$, then the generalized Poincar\'e series $\Phi(\tau;r)$, defined as in (\ref{genpoin}), converges absolutely and uniformly on compact subsets of $\HH$.
\end{thm}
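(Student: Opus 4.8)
The plan is to estimate the general term of the series $\Phi(\tau;r)$ term by term and compare the result with a convergent series of the Eisenstein type $\sum_{V}|c\tau+d|^{-t}$, which is known to converge for $t>4$ on compact subsets of $\HH$. Fix a compact set $\mc{K}\subset\HH$; then $|\tau|$ is bounded above and $v$ is bounded below on $\mc{K}$, so that $(|\tau|^2+v^{-2})$ and the function $\lambda(\tau)$ appearing in Lemma \ref{lemforpoin2} are both bounded by a constant depending only on $\mc{K}$. First I would take $V=\sm a&b\\c&d\esm\in\mc{L}$ and write a factorization into sections $V=C_1\cdots C_q$ as in Eichler's theorem, with $q\leq m_1\log\mu(V)+m_2$. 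Applying the cocycle relation (\ref{cocyclevvm}) repeatedly gives
\[
g_V(\tau) = \sum_{h=1}^{q}\bigl(g_{C_h}|_{-k,\chi,\rho}C_{h+1}\cdots C_q\bigr)(\tau),
\]
so by Lemma \ref{lemforpoin2} each component satisfies $|(g_V)_j(\tau)|\leq q\cdot K^*\mu(V)^e\lambda(\tau)$. Hence on $\mc{K}$ we get $|(g_V)_j(\tau)|\leq K_{\mc{K}}\,q\,\mu(V)^e \leq K_{\mc{K}}'\,\mu(V)^e\log(\mu(V)+2)$ for constants depending only on $\mc{K}$ and $\Gamma$.

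Next I would feed in the geometric input. By Lemma \ref{lemforpoin3}, since $V\in\mc{L}$ we have $\mu(V)\leq K'(c^2+d^2)$, and by Lemma \ref{lemforpoin} we have $|c\tau+d|^2\geq \frac{v^2}{1+4|\tau|^2}(c^2+d^2)$, so on $\mc{K}$ there is a constant with $c^2+d^2\leq K_{\mc{K}}''|c\tau+d|^2$. Therefore the general term of $\Phi(\tau;r)$ is bounded on $\mc{K}$ by
\[
\frac{|(g_V)_j(\tau)|}{|c\tau+d|^r}\ \leq\ C_{\mc{K}}\,\frac{(c^2+d^2)^e\log(c^2+d^2+2)}{|c\tau+d|^r}\ \leq\ C_{\mc{K}}'\,\frac{|c\tau+d|^{2e}\log(|c\tau+d|^2+C)}{|c\tau+d|^r}.
\]
Absorbing the logarithmic factor into an arbitrarily small power of $|c\tau+d|$ (valid since $|c\tau+d|$ is bounded below away from $0$ for $V\in\Gamma-\Gamma_\infty$, and there are only finitely many $V$ with a fixed bottom row in $\mc{L}$), this is dominated by $C_{\mc{K}}''|c\tau+d|^{-(r-2e-\epsilon)}$ for any small $\epsilon>0$. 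Since the number of lower rows $(c,d)$ with $c^2+d^2$ in a dyadic range is polynomially bounded, the series $\sum_{V\in\mc{L}}|c\tau+d|^{-(r-2e-\epsilon)}$ converges uniformly on $\mc{K}$ as soon as $r-2e-\epsilon>4$, i.e. $r>2e+4$ after choosing $\epsilon$ small; this gives absolute and uniform convergence of $\Phi(\tau;r)$ on $\mc{K}$, and since $\mc{K}$ was arbitrary, on compact subsets of $\HH$.

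The main obstacle is the bookkeeping that converts the abstract cocycle $\{g_\gamma\}$ into a pointwise bound: one must control the length $q$ of the section factorization uniformly (Eichler's $q\leq m_1\log\mu(\gamma)+m_2$), apply Lemma \ref{lemforpoin2} uniformly over all sections, and then trade the resulting $\mu(V)^e\log\mu(V)$ against $|c\tau+d|^r$ using the two geometric lemmas — making sure at every step that the constants depend only on $\Gamma$, the cocycle, and the compact set $\mc{K}$, not on the individual $V$. Once this uniformity is in place the comparison with the convergent Eisenstein-type series is routine, and the hypothesis $r>2e+4$ is exactly what the comparison requires (the extra $2e$ comes from the growth of the cocycle and of $\mu(V)$, the $4$ from the convergence threshold of $\sum|c\tau+d|^{-t}$, and the logarithmic factor is harmless).
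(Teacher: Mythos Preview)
Your argument is correct and follows essentially the same route as the paper: decompose $g_V$ via the cocycle relation along a section factorization, invoke Lemma~\ref{lemforpoin2} on each piece, then convert $\mu(V)$ to $|c\tau+d|^2$ via Lemmas~\ref{lemforpoin3} and~\ref{lemforpoin} and compare with an Eisenstein-type series. The only cosmetic differences are that the paper absorbs the section count $q$ crudely by $q\leq m_3\mu(\gamma)$ (yielding $\mu(\gamma)^{e+1}$ and hence $|c\tau+d|^{2e+2}$) rather than keeping your logarithm, and that the convergence threshold for $\sum_{V\in\mc{L}}|c\tau+d|^{-t}$ is $t>2$ (cf.\ \cite[pp.~276--277]{Leh0}), not $t>4$ as you state---either way one arrives at the hypothesis $r>2e+4$.
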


\begin{proof} [\bf Proof of Theorem \ref{convergepoin}]
Suppose that $\gamma\in\mc{L}$. As before, we write $\gamma = C_1\cdots C_q$, a product of sections. Applying (\ref{cocyclevvm}) repeatedly, we have that
\begin{equation} \label{sumrepresentation}
g_{\gamma}(\tau) = g_{C_1\cdots C_q}(\tau) = (g_{C_1}|_{-k,\chi,\rho}C_2\cdots C_q)(\tau) + (g_{C_2}|_{-k,\chi,\rho}C_3\cdots C_q)(\tau) + \cdots + g_{C_q}(\tau)
\end{equation}
with $q\leq m_1\log\mu(\gamma) + m_2$ terms on the right hand side. We need to estimate the absolute value of general term of each component of the series (\ref{genpoin}). This is
\[\biggl|\frac{(g_\gamma)_j(\tau)}{(c\tau+d)^r}\biggr| = |(g_\gamma)_j(\tau)| |c\tau+d|^{-r}.\]
By (\ref{sumrepresentation}) and Lemma \ref{lemforpoin2} we have
\[|(g_\gamma)_j(\tau)|\leq K^*\mu(\gamma)^e\lambda(\tau)q \leq K_1^*\mu(\gamma)^{e+1}\lambda(\tau),\]
where $\lambda(\tau) = (|\tau|^2+v^{-2})^e\{\frac12 v^{2k-2\rho}+\frac12(1+4|\tau|^2)^{\rho-k}+v^{-\sigma}\}$, and we have used $q \leq m_1\log\mu(\gamma) + m_2\leq m_3\mu(\gamma)$ for a positive constant $m_3$, independent of $\gamma$. Lemma \ref{lemforpoin3} yields
\[|(g_\gamma)_j(\tau)|\leq  K_2^*(c^2+d^2)^{e+1}\lambda(\tau),\]
and by Lemma \ref{lemforpoin} we see that
\[|(g_\gamma)_j(\tau)|\leq K_2^*|c\tau+d|^{2e+2}\biggl(\frac{1+4|\tau|}{v^2}\biggr)^{e+1}\lambda(\tau).\]
Hence, we obtain that
\[|(g_\gamma)_j(\tau)||c\tau+d|^{-r} \leq K_2^*|c\tau+d|^{2e+2-r}\biggl(\frac{1+4|\tau|}{v^2}\biggr)^{e+1}\lambda(\tau).\]
With $r>2e+4$ it follows that $r-2e-2>2$ and hence by \cite[pp. 276-277]{Leh0} we see that a generalized Poincar\'e series $\Phi(\tau;r)$ converges absolutely and uniformly on compact subsets of $\HH$.
\end{proof}

Using a generalized Poincar\'e series $\Phi(\tau;r)$,  we can prove the existence of the vector-valued Eichler integral for given period functions. For this proof, we need the following result, which is a generalization of Petersson's result in \cite{Pet0}. For the notation, a vector-valued function $f(\tau)=\sum_{j=1}^p f_j(\tau)\mbf{e}_j$ is a meromorphic modular form of weight $-k$, multiplier system $\chi$ and type $\rho$ on $\Gamma$ if $f_j(\tau)$ is a meromorphic function on $\HH$ which satisfies two conditions in Definition \ref{dfnofvvm}.

\begin{lem} \label{genpet} Assume that $k$ is any real number. Let $\chi$ be a multiplier system of weight $-k$ and $\rho$ a unitary representation. Then there exists a vector-valued meromorphic modular form $f(\tau)$ of weight $-k$, multiplier system $\chi$ and type $\rho$ which has poles with given principal parts at finitely many points of $\bar{\mc{R}}\cap \HH$ and is  holomorphic elsewhere in $\bar{\mc{R}}$ with the possible exception of the cusps, where $\mc{R}$ is a Ford fundamental region as in (\ref{ford}).
\end{lem}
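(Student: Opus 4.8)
The plan is to reduce the assertion, by linearity, to the construction of one vector-valued meromorphic form with a single prescribed pole attached to a single basis vector, and then to build that form by a Petersson-type Poincar\'e series carrying an auxiliary complex parameter $s$, following the scalar construction in \cite{Pet0} (and its use in \cite{K}); the parameter is present precisely to force convergence for a weight $-k$ that is an arbitrary real number. \emph{Reduction.} It suffices to treat one point $\tau_0\in\bar{\mc R}\cap\HH$, one index $l$ with $1\le l\le p$, and one finite principal part $P(\tau)=\sum_{n=1}^{m}b_n(\tau-\tau_0)^{-n}$: one wants a vector-valued meromorphic modular form of weight $-k$, multiplier system $\chi$ and type $\rho$ whose only pole in $\bar{\mc R}\cap\HH$ is at $\tau_0$, with $\mbf e_l$-component having principal part $P$ there, the other components holomorphic near $\tau_0$, and which is meromorphic at the cusps; the general statement is a finite sum of such forms. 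If $\tau_0$ is an elliptic fixed point of $\Gamma$ one first replaces $\tau-\tau_0$ by a power of the local uniformizer $\bigl(\tfrac{\tau-\tau_0}{\tau-\bar\tau_0}\bigr)$ and averages the seed over the finite cyclic stabilizer $\bar\Gamma_{\tau_0}$; this modification is routine and I suppress it.

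For $s\in\CC$ with $\Re s$ large I would take the non-holomorphic seed $\Psi_s(\tau)=\psi(\tau)(\Im\tau)^{s}\,\mbf e_l$, where $\psi$ is a rational function whose principal part at $\tau_0$ equals $P(\tau)$ and which decays along every cusp direction, and set
\[
F(\tau;s)=\sum_{\gamma\in\bar\Gamma_{\tau_0}\backslash\bar\Gamma}\bigl(\Psi_s\big|_{-k,\chi,\rho}\gamma\bigr)(\tau)
=\sum_{\gamma}\bar\chi(\gamma)\,(c_\gamma\tau+d_\gamma)^{k}\,\rho(\gamma)^{-1}\,\psi(\gamma\tau)\,(\Im\gamma\tau)^{s}\,\mbf e_l .
\]
Because $\rho$ is unitary one has $\|\rho(\gamma)^{-1}\mbf e_l\|=1$, and $\Im(\gamma\tau)=v\,|c_\gamma\tau+d_\gamma|^{-2}$, so on compact subsets of $\HH$ off the $\Gamma$-orbit of $\tau_0$ each component of the series is dominated by a constant multiple of $\sum_{\gamma}|c_\gamma\tau+d_\gamma|^{-2\Re s}$ times bounded factors, the growth in $|c_\gamma\tau+d_\gamma|$ coming from $\psi$ being absorbed by the decay of $\psi$ in the cusp directions; thus the series converges for $\Re s$ large by the standard estimate used in \cite[pp.~276--277]{Leh0} (cf.\ Lemma \ref{lemforpoin}). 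For such $s$, $F(\cdot;s)$ is well-defined, satisfies $F(\cdot;s)|_{-k,\chi,\rho}\gamma=F(\cdot;s)$ for all $\gamma\in\Gamma$, and is real-analytic (not yet holomorphic) in $\tau$.

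The heart of the argument — this is where Petersson's result \cite{Pet0} is imported — is that $F(\tau;s)$ admits a meromorphic continuation in $s$ to a neighborhood of $s=0$ and that $F(\tau;0)$ is holomorphic in $\tau$ off the $\Gamma$-orbit of $\tau_0$: componentwise, $\partial_{\bar\tau}\Psi_s=-\tfrac{s}{2i}(\Im\tau)^{-1}\Psi_s$, a relation preserved by the slash action, so that each component of $F(\cdot;s)$ satisfies a differential relation in $\tau$ with an overall factor $s$ and is therefore annihilated by the relevant lowering operator at $s=0$. At $s=0$ the factor $(\Im\tau)^{s}$ becomes $1$; near $\tau_0$ only the identity coset contributes a pole, so $F(\cdot;0)=\psi(\tau)\mbf e_l+(\text{holomorphic near }\tau_0)$ and the prescribed principal part is reproduced exactly; every other pole of $F(\cdot;0)$ lies in the $\Gamma$-orbit of $\tau_0$, hence in $\bar{\mc R}$ only at $\tau_0$ (up to the boundary identifications of the Ford region (\ref{ford})), so $F(\cdot;0)$ is holomorphic on $\bar{\mc R}\cap\HH$ away from $\tau_0$; and the decay of $\psi$ together with the convergence estimates bounds $F(\cdot;0)$ polynomially near every cusp, so it is meromorphic at the cusps. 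Setting $f:=F(\cdot;0)$, after subtracting if necessary a holomorphic form produced by the continuation so as to pin down the principal part, gives the required form.

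The genuinely nontrivial step is this continuation in $s$ together with the holomorphy of $F(\cdot;0)$ in $\tau$ — i.e.\ checking that Petersson's analytic continuation goes through uniformly for an arbitrary real weight and, componentwise, for the $p$-dimensional unitary type $\rho$ — as well as the bookkeeping that the limit introduces no spurious poles and returns the prescribed principal part. An alternative, purely algebraic, route avoids the parameter $s$ altogether: identify vector-valued meromorphic modular forms of weight $-k$, multiplier system $\chi$ and type $\rho$ with meromorphic sections of a rank-$p$ holomorphic vector bundle $E$ on the compact Riemann surface $\Gamma\backslash\HH^{*}$ (with the orbifold/$V$-manifold structure at the elliptic points); writing $D$ for the effective divisor recording the prescribed pole orders at the prescribed points and $\mfr a$ for a cusp of $\Gamma$ (one exists since $\Gamma$ is an $H$-group), the bundle $E\otimes\mc O(c\,\mfr a)$ has $H^{1}=0$ for $c$ large by Serre duality, so $H^{0}\bigl(E\otimes\mc O(D+c\,\mfr a)\bigr)$ surjects onto $\bigoplus_i\bigl(E\otimes\mc O(D)/E\bigr)_{P_i}$, the cusp absorbing the Mittag--Leffler obstruction — which is exactly why poles at the cusps must be permitted in the statement.
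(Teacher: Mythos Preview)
Both of your routes differ from the paper's. The paper avoids any auxiliary parameter $s$ and any sheaf cohomology: it first produces a nonzero \emph{holomorphic} form $H\in M^!_{-k,\chi,\rho}(\Gamma)$ by taking a Knopp--Mason vector-valued Poincar\'e series $P$ of a large real weight $-k+12n>2$ (where absolute convergence is automatic) and setting $H=P/\Delta^{n}$; then, for each desired pole location $\tau_0$ and order, it quotes Petersson's \emph{scalar} result \cite{Pet0} to obtain a scalar meromorphic function $g$ of weight $0$ and trivial character on $\Gamma$ with a single pole at $\tau_0$ in $\bar{\mc R}$, and manufactures the prescribed poles by combining $H$ with such $g$'s. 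The point is that no analytic continuation is needed and the vector-valued problem is reduced to already-known scalar facts plus an absolutely convergent vector-valued Poincar\'e series.

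Your Poincar\'e-series-with-parameter approach is the natural direct generalization of Petersson's own construction, but as you yourself concede, the continuation of $F(\tau;s)$ to $s=0$ together with holomorphy in $\tau$ there is the entire substance, and you have not actually carried it out; appealing to \cite{Pet0} is not quite sufficient since that reference treats only the scalar case (unitarity of $\rho$ does make the estimates go through componentwise, so it can be done, but the argument must be written down rather than asserted). Your alternative vector-bundle argument via Serre duality is correct and self-contained, and is arguably the cleanest route; it buys generality and conceptual transparency at the price of importing Riemann--Roch/Serre duality on an orbifold curve, whereas the paper's argument stays entirely within classical function theory and absolutely convergent series.
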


\begin{rmk}
In fact, we consider the principal part at a point $x$ in the Riemann surface $\Gamma\backslash \mathbb{H}$. Thus in this paper the principal part at $\tau_0 \in \mathbb{H}$ means the principal part at $\tau_0$ that can be expressed as
$$\sum_{n_0\leq n<0} c(n)(\tau-\tau_0)^{en},$$
where $e$ is the order of $\Gamma_{\tau_0}$.
\end{rmk}

\begin{proof} [\bf Proof of Lemma \ref{genpet}]
We want to construct a vector-valued function $I_{m, \tau_0, i}(\tau)$ such that
\begin{enumerate}
\item[(1)] a function $(I_{m,\tau_0,i})_m(\tau)$ has a pole of order $i$ at $\tau_0$ in $\mc{R}$ and is holomorphic everywhere else on $\mc{R}$,
\item[(2)] a function $(I_{m,\tau_0,i})_j(\tau)$ is holomorphic on $\HH$ if $j\neq m$.
\end{enumerate}

 First we construct a vector-valued modular form $H(\tau)$ in $M^!_{-k,\chi,\rho}(\Gamma)$. Knopp and Mason \cite{KM} defined a vector-valued Poincar\'e series $P(\tau) = P(\tau,\rho,r,\nu,m,j)$ in the following fashion. Fix $m$ an integer and $j,\ 1\leq j\leq p$, and let
\[P(\tau,\rho,r,\chi,m,j) = \frac12 \sum_{M\in\Gamma_\infty\setminus\Gamma}\frac{e^{2\pi i(m+\kappa_j)M\tau}}{\chi(M)(c\tau+d)^r}\rho^{-1}(M)\mbf{e}_j,\]
where $\chi$ is a multiplier system of weight $r$ and
\[\chi(Q)\rho(Q) = \sm e^{2\pi i\kappa_{1}}&&&&\\ &\cdot&&&\\ &&\cdot&&\\ &&&\cdot&\\ &&&&e^{2\pi i\kappa_{p}}\esm.\]
Here, $Q$ is a generator of $\Gamma_\infty$.
This Poincar\'e series is a vector-valued modular form in $M^!_{r,\chi,\rho}(\Gamma)$ (Knopp and Mason deals with a vector-valued Poincar\'e series $P(\tau,\rho,r,\nu,m,j)$ when $r$ is an even integer with $r>2$ and $\Gamma$ is a full modular group. But the same proof for the convergence applies to the case where $r$ is real and $\Gamma$ is a $H$-group).
Let $n$ be a positive integer such that $-k+12n>2$. Then by the above discussion, there is a vector-valued modular form $P(\tau)$ in $M^!_{-k+12n,\chi,\rho}(\Gamma)$. Then $H(\tau) := \frac{P(\tau)}{\Delta(\tau)^n}$ is a vector-valued modular form in $M^!_{-k,\chi,\rho}(\Gamma)$, where $\Delta(\tau) = e^{2\pi i\tau}\prod_{n=1}^\infty (1-e^{2\pi i n\tau})^{24}$ is a cusp form of weight $12$ with the trivial character on $\SL(2,\ZZ)$, which has no zeros or poles on $\HH$.

Next we want to modify the function $H(\tau)$ so that we get a desired function $I_{m,\tau_0,i}(\tau)$. Note that $H(\tau)=\sum_{j=1}^p H_j(\tau)\mbf{e}_j$ is holomorphic on $\HH$. Let $Z(H,m,\tau_0)$ be the order of zero of $H_m(\tau)$ at $\tau_0$. By the Petersson's result in \cite{Pet0}, we see that there is a (scalar-valued) meromorphic modular form $g(\tau)$ of weight $0$ with the trivial character on $\Gamma$ such that $g(\tau)$ has a pole of order $i+Z(H,m,\tau_0)$ at $\tau_0$ and is holomorphic elsewhere on $\mc{R}$. We can consider the function
\begin{equation*}
(I_{m,\tau_0,i})_j(\tau) :=
\begin{cases}
H_j(\tau) & \text{if $j\neq m$},\\
H_j(\tau) g(\tau) & \text{if $j=m$}.
\end{cases}
\end{equation*}
Then this is a desired function.

Now we construct a vector-valued meromorphic modular form $f(\tau)$ satisfying the desired property for the principal parts.
Suppose that the principal part at $\tau_0$ is given by
\begin{equation} \label{givenprin}
\sum_{n_0\leq n<0} c(n)(\tau-\tau_0)^{en},
\end{equation}
where $e$ is the order of $\Gamma_{\tau_0}$. It is enough to show that there is a vector-valued meromorphic modular form $f(\tau)=\sum_{j=1}^p f_j(\tau)\mbf{e}_j$ of weight $-k$, multiplier system $\chi$ and type $\rho$ on $\Gamma$ which has a pole  with a given principal part at $\tau_0$ in the $m$th component and is holomorphic elsewhere in $\mc{R}$.
Let $a(m,\tau_0,i) = \lim_{\tau\to\tau_0} (I_{m,\tau_0,i})_m(\tau)(\tau-\tau_0)^i$. If we let
\[ f(\tau) :=\sum_{n_0\leq n<0} \frac{c(n)}{a(m,\tau_0,n)}I_{m,\tau_0,n}(\tau),\]
then the principal part of $f_m(\tau)$ at $\tau_0$ is the same as given in (\ref{givenprin}) and $f_m(\tau)$ is holomorphic elsewhere in $\mc{R}$.
\end{proof}

 Following the literature on the Eichler cohomology theory, we introduce {\it{a left-finite expansion}} at each parabolic cusp for the consistency. The expansion at the cusp $i\infty$ has the form
\[F(\tau) = \sum_{j=1}^p \sum_{n\gg-\infty} a_0(n,j)e^{2\pi i(n+\kappa_{j,0})/\lambda_0}\mbf{e}_j,\]
where $Q=Q_0=\sm 1&\lambda_0\\0&1\esm,\ \lambda_0>0,$ is a generator of $\Gamma_\infty$ and
\begin{equation} \label{kappa}
\chi(Q)\rho(Q) = \sm e^{2\pi i\kappa_{1,0}}&&&&\\ &\cdot&&&\\ &&\cdot&&\\ &&&\cdot&\\ &&&&e^{2\pi i\kappa_{p,0}}\esm.
\end{equation}
Let $q_1,\cdots, q_t$ be the inequivalent parabolic cusps other than infinity. Suppose also that
\[Q_i = \sm *&*\\ c_i&d_i\esm,\ 1\leq i\leq t,\]
is a parabolic generator, i.e., $Q_i$ is a generator of $\Gamma_j$, where $\Gamma_j$ is the cyclic subgroup of $\Gamma$ fixing $q_i,\ 1\leq i\leq t$.
To describe the expansion at a finite parabolic cusp $q_i$, choose $A_i := \sm 0&-1\\ 1&-q_i\esm$, so that $A_i$ has determinant $1$ and $A_i(q_i) = \infty$. Then the {\it{width}} of the cusp $q_i$ is a positive real number such that
\[A_i^{-1}\sm 1&\lambda_i\\0&1\esm A_i = Q_i.\]
The expansion at the cusp $q_i$ has the form
\[F(\tau) = (\tau-q_i)^k\sum_{j=1}^p \sum_{n\gg-\infty} a_i(n,j)e^{\frac{-2\pi i(n+\kappa_{j,i})}{\lambda_i(\tau-q_i)}}\mbf{e}_j,\]
where
\begin{equation} \label{kappafinite}
\chi(Q_i)\rho(Q_i) = \sm e^{2\pi i\kappa_{1,i}}&&&&\\ &\cdot&&&\\ &&\cdot&&\\ &&&\cdot&\\ &&&&e^{2\pi i\kappa_{p,i}}\esm,
\end{equation}
for $0\leq \kappa_{j,i}<1,\ 1\leq j\leq p$ and $1\leq i\leq t$. (The motivation for the form of these expansions can be found in \cite[pp. 17-20]{Kno2}.)

 With these notations we can prove the existence of the vector-valued Eichler integral for given period functions.

\begin{thm} \label{converse}
Assume that $k$ is any real number. Let $\chi$ be a multiplier system of weight $-k$ and $\rho$ a unitary representation. Suppose $\{g_\gamma|\ \gamma\in\Gamma\}$ is a parabolic cocycle of weight $-k$, multiplier system $\chi$ and type $\rho$ on $\Gamma$ in $\mc{P}$.
Then there exists a vector-valued function $\Phi(\tau)$, holomorphic in $\HH$, such that
\[(\Phi|_{-k,\chi,\rho}\gamma)(\tau) - \Phi(\tau) = g_\gamma(\tau)\]
for all $\gamma\in\Gamma$.
\end{thm}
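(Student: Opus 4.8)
The plan is to reduce the statement to the scalar-valued case treated by Knopp by using a vector-valued generalized Poincaré series, following the strategy of \cite{K, G, KM2}. First I would normalize the cocycle so that the Poincaré series of Section \ref{section3.3} applies: given the parabolic cocycle $\{g_\gamma\}$, subtract a coboundary so that $g_Q = 0$ for the generator $Q = \sm 1&\lambda\\0&1\esm$ of $\Gamma_\infty$. Concretely, by the parabolic condition there is $Q_0 \in \mc{P}$ with $g_Q = (Q_0|_{-k,\chi,\rho}Q) - Q_0$; replacing $g_\gamma$ by $g_\gamma - \big((Q_0|_{-k,\chi,\rho}\gamma) - Q_0\big)$ changes nothing about the conclusion (it only alters the Eichler integral by $Q_0$) and makes $g_Q = 0$. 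One must also arrange the growth bounds \eqref{generatorgrowth} on the finitely many generators and the associated functions $g_i$; this is automatic since each $g_\gamma \in \mc{P}$, after possibly enlarging $\rho$ and $\sigma$, and one may assume $2\sigma > -k$ and $\rho > k$ by further enlarging.

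Next, with these normalizations I would form the vector-valued generalized Poincaré series
\[\Phi_0(\tau;r) = \sum_{j=1}^p \sum_{V=\sm a&b\\c&d\esm \in \mc{L}} \frac{(g_V)_j(\tau)}{(c\tau+d)^r}\mbf{e}_j,\]
which by Theorem \ref{convergepoin} converges absolutely and locally uniformly, hence is holomorphic on $\HH$, provided $r > 2e+4$. The key computation is to show that its ``periods'' reproduce $g_\gamma$ up to an error that lies in the image of $\mc{P}$ under the slash. For $\gamma \in \Gamma$ one computes $(\Phi_0|_{-k,\chi,\rho}\gamma)(\tau) - \Phi_0(\tau)$ by the standard unfolding: using the cocycle relation \eqref{cocyclevvm} in the form $g_{V\gamma} = (g_V|_{-k,\chi,\rho}\gamma) + g_\gamma$ and reindexing the sum $V \mapsto V\gamma$ over $\mc{L}$, the $(g_V|\gamma)$-part almost reassembles $\Phi_0$ again except for the discrepancy between $(c\tau+d)^{-r}$ reindexed versus the weight-$(-k)$ slash, while the $g_\gamma$-part factors out. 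The outcome is an identity of the shape $(\Phi_0|_{-k,\chi,\rho}\gamma)(\tau) - \Phi_0(\tau) = g_\gamma(\tau)\cdot(\text{something involving }\Phi_0\text{-type sums}) + (\text{a function built from }g_\gamma)$; the correct bookkeeping (as in \cite{K}) shows the genuine obstruction to $\Phi_0$ being an Eichler integral with periods $g_\gamma$ is a vector-valued \emph{meromorphic} modular form of weight $-k$.

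The main obstacle — and the place where Lemma \ref{genpet} is used — is to correct $\Phi_0(\tau;r)$ into a genuine holomorphic Eichler integral. The series $\Phi_0$ will in general have the right periods only up to a meromorphic modular form, or will itself fail to be holomorphic at a discrete set of points; one subtracts off a vector-valued meromorphic modular form $f(\tau)$ of weight $-k$, multiplier system $\chi$, type $\rho$, with prescribed principal parts matching exactly the poles of $\Phi_0$ in $\bar{\mc{R}}$, whose existence is precisely the content of Lemma \ref{genpet}. Since a modular form has vanishing periods, $\Phi(\tau) := \Phi_0(\tau) - f(\tau)$ still satisfies $(\Phi|_{-k,\chi,\rho}\gamma) - \Phi = g_\gamma$ for all $\gamma$, and now $\Phi$ is holomorphic on $\HH$. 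Finally I would add back the coboundary correction $Q_0$ removed in the first step, so the resulting function is an Eichler integral for the original cocycle $\{g_\gamma\}$. The delicate points to write out carefully are: the convergence-justified term-by-term manipulation of the double series under the slash operator, the verification that the leftover term is indeed a meromorphic modular form of the stated type (using unitarity of $\rho$), and the matching of principal parts so that Lemma \ref{genpet} can be invoked with the correct data at each point of $\bar{\mc{R}} \cap \HH$.
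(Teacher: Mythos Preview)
Your overall strategy is right, and the normalization step and final pole-removal via Lemma \ref{genpet} match the paper's argument. But there is a genuine gap in the middle: the generalized Poincar\'e series $\Phi_0(\tau;r)$ is \emph{not} an Eichler integral of weight $-k$, not even up to a meromorphic modular form. Its actual transformation law (which the paper works out explicitly) is
\[
(\Phi_0|_{-k,\chi,\rho}M)(\tau;r) = (\gamma\tau+\delta)^r\bigl(\Phi_0(\tau;r) - \psi(\tau;r)\,g_M(\tau)\bigr),
\]
where $\psi(\tau;r)=\sum_{V\in\mc{L}}(c\tau+d)^{-r}$ is the scalar Eisenstein series of weight $r$. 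The extra factor $(\gamma\tau+\delta)^r$ shows that $\Phi_0$ transforms with the wrong weight; no subtraction of a weight-$(-k)$ meromorphic modular form will fix that. Your description of the unfolding (``almost reassembles $\Phi_0$ again except for the discrepancy between $(c\tau+d)^{-r}$ reindexed versus the weight-$(-k)$ slash'') is the symptom of this, but you never say how to cancel the discrepancy.

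The missing step, which is the heart of the argument, is to \emph{divide} by $\psi$: set $F(\tau) := -\Phi_0(\tau;r)/\psi(\tau;r)$. Since $\psi$ is a scalar modular form of weight $r$, the stray $(\gamma\tau+\delta)^r$ cancels and one gets exactly
\[
(F|_{-k,\chi,\rho}M)(\tau) - F(\tau) = g_M(\tau).
\]
Now $F$ is meromorphic on $\HH$, with poles only at the zeros of $\psi$ inside $\bar{\mc{R}}$; \emph{this} is where Lemma \ref{genpet} is invoked, to subtract a meromorphic modular form with matching principal parts at those finitely many zeros. (Note also that $\Phi_0$ itself is holomorphic on $\HH$, contrary to your ``or will itself fail to be holomorphic''; the poles only appear after the division.) Once you insert the division by $\psi$, the rest of your outline goes through essentially as in the paper.
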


\begin{rmk} \label{leftexpansion}
Since $\{g_\gamma|\ \gamma\in\Gamma\}$ is a parabolic cocycle, there is an element $g_i(\tau)\in\mc{P}$ such that
\[g_{Q_i}(\tau) = (g_i|_{-k,\nu,\rho}Q_i)(\tau) - g_i(\tau),\]
for $0\leq i\leq t$.
Then $\Phi(\tau) - g_i(\tau)$ is invarinat under the slash operator $|_{-k,\chi,\rho}Q_i$ and hence has the expansions at parabolic cusps $q_i,\ 0\leq i\leq t$, of the forms
\begin{eqnarray*}
\Phi(\tau) &=& g_i(\tau) + (\tau-q_i)^k\sum_{j=1}^p \sum_{n\gg-\infty}a_i(n,j)e^{\frac{-2\pi i(n+\kappa_{j,i})}{\lambda_i(\tau-q_i)}}\mbf{e}_j,\ 1\leq i\leq t,\\
\Phi(\tau) &=& g_0(\tau) + \sum_{j=1}^p \sum_{n\gg-\infty} a_0(n,j)e^{\frac{2\pi i(n+\kappa_{j,0})}{\lambda_0}}\mbf{e}_j,\ i=0.
\end{eqnarray*}
\end{rmk}

\begin{proof} [\bf Proof Theorem \ref{converse}]
We prove this theorem by using a vector-valued generalized Poincar\'e series $\Phi(\tau;r)$. To define $\Phi(\tau;r)$, we need the condition that $g_Q(\tau)=0$. But that is not true in general. So we modify the given cocycle $\{g_\gamma|\ \gamma\in\Gamma\}$ as follows.
For $\gamma\in\Gamma$, put
\[g_\gamma^*(\tau) = g_\gamma(\tau) - (g_0|_{-k,\chi,\rho}\gamma-g_0)(\tau).\]
Then $\{g_\gamma^*|\ \gamma\in\Gamma\}$ is again a parabolic cocycle in $\mc{P}$ and now $g_Q^*(\tau) = 0$. Thus we may form the vector-valued generalized Poincar\'e series $\Phi^*(\tau;r)$ as
\[\Phi^*(\tau;r) := \sum_{j=1}^p \sum_{V=\sm a&b\\c&d\esm\in\mc{L}} \frac{(g_V^*)_j(\tau)}{(c\tau+d)^r}\mbf{e}_j\]
for sufficiently large $r$ so that $\Phi^*(\tau;r)$ converges.

First we investigate the transformation properties of the vector-valued generalized Poincar\'e series $\Phi^*(\tau;r)$.
Note that for every $M\in\Gamma$ there is a ono-to-one correspondence between $\mc{L}$ and $\mc{L}M$. Therefore, using the absolute convergence and the fact that $r\in 2\ZZ_{>0}$,
 for $M = \sm \alpha&\beta\\ \gamma&\delta\esm, VM = \sm \tilde{\alpha}&\tilde{\beta}\\ \tilde{\gamma}&\tilde{\delta}\esm\in\Gamma$, we have
\begin{eqnarray*}
(\Phi^*|_{-k,\chi,\rho}M)(\tau;r)&=&\chi^{-1}(M)(\gamma\tau+\delta)^k\rho^{-1}(M)\Phi^*(M\tau;r)\\
&=&\sum_{V=\sm a&b\\c&d\esm\in\mc{L}}\chi^{-1}(M)(\gamma\tau+\delta)^k\rho^{-1}(M)g_V^*(M\tau)(cM\tau+d)^{-r}\\
&=&\sum_{V=\sm a&b\\c&d\esm\in\mc{L}}(g_V^*|_{-k,\chi,\rho}M)(\tau)(cM\tau+d)^{-r}\\
&=&\sum_{V=\sm a&b\\c&d\esm\in\mc{L}}(g_{VM}^*(\tau) - g_M^*(\tau))(cM\tau+d)^{-r}\\
&=&(\gamma\tau+\delta)^r\sum_{V=\sm a&b\\c&d\esm\in\mc{L}}(g_{VM}^*(\tau)-g_M^*(\tau))(\tilde{\gamma}\tau+\tilde{\delta})^{-r}\\
&=&(\gamma\tau+\delta)^r(\Phi^*(\tau;r) - \psi(\tau;r)g_M^*(\tau)).
\end{eqnarray*}
Here $\psi(\tau;r)$ is the classical Eisenstein series
\[\psi(\tau;r) = \sum_{V=\sm a&b\\c&d\esm\in\mc{L}}(c\tau+d)^{-r},\]
which converges absolutely for $r>2$. Also we have the transformation law
\[\psi(M\tau;r) = (\gamma\tau+\delta)^r\psi(\tau;r)\]
for all $M=\sm \alpha&\beta\\\gamma&\delta\esm \in\Gamma$.

Next we define the vector-valued function $F^*(\tau)$ using $\Phi^*(\tau;r)$ and $\psi(\tau;r)$ in the following way
\[F^*(\tau) := -\frac{\Phi^*(\tau;r)}{\psi(\tau;r)}.\]
Then we get
\begin{eqnarray*}
(F^*|_{-k,\chi,\rho}M)(\tau) &=& -\frac{(\Phi^*|_{-k,\chi,\rho}M)(\tau)}{\psi(M\tau;r)}= -\frac{\Phi^*(\tau;r)}{\psi(\tau;r)} + g_M^*(\tau)= F^*(\tau) + g_M^*(\tau).
\end{eqnarray*}
Defining $F(\tau) := F^*(\tau) +g_0(\tau)$, we have
\[(F|_{-k,\chi,\rho}M)(\tau) -F(\tau) = g_M^*(\tau) + (g_0|_{-k,\chi,\rho}M)(\tau) - g_0(\tau) = g_M(\tau)\]
for $M\in\Gamma$. Therefore, $F(\tau)$ is the vector-valued Eichler integral satisfying the desired transformation properties.

However, $F(\tau)$ may have a pole at $\mc{R}$ because of $\psi(\tau;r)$. So we modify
 the function $F(\tau)$ by substracting a vector-valued meromorphic modular form $f(\tau)$ with the same principal part at all poles of $F(\tau)$ on $\mc{R}$  using Lemma \ref{genpet},  so that we get an vector-valued Eichler integral $F(\tau)-f(\tau)$ with poles only at cusps and the same periods as $F(\tau)$.
 Therefore, $F(\tau)-f(\tau)$ is holomorphic on $\HH$ and for all $M\in\Gamma$ we have that
\[((F-f)|_{-k,\chi,\rho}M)(\tau) = (F-f)(\tau) + g_M(\tau).\]
\end{proof}

\medskip

\subsection{Parabolicity of cocycles in $H^1_{-k,\chi,\rho}(\Gamma,\mc{P})$} \label{section3.4}
In this subsection we show that every cocycle in $\mc{P}$ is parabolic. To prove this we need the following lemma.

\begin{lem} \label{taylor}
Suppose that $g(\tau)=\sum_{j=1}^pg_i(\tau)\mbf{e}_i\in\mc{P}$. For a unitary $p$ by $p$ matrix $C$, there exists $f(\tau)=\sum_{j=1}^p f_j(\tau)\mbf{e}_j\in\mc{P}$ such that
\begin{equation} \label{vectoreqn}
 C f(\tau+1) - f(\tau) = g(\tau),\ \tau\in\HH.
\end{equation}
\end{lem}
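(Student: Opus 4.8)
The plan is to reduce the vector equation \eqref{vectoreqn} to $p$ scalar equations by diagonalizing the unitary matrix $C$, solve each scalar equation by a suitable summation/integral formula, and then estimate the growth of the solution to show that it lies in $\mc{P}$. Since $C$ is unitary there is a unitary matrix $U$ with $U^{-1}CU = \mathrm{diag}(e^{2\pi i\alpha_1},\dots,e^{2\pi i\alpha_p})$ for some real numbers $\alpha_1,\dots,\alpha_p$; replacing $g$ by $U^{-1}g$ and seeking $U^{-1}f$, and noting that $\mc{P}$ is preserved by a fixed unitary change of basis (the growth condition \eqref{growthconditionforp} is componentwise but invariant up to a constant under a unitary mixing of components), we may assume $C$ is diagonal. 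It then suffices to treat a single scalar equation $e^{2\pi i\alpha}f(\tau+1) - f(\tau) = g(\tau)$ with $g$ holomorphic on $\HH$ satisfying $|g(\tau)| < K(|\tau|^\rho + v^{-\sigma})$.

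Next I would write down the solution explicitly. Setting $f(\tau) = e^{-2\pi i\alpha\tau/1}\,h(\tau)$ — more precisely, absorbing the constant $e^{2\pi i\alpha}$ into an exponential factor $e^{2\pi i\alpha\tau}$ so that the twisted difference equation becomes an untwisted one — the equation reduces to $\tilde h(\tau+1) - \tilde h(\tau) = \tilde g(\tau)$ for a correspondingly modified right-hand side, which one solves by the standard device used by Knopp and Lehner for Eichler integrals: split $g$ according to whether $v = \Im\tau$ is large or small and use the two series
\[
-\sum_{n=0}^{\infty} \tilde g(\tau+n) \quad\text{and}\quad \sum_{n=1}^{\infty}\tilde g(\tau-n),
\]
or, to get a single formula valid throughout $\HH$, the integral representation via the kernel $\int_{\Re w = c} \tilde g(w)\,\frac{e^{2\pi i(w-\tau)}}{1 - e^{2\pi i(w-\tau)}}\,dw$ along a vertical line, which produces a holomorphic $\tilde h$ on $\HH$ satisfying the difference equation. (When $\alpha \notin \ZZ$ one can even sum the geometric-type series $-\sum_{n\ge 0} e^{2\pi i\alpha n}g(\tau+n)$ directly, which converges because $|e^{2\pi i\alpha}|=1$ does not help but the polynomial growth of $g$ against the shift is controlled; the safe route is the integral formula, which works uniformly in $\alpha$.) The point is that such a construction is classical and yields holomorphy on all of $\HH$.

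The main obstacle, and the part deserving genuine care, is verifying that the constructed $f$ satisfies the growth bound \eqref{growthconditionforp}, i.e. $|f(\tau)| < K'(|\tau|^{\rho'} + v^{-\sigma'})$ for suitable constants. This is exactly the estimate carried out in Knopp's paper for the scalar case: one bounds the tail sums or the contour integral using $|g(\tau \pm n)| < K(|\tau\pm n|^\rho + v^{-\sigma})$, noting that $|\tau \pm n| \le |\tau| + n$ and that $\Im(\tau\pm n) = v$ is unchanged, so the $v^{-\sigma}$ contribution is handled by the convergence factor coming from $e^{2\pi i \cdot}$-decay in the half-plane direction, while the $|\tau|^\rho$-type contribution gets absorbed into a possibly larger polynomial exponent. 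The twist by $e^{2\pi i\alpha}$, being of modulus one, does not affect these estimates. Once the bound is in place, $f \in \mc{P}$ and, reassembling the components and undoing the unitary change of basis, we obtain $f = \sum_j f_j \mbf{e}_j \in \mc{P}$ with $Cf(\tau+1) - f(\tau) = g(\tau)$, as desired. I would organize the write-up as: (i) diagonalization reduction; (ii) the explicit formula for each scalar component; (iii) holomorphy; (iv) the growth estimate; (v) reassembly.
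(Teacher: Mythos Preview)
Your approach is essentially the paper's: diagonalize the unitary matrix $C$ by a unitary change of basis, reduce to $p$ decoupled scalar equations $d_j f'_j(\tau+1) - f'_j(\tau) = g'_j(\tau)$ with $|d_j|=1$, solve each, and reassemble. The paper handles the scalar step in one line by invoking Proposition~9 of Knopp~\cite{K}, which is precisely the statement that such an $f'_j$ exists in the scalar $\mc{P}$; it does not attempt to reproduce the construction.

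Your sketch of that scalar step, however, is shaky in places. The series $-\sum_{n\ge 0} e^{2\pi i\alpha n} g(\tau+n)$ does \emph{not} converge: $|e^{2\pi i\alpha n}|=1$ and $|g(\tau+n)|$ grows polynomially in $n$, so there is nothing to sum. Likewise, the substitution $f(\tau)=e^{-2\pi i\alpha\tau}h(\tau)$ turns the equation into $h(\tau+1)-h(\tau)=e^{2\pi i\alpha\tau}g(\tau)$, whose right-hand side has $|e^{2\pi i\alpha\tau}|=e^{-2\pi\alpha v}$, which for $\alpha\neq 0$ blows up exponentially at one end of $\HH$ and takes you out of the class $\mc{P}$; any solution $h$ you build this way will not obviously return to $\mc{P}$ after undoing the twist. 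Knopp's actual argument in \cite{K} is more delicate (a carefully chosen contour integral against a period-$1$ kernel, with the growth estimate done directly). Since you ultimately defer to Knopp anyway, the cleanest fix is to drop the heuristic sketch and cite his Proposition~9 outright, exactly as the paper does.
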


\begin{proof} [\bf Proof of Lemma \ref{taylor}]
Since $C$ is unitary, it is diagonalizable. This implies that there are $p$ by $p$ matrices $P$ and $D$ such that
\[C = PDP^{-1},\]
where $P$ is unitary and $D$ is unitary and diagonal. Furthermore, every diagonal entry of $D$ is of absolute value $1$. Recall that $\{\mbf{e}_1,\cdots, \mbf{e}_p\}$ is the standard basis of $\CC^p$. Let $\mbf{e}'_j = P\mbf{e}_j$ for $1\leq j\leq p$. Then $\{\mbf{e}'_1,\cdots, \mbf{e}'_p\}$ is also a basis of $\CC^p$ and
there are scalar-valued functions $g'_1(\tau), \cdots, g'_p(\tau)$ such that $g(\tau) = \sum_{j=1}^p g'_j(\tau)\mbf{e}'_j$.
Then since $g'_j(\tau)$ is a linear combination of $g_1(\tau),\cdots, g_p(\tau)$, it satisfies the growth condition (\ref{growthconditionforp}).
Then solving the equation (\ref{vectoreqn}) is equivalent to finding scalar-valued functions $f'_1(\tau),\cdots, f'_p(\tau)$ such that $\sum_{j=1}^p f'_j(\tau)\mbf{e}'_j\in \mc{P}$ and
\begin{equation} \label{modifiedeqn}
 D \biggl(\sum_{j=1}^p f'_j(\tau+1)\mbf{e}'_j\biggr) - \sum_{j=1}^p f'_j(\tau)\mbf{e}'_j = \sum_{j=1}^p g'_j(\tau)\mbf{e}'_j.
\end{equation}
If $d_j$ is a $j$th diagonal entry of $D$, then by Proposition 9 in \cite{K} we can find $f'_j(\tau)$ satisfying the growth condition (\ref{growthconditionforp}) such that
\[ d_j f'_j(\tau+1) - f'_j(\tau) = g'_j(\tau),\]
for $1\leq j\leq p$ and $\tau\in\HH$. Then $\sum_{j=1}^p f'_j(\tau)\mbf{e}'_j(\tau)$ is an element of $\mc{P}$, which is a desired solution of (\ref{modifiedeqn}). Finally, we can find scalar-valued functions $f_1(\tau),\cdots f_p(\tau)$ satisfying (\ref{growthconditionforp}) such that
\[\sum_{j=1}^p f_j(\tau)\mbf{e}_j = \sum_{j=1}^p f'_j(\tau)\mbf{e}'_j.\]
This completes the proof.
\end{proof}

With this lemma, we can prove that every cocycle in $\mc{P}$ is parabolic.

\begin{thm} \label{parabolicity}
Let $k\in \RR$ and $\chi$ a multiplier system of weight $-k$. Suppose that $\rho$ is a $p$-dimensional unitary representation. Then
\[H^1_{-k,\chi,\rho}(\Gamma,\mc{P}) = \tilde{H}^1_{-k,\chi,\rho}(\Gamma,\mc{P}).\]
\end{thm}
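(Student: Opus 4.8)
The plan is to show that the natural inclusion of parabolic cocycles into cocycles induces an isomorphism on cohomology. Since coboundaries are automatically parabolic cocycles and the parabolic cohomology group is by definition the quotient of parabolic cocycles by coboundaries, it suffices to prove that \emph{every} cocycle $\{p_\gamma \mid \gamma\in\Gamma\}$ of elements of $\mc{P}$ (of weight $-k$, multiplier system $\chi$ and type $\rho$) is parabolic; that is, for each parabolic generator $Q_i$, $0\leq i\leq t$, there is a $Q_i(\tau)\in\mc{P}$ with $p_{Q_i}(\tau) = (Q_i|_{-k,\chi,\rho}Q_i)(\tau) - Q_i(\tau)$, and then by the cocycle relation (\ref{cocyclevvm}) the same holds for every power of $Q_i$, hence for every element of the corresponding parabolic class.

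First I would reduce to a single cusp. Fix a parabolic generator $Q_i$ and conjugate it to $\sm 1&\lambda_i\\0&1\esm$ by $A_i$ as in the excerpt (for $i=0$ this is trivial); after rescaling the variable $\tau\mapsto\tau/\lambda_i$ the problem becomes: given $g(\tau)=p_{Q_i}(\tau)\in\mc{P}$, find $f(\tau)\in\mc{P}$ solving the twisted difference equation
\[
 C\,f(\tau+1) - f(\tau) = g(\tau),\qquad \tau\in\HH,
\]
where $C = \chi(Q_i)^{-1}\rho^{-1}(Q_i)$ is the unitary matrix appearing in the slash operator evaluated on the upper-triangular representative (note $c=0$ there, so the automorphy factor $(c\tau+d)^{-k}=1$ disappears and the action on a $\mc{P}$-element reduces to translation composed with the constant unitary matrix $C$). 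This is exactly the content of Lemma \ref{taylor}: since $C$ is unitary it is unitarily diagonalizable, $C=PDP^{-1}$ with $D=\mathrm{diag}(d_1,\dots,d_p)$, $|d_j|=1$; expanding $g$ in the basis $\{P\mbf{e}_j\}$ turns the system into $p$ scalar equations $d_j f'_j(\tau+1)-f'_j(\tau)=g'_j(\tau)$, each solved inside the scalar space $\mc{P}$ by Proposition~9 of \cite{K}, and the resulting $f'=\sum f'_j\mbf{e}'_j$ lies in $\mc{P}$ because the growth condition (\ref{growthconditionforp}) is preserved under the invertible linear change of basis. Transporting $f$ back through $A_i$ and the rescaling — which multiplies $\tau$ by a fixed positive constant and conjugates by a fixed matrix, again preserving the growth bound in (\ref{growthconditionforp}) — yields the desired $Q_i(\tau)\in\mc{P}$.

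Having parabolicity at each of the finitely many inequivalent cusps $q_0=\infty, q_1,\dots,q_t$, I would observe that this is precisely the defining condition for $\{p_\gamma\}$ to be a parabolic cocycle, so the cocycle groups coincide; quotienting by the common subgroup of coboundaries gives $H^1_{-k,\chi,\rho}(\Gamma,\mc{P}) = \tilde{H}^1_{-k,\chi,\rho}(\Gamma,\mc{P})$. The only genuinely substantive input is the scalar solvability result of Knopp (Proposition~9 of \cite{K}), which produces a solution of $d\,f(\tau+1)-f(\tau)=g(\tau)$ within the growth class $\mc{P}$ rather than merely a holomorphic solution; everything else is bookkeeping. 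Accordingly, the main obstacle — already dispatched by Lemma~\ref{taylor} — is verifying that the unitary diagonalization does not destroy membership in $\mc{P}$, i.e. that the constants $K,\rho,\sigma$ can be chosen uniformly after the linear change of basis; this is immediate since each new component is a finite $\CC$-linear combination of the old ones.
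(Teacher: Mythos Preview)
Your proposal is correct and follows essentially the same route as the paper: reduce to solving $(f|_{-k,\chi,\rho}Q)(\tau)-f(\tau)=g(\tau)$ for each parabolic $Q$, handle the translation case via Lemma~\ref{taylor} (diagonalize the unitary matrix and invoke Knopp's Proposition~9 componentwise), and treat a general parabolic $Q=A^{-1}\sm 1&\lambda\\0&1\esm A$ by conjugation. One small imprecision: for a finite cusp the passage through $A_i$ is a genuine M\"obius transformation with an automorphy factor $(\gamma\tau+\delta)^k$, not merely ``multiplying $\tau$ by a fixed positive constant''; the paper makes this explicit by setting $f(\tau)=(\gamma\tau+\delta)^k\psi(A\tau)$ and checking that the resulting scalar constant $\epsilon=\bar\chi(Q)(c\tau+d)^k(\gamma Q\tau+\delta)^k(\gamma\tau+\delta)^{-k}$ has $|\epsilon|=1$, so the coefficient matrix remains unitary --- but your conclusion that $\mc{P}$ is preserved and Lemma~\ref{taylor} applies is correct.
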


\begin{proof} [\bf Proof of Theorem \ref{parabolicity}]
By definition, it is enough to show that for a parabolic element $Q\in\Gamma$ and a vector-valued function $g(\tau)\in \mc{P}$, there exists $f(\tau)\in \mc{P}$ such that
\begin{equation} \label{claimeqn}
(f|_{-k,\chi,\rho}Q)(\tau) - f(\tau) = g(\tau).
\end{equation}

First suppose  that $Q$ is a translation, that is, $Q = \sm 1&\lambda\\0&1\esm,\ \lambda>0$.
Put $\varphi(\tau) := g(\lambda\tau)$. Then $\varphi(\tau)\in\mc{P}$ and we may apply Lemma \ref{taylor} to conclude that there exists $\psi(\tau)\in\mc{P}$ such that
\[\bar{\chi}(Q)\rho(Q)^{-1}\psi(\tau+1) - \psi(\tau) = \varphi(\tau).\]
If we put $f(\tau) := \psi(\tau/\lambda)$, then $f(\tau)\in\mc{P}$ and
\begin{eqnarray*}
(f|_{-k,\chi,\rho}Q)(\tau) - f(\tau) &=& \bar{\chi}(Q)\rho(Q)^{-1}f(\tau+\lambda) - f(\tau)\\
&=&\bar{\chi}(Q)\rho(Q)^{-1} \psi(\tau/\lambda+1) - \psi(\tau/\lambda) = \varphi(\tau/\lambda) = g(\tau).
\end{eqnarray*}
Thus (\ref{claimeqn}) has a solution if $Q$ is a translation.

Next, if $Q\in\Gamma$ is parabolic, but not a translation, then $Q$ can be written in the form $Q = A^{-1}\sm 1&\lambda\\0&1\esm A$, $\lambda>0$, where $A \in \SL(2,\RR)$ such that $A q = \infty$. Here, $q$ is a parabolic point fixed by $Q$. Suppose that $Q = \sm *&*\\ c&d\esm,\ A = \sm *&*\\ \gamma&\delta\esm$ and put
\[\varphi(\tau) := (\gamma A^{-1}\tau+\delta)^{-k} g(A^{-1}\tau),\]
and
\[\epsilon = \bar{\chi}(Q)(c\tau+d)^k(\gamma Q\tau+\delta)^k(\gamma\tau+\delta)^{-k}.\]
Since $(c\tau+d)(\gamma Q\tau+\delta) = \gamma\tau+\delta$, we see that $|\epsilon| = 1$.
One can check that $\varphi(\tau)\in\mc{P}$ and hence by the previous case there exists $\psi(\tau)\in\mc{P}$ such that
\[\epsilon\rho(Q)^{-1}\psi(\tau+\lambda) - \psi(\tau) = \varphi(\tau).\]
If we put $f(\tau) := (\gamma\tau+\delta)^k\psi(A\tau)$, then $f(\tau)\in\mc{P}$ and we see that
\begin{eqnarray*}
(f|_{-k,\chi,\rho}Q)(\tau) - f(\tau) &=& \bar{\chi}(Q)\rho(Q)^{-1}(c\tau+d)^kf(Q\tau) - f(\tau)\\
&=& \bar{\chi}(Q)\rho(Q)^{-1}(c\tau+d)^k(\gamma Q\tau+\delta)^k\psi(AQ\tau) - f(\tau)\\
&=& \epsilon\rho(Q)^{-1}(\gamma\tau+\delta)^k \psi(A\tau+\lambda) - (\gamma\tau+\delta)^k\psi(A\tau)\\
&=& (\gamma\tau+\delta)^k\varphi(A\tau)\\
&=& g(\tau).
\end{eqnarray*}
Here, we used that $AQ = \sm 1&\lambda\\0&1\esm A$ so that $AQ\tau = A\tau + \lambda$.
Thus (\ref{claimeqn}) has a solution $f(\tau)\in\mc{P}$ for any parabolic $Q\in\Gamma$. This completes the proof.
\end{proof}

\subsection{The Eichler cohomology theorem for vector-valued modular forms} \label{section3.5}
In this subsection we state  the Eichler cohomology theorem for vector-valued modular forms.
The Eichler cohomology theorem states that there is an isomorphism between the cohomology group of $\Gamma$ and the space of vector-valued cusp forms. To define a map from vector-valued cusp forms to cocycles in $\mc{P}$ first we prove the following lemma.

\begin{lem} \label{lemforinj} Let $g(\tau)\in S_{k+2,\bar{\chi},\bar{\rho}}(\Gamma)$ and put
\begin{equation} \label{automorphicintegral}
G(\tau) := \biggl[\int_{i\infty}^\tau g(w)(w-\bar{\tau})^kdw\biggr]^-,
\end{equation}
where $\tau\in\HH$ and $[\ ]^-$ indicates the complex conjugate of the function inside $[\ ]$. Then
\[(G|_{-k,\chi,\rho}\gamma)(\tau) - G(\tau) = \biggl[\int^{i\infty}_{\gamma^{-1}(i\infty)}g(w)(w-\bar{\tau})^kdw\biggr]^-\]
for all $\gamma\in\Gamma$. Moreover, if for each $\gamma\in\Gamma$ we set
\[g_\gamma(\tau) :=  \biggl[\int^{i\infty}_{\gamma^{-1}(i\infty)}g(w)(w-\bar{\tau})^kdw\biggr]^-,\]
then the collection $\{g_\gamma|\ \gamma\in\Gamma\}$ is a cocycle in $\mc{P}$.
\end{lem}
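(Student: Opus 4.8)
The plan is to reproduce, in the vector-valued and real-weight setting, the classical Eichler--Shimura period computation in the complex-conjugate form introduced by Knopp. Throughout, the fractional powers are taken in the principal branch, which is legitimate here because $w-\bar\tau$ has imaginary part $\Im w+\Im\tau>0$, so it never leaves $\HH$ and in particular avoids the cut $(-\infty,0]$; the same observation makes $\overline{(w-\bar\tau)^k}=(\bar w-\tau)^k$ unambiguous. First I would record that the integral defining $G(\tau)$ converges: since each component of $g\in S_{k+2,\bar\chi,\bar\rho}(\Gamma)$ decays exponentially as $w$ approaches any cusp, while $(w-\bar\tau)^k$ grows only polynomially along the path from $i\infty$ to $\tau$, the integral is absolutely convergent. (Note that $G$ itself need not be holomorphic on $\HH$, since its integrand involves $\bar\tau$ and its upper endpoint involves $\tau$; only the period functions $g_\gamma$ below will be holomorphic, and this is already consistent with the definition of a vector-valued Eichler integral in Section~\ref{section3.2}.)

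For the transformation law, fix $\gamma=\sm a&b\\c&d\esm\in\Gamma$ and substitute $w\mapsto\gamma w$ in the integral defining $G(\gamma\tau)$; the new endpoints are $\gamma^{-1}(i\infty)$ and $\tau$. Here I would use the two identities
\[\gamma w-\overline{\gamma\tau}=\frac{w-\bar\tau}{(cw+d)\,\overline{c\tau+d}},\qquad g(\gamma w)=\bar\chi(\gamma)(cw+d)^{k+2}\bar\rho(\gamma)g(w),\]
the first a direct consequence of $ad-bc=1$ and the second the modularity of the cusp form $g$; together with the Jacobian factor $(cw+d)^{-2}$ coming from $d(\gamma w)$, all powers of $(cw+d)$ cancel, leaving only $\overline{c\tau+d}^{\,-k}$ together with $\bar\chi(\gamma)$ and $\bar\rho(\gamma)$. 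Taking the complex conjugate and using the branch conventions attached to the multiplier system $\chi$, one obtains
\[G(\gamma\tau)=\chi(\gamma)(c\tau+d)^{-k}\rho(\gamma)\Bigl[\int_{\gamma^{-1}(i\infty)}^{\tau}g(w)(w-\bar\tau)^k\,dw\Bigr]^-,\]
hence, by the definition of the slash operator of weight $-k$, $(G|_{-k,\chi,\rho}\gamma)(\tau)=\bigl[\int_{\gamma^{-1}(i\infty)}^{\tau}g(w)(w-\bar\tau)^k\,dw\bigr]^-$. Subtracting $G(\tau)$ and concatenating the two integration paths through $\tau$ yields exactly the asserted formula for $(G|_{-k,\chi,\rho}\gamma)(\tau)-G(\tau)$.

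I expect the main obstacle to be the bookkeeping of the fractional powers in this step: the identity $(\gamma w-\overline{\gamma\tau})^k=(w-\bar\tau)^k(cw+d)^{-k}\overline{c\tau+d}^{\,-k}$ holds a priori only up to a unimodular constant depending on $\gamma$, and one must verify that this constant is precisely the one that renders the computation above an exact equality, i.e.\ that it is absorbed correctly into $\bar\chi(\gamma)$. This uses the consistency condition~(2) in the definition of a multiplier system (applied to $\chi$ of weight $-k$, equivalently $\bar\chi$ of weight $k+2$) and is most cleanly checked by fixing all branches by continuity from the region $\Im\tau\to+\infty$, where $c\tau+d$ and $w-\bar\tau$ stay in a half-plane on which the principal branch is single-valued.

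Finally, to see that $\{g_\gamma\mid\gamma\in\Gamma\}$ is a cocycle in $\mc{P}$: the cocycle relation~(\ref{cocyclevvm}) is automatic, because $g_\gamma=(G|_{-k,\chi,\rho}\gamma)-G$ and $(\,\cdot\,)|_{-k,\chi,\rho}\gamma$ is a right action, so $g_{\gamma_1\gamma_2}=(g_{\gamma_1}|_{-k,\chi,\rho}\gamma_2)+g_{\gamma_2}$ follows formally. It remains to show each $g_\gamma\in\mc{P}$. Holomorphy on $\HH$ holds because in the representation $g_\gamma(\tau)=\bigl[\int_{\gamma^{-1}(i\infty)}^{i\infty}g(w)(w-\bar\tau)^k\,dw\bigr]^-$ the path is independent of $\tau$ and the integrand is holomorphic in $\bar\tau$ along it, so $g_\gamma$ is holomorphic in $\tau$ after conjugation. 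For the growth condition~(\ref{growthconditionforp}) I would estimate
\[\bigl|(g_\gamma)_j(\tau)\bigr|\le\int_{\gamma^{-1}(i\infty)}^{i\infty}\sum_{l}|g_l(w)|\,|w-\bar\tau|^{k}\,|dw|,\]
noting that $\gamma^{-1}(i\infty)$ is again a cusp of $\Gamma$, so $|g_l(w)|$ decays exponentially at both ends of the path, while $|w-\bar\tau|\le|w|+|\tau|$ and $|w-\bar\tau|\ge\Im w+v\ge v$. Splitting $|w-\bar\tau|^k$ according to the sign of $k$, the contribution of $|w|$ is absorbed by the exponential decay of $g$ and the remaining dependence on $\tau$ is bounded by $C(|\tau|^{\max(k,0)}+v^{-\max(-k,0)})$, so that $|(g_\gamma)_j(\tau)|<K(|\tau|^\rho+v^{-\sigma})$ for suitably large positive $\rho$ and $\sigma$. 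Hence $g_\gamma\in\mc{P}$ for every $\gamma$, which completes the proof.
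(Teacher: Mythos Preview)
Your proof is correct and follows essentially the same approach as the paper's: the substitution $w\mapsto\gamma w$, the identity $\gamma w-\overline{\gamma\tau}=(w-\bar\tau)/((cw+d)(c\bar\tau+d))$, and the modularity of $g$ are exactly the ingredients the paper uses, and the cocycle relation is deduced in the same formal way from $g_\gamma=(G|_{-k,\chi,\rho}\gamma)-G$. You supply more detail than the paper does---in particular the convergence of the defining integral, the branch bookkeeping for the fractional powers, and the verification that each $g_\gamma$ is holomorphic and satisfies the growth condition (\ref{growthconditionforp})---all of which the paper omits with the phrase ``one can check,'' so your version is a strict elaboration of the same argument rather than a different one.
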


\begin{proof}[\bf Proof of Lemma \ref{lemforinj}]
This is essentially Lemma 2.2 of \cite{KM2}.
First we have
\begin{eqnarray*}
(G|_{-k,\chi,\rho}\gamma)(\tau) - G(\tau) &=& \bar{\chi}(\gamma)\rho^{-1}(\gamma)(c\tau+d)^kG(\gamma\tau) - G(\tau)\\
&=& \biggl[\chi(\gamma)\bar{\rho}^{-1}(\gamma)(c\bar{\tau}+d)^k\int_{i\infty}^{\gamma\tau}g(w)(w-\gamma\bar{\tau})^kdw - \int_{i\infty}^\tau g(w)(w-\bar{\tau})^kdw\biggr]^-
\end{eqnarray*}
for $\gamma = \sm a&b\\c&d\esm\in\Gamma$. Then using the substitution $w\to \gamma w$ in the first integral and the fact that $g(\tau)\in S_{k+2,\bar{\chi},\bar{\rho}}(\Gamma)$, we obtain that
\begin{eqnarray*}
&& (G|_{-k,\chi,\rho}\gamma)(\tau) - G(\tau) \\
&&= \biggl[\chi(\gamma)\bar{\rho}^{-1}(\gamma)(c\bar{\tau}+d)^k\int_{\gamma^{-1}(i\infty)}^\tau g(\gamma w)(\gamma w-\gamma\bar{\tau})^k (cw+d)^{-2}dw -\int_{i\infty}^\tau g(w)(w-\bar{\tau})^kdw\biggr]^-\\
&&= \biggl[\int_{\gamma^{-1}(i\infty)}^\tau (c\bar{\tau}+d)^k(cw+d)^kg(w)(\gamma w-\gamma\bar{\tau})^kdw - \int_{i\infty}^\tau g(w)(w-\bar{\tau})^kdw\biggr]^-\\
&&= \biggl[\int_{\gamma^{-1}(i\infty)}^\tau (c\bar{\tau}+d)^k(cw+d)^kg(w)\biggl(\frac{w-\bar{\tau}}{(cw+d)(c\bar{\tau}+d)}\biggr)^kdw - \int_{i\infty}^\tau g(w)(w-\bar{\tau})^k dw\biggr]^-.
\end{eqnarray*}
Note that
\[\biggl(\frac{w-\bar{\tau}}{(cw+d)(c\bar{\tau}+d)}\biggr)^k = \frac{(w-\bar{\tau})^k}{(cw+d)^k(c\bar{\tau}+d)^k}.\]
Hence we conclude that
\begin{eqnarray*}
 (G|_{-k,\chi,\rho}\gamma)(\tau) - G(\tau) &=& \biggl[\int_{\gamma^{-1}(i\infty)}^\tau g(w)(w-\bar{\tau})^kdw - \int_{i\infty}^\tau g(w)(w-\bar{\tau})^kdw\biggr]^-\\
&=& \biggl[\int_{\gamma^{-1}(i\infty)}^{i\infty} g(w)(w-\bar{\tau})^kdw\biggr]^-.
\end{eqnarray*}
Since $g_\gamma(\tau) = (G|_{-k,\chi,\rho}\gamma)(\tau) - G(\tau)$, one can check that $\{g_\gamma|\ \gamma\in\Gamma\}$ is a cocycle in $\mc{P}$.
\end{proof}

Now we define a mapping
\[\eta: S_{k+2,\bar{\chi},\bar{\rho}}(\Gamma) \to H^1_{-k,\chi,\rho}(\Gamma,\mc{P})\]
by $\eta(g) = <g_\gamma|\ \gamma\in\Gamma>$, where $<g_\gamma|\ \gamma\in\Gamma>$ is the class determined by the cocycle
$\{g_\gamma|\ \gamma\in\Gamma\}$ constructed in the previous lemma. One can see that $\eta$ is a linear mapping. In this subsection, we just state the Eichler cohomology theorem for vector-valued modular forms.

\begin{thm} \label{mainvvmreal} Let $\rho$ be a $p$-dimensional unitary representation. Then for any real number $k$ and multiplier system $\chi$ of weight $-k$, we have an isomorphism
\[ \eta: S_{k+2,\bar{\chi},\bar{\rho}}(\Gamma) \cong H^1_{-k,\chi,\rho}(\Gamma,\mc{P}).\]
\end{thm}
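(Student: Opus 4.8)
The plan is to prove that the linear map $\eta$ of Theorem~\ref{mainvvmreal} is bijective; it has already been checked in Lemma~\ref{lemforinj} that $\eta$ is well defined, and by Theorem~\ref{parabolicity} its target $H^1_{-k,\chi,\rho}(\Gamma,\mc{P})$ coincides with $\tilde H^1_{-k,\chi,\rho}(\Gamma,\mc{P})$. Injectivity will come from a Stokes-theorem computation on the Ford fundamental region $\mc{R}$ of~\eqref{ford}, and surjectivity from Theorem~\ref{converse} (existence of a holomorphic vector-valued Eichler integral with prescribed parabolic periods) together with Theorem~\ref{parabolicity}.

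For injectivity, suppose $g=\sum_{j=1}^p g_j\mbf{e}_j\in S_{k+2,\bar\chi,\bar\rho}(\Gamma)$ satisfies $\eta(g)=0$, so the cocycle $\{g_\gamma\}$ of Lemma~\ref{lemforinj} is a coboundary: $g_\gamma=(p|_{-k,\chi,\rho}\gamma)-p$ for some $p\in\mc{P}$. With $G$ as in~\eqref{automorphicintegral}, set $\phi:=G-p$; then $\phi|_{-k,\chi,\rho}\gamma=\phi$ for all $\gamma\in\Gamma$, and $\phi$ is real-analytic with at most polynomial growth at every cusp (since $G$ and $p$ are). Differentiating under the integral sign yields $\tfrac{\partial}{\partial\bar\tau}G_j(\tau)=\alpha\,v^{k}\overline{g_j(\tau)}$ for a nonzero constant $\alpha$, while $\tfrac{\partial}{\partial\bar\tau}p_j=0$, so $\tfrac{\partial}{\partial\bar\tau}\phi_j=\alpha\,v^{k}\overline{g_j}$. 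Put $H(\tau):=\sum_{j=1}^p g_j(\tau)\phi_j(\tau)$. Using that $g$ has weight $k+2$ and type $\bar\rho$, that $\phi$ has weight $-k$ and type $\rho$, and that $\rho$ is unitary (so $\overline{\rho(\gamma)}^{\,t}\rho(\gamma)=I$), one finds $H(\gamma\tau)=(c\tau+d)^{2}H(\tau)$ for $\gamma=\sm a&b\\c&d\esm\in\Gamma$, i.e.\ the $1$-form $H(\tau)\,d\tau$ is $\Gamma$-invariant. Since $\tfrac{\partial}{\partial\bar\tau}H=\alpha\,v^{k}\sum_j|g_j|^{2}$, Stokes' theorem gives
\[
\alpha\int_{\mc{R}}v^{k}\Big(\sum_j|g_j(\tau)|^{2}\Big)\,du\,dv=\int_{\mc{R}}\frac{\partial H}{\partial\bar\tau}\,du\,dv=\frac{1}{2i}\int_{\partial\mc{R}}H(\tau)\,d\tau .
\]
The sides of $\partial\mc{R}$ paired by elements of $\Gamma$ cancel because $H\,d\tau$ is invariant, and the horocyclic arcs near the cusps contribute nothing in the limit because $g$ is cuspidal (exponentially small there) while $\phi$ has only polynomial growth. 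As the left-hand side is a nonzero multiple of the Petersson norm of $g$, we get $g=0$; thus $\eta$ is injective.

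For surjectivity, let $\mathfrak{c}\in H^1_{-k,\chi,\rho}(\Gamma,\mc{P})$. By Theorem~\ref{parabolicity} we may represent $\mathfrak{c}$ by a parabolic cocycle $\{g_\gamma\}$ in $\mc{P}$, and Theorem~\ref{converse} then gives a holomorphic vector-valued $\Phi$ on $\HH$ with $(\Phi|_{-k,\chi,\rho}\gamma)-\Phi=g_\gamma$ for all $\gamma$. It remains to produce $g\in S_{k+2,\bar\chi,\bar\rho}(\Gamma)$ with $\eta(g)=\mathfrak{c}$; equivalently, $\Phi-G_g$ should differ from an element of $\mc{P}$ by a function transforming like a weight $-k$ vector-valued modular form. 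The candidate $g$ is read off from the left-finite Fourier expansions of $\Phi$ at the cusps (Remark~\ref{leftexpansion}): the ``nonnegative-frequency'' part at each cusp assembles into a holomorphic vector-valued form of weight $k+2$, multiplier $\bar\chi$ and type $\bar\rho$, which the growth condition defining $\mc{P}$ forces to be cuspidal. Comparing the Fourier expansions of $G_g$ and $\Phi$ term by term — here the real-weight substitute for Bol's identity enters through the relation $\tfrac{\partial}{\partial\bar\tau}G_g=\alpha\,v^{k}\overline{g}$ found above — one subtracts an explicit element of $\mc{P}$ so that the remainder is holomorphic and transforms with weight $-k$, and a final application of Theorem~\ref{converse} (equivalently Lemma~\ref{genpet}) removes this residual modular piece. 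I expect this last step to be the main obstacle: since $k$ need not be an integer, the classical identity $D^{k+1}(\text{Eichler integral})=(\text{cusp form})$ is unavailable, so the passage from the abstract holomorphic Eichler integral back to a genuine cusp form has to be carried out through the Fourier-analytic comparison above — which is precisely what the generalized Poincar\'e series of Section~\ref{section3.3} and the convergence result Theorem~\ref{convergepoin} were prepared for.
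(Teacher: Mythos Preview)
Your injectivity argument is essentially the paper's: the Stokes-theorem computation with the $\Gamma$-invariant $1$-form $\sum_j g_j(G_j-p_j)\,d\tau$ is exactly what is done in Section~\ref{section3.6}, and it is correct.

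The surjectivity argument, however, has a genuine gap. You propose to ``read off'' a cusp form $g\in S_{k+2,\bar\chi,\bar\rho}(\Gamma)$ directly from the Fourier expansion of the holomorphic Eichler integral $\Phi$ of weight $-k$. This does not work: the nonnegative-frequency part of $\Phi$ at a cusp is still an object of weight $-k$, not $k+2$, and there is no mechanism that converts it into a modular form of the dual weight. In the integer-weight case this passage is effected by Bol's identity $D^{k+1}$, but as you yourself note, for real $k$ that identity is unavailable, and the replacement you invoke---the relation $\partial_{\bar\tau}G_g=\alpha v^k\overline{g}$---goes in the wrong direction (it recovers $g$ from the \emph{non-holomorphic} integral $G_g$, not from the holomorphic $\Phi$). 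The subsequent ``Fourier-analytic comparison'' and ``final application of Theorem~\ref{converse}'' are not specified, and I do not see how to make them precise.

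The paper's route to surjectivity is quite different and does not attempt to construct the preimage cusp form at all. Instead it is a dimension count: one shows $\dim\tilde H^1_{-k,\chi,\rho}(\Gamma,\mc{P})\le d:=\dim S_{k+2,\bar\chi,\bar\rho}(\Gamma)$, which together with injectivity gives the isomorphism. The key new ingredients (Section~\ref{section3.7}) are Petersson's principal-parts criterion (Theorem~\ref{petersson}) and its consequence Proposition~\ref{related2}, which produce elements of $M^!_{-k,\chi,\rho}(\Gamma)$ matching the principal parts of $\Phi$ at every cusp except for at most $d$ controlled terms at $\infty$. Subtracting these from the Eichler integrals reduces any $d+1$ cohomology classes to integrals whose only possible poles live in a fixed $d$-dimensional space of principal parts, and a determinant argument then forces a linear dependence modulo coboundaries. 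Your proposal does not invoke Theorem~\ref{petersson} or Proposition~\ref{related2}, and without them the surjectivity argument is incomplete.
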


Combining Theorem \ref{parabolicity} and Theorem \ref{mainvvmreal}, we have the following result.

\begin{thm} \label{mainvvmreal2}
Let $\rho$ be a $p$-dimensional unitary representation. Then for any real number $k$ and multiplier system $\chi$ of weight $-k$, we have an isomorphism
\[  S_{k+2,\bar{\chi},\bar{\rho}}(\Gamma) \cong \tilde{H}^1_{-k,\chi,\rho}(\Gamma,\mc{P}).\]
\end{thm}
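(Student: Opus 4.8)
The plan is to obtain Theorem \ref{mainvvmreal2} as a direct consequence of Theorem \ref{parabolicity} together with Theorem \ref{mainvvmreal}, the only point needing attention being the bookkeeping that the comparison map between $\tilde{H}^1_{-k,\chi,\rho}(\Gamma,\mc{P})$ and $H^1_{-k,\chi,\rho}(\Gamma,\mc{P})$ is an isomorphism compatible with $\eta$.

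First I would record the elementary structural fact that the inclusion of the space of parabolic cocycles into the space of all cocycles in $\mc{P}$ induces a well-defined homomorphism
\[\iota:\tilde{H}^1_{-k,\chi,\rho}(\Gamma,\mc{P})\longrightarrow H^1_{-k,\chi,\rho}(\Gamma,\mc{P}),\]
since, as noted right after the definition of the parabolic cohomology group, every coboundary is a parabolic cocycle, so both groups are quotients of nested spaces of cocycles by the same space of coboundaries. This $\iota$ is injective: a parabolic cocycle that is a coboundary already represents the zero class in $\tilde{H}^1_{-k,\chi,\rho}(\Gamma,\mc{P})$. Theorem \ref{parabolicity} gives the stronger assertion that every cocycle in $\mc{P}$ is a parabolic cocycle, whence $\iota$ is also surjective; therefore $\iota$ is an isomorphism.

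Next I would invoke Theorem \ref{mainvvmreal}, which supplies the isomorphism $\eta:S_{k+2,\bar{\chi},\bar{\rho}}(\Gamma)\cong H^1_{-k,\chi,\rho}(\Gamma,\mc{P})$, and compose with $\iota^{-1}$ to obtain the desired isomorphism $S_{k+2,\bar{\chi},\bar{\rho}}(\Gamma)\cong\tilde{H}^1_{-k,\chi,\rho}(\Gamma,\mc{P})$. To make the statement more transparent one may further remark that, for $g\in S_{k+2,\bar{\chi},\bar{\rho}}(\Gamma)$, the cocycle $\{g_\gamma\mid\gamma\in\Gamma\}$ produced from \eqref{automorphicintegral} in Lemma \ref{lemforinj} is automatically parabolic (by Theorem \ref{parabolicity}), so that $\eta$ already takes values in $\tilde{H}^1_{-k,\chi,\rho}(\Gamma,\mc{P})$ and $\iota$ simply identifies this with $\eta$ itself.

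Since both ingredients have been established in the excerpt, there is essentially no genuine obstacle here; the proof amounts to citing Theorem \ref{parabolicity} and Theorem \ref{mainvvmreal} and composing the two isomorphisms. The only item deserving (minor) care is verifying that $\eta$ factors through $\tilde{H}^1_{-k,\chi,\rho}(\Gamma,\mc{P})$, i.e. that the comparison map $\iota$ is compatible with $\eta$; this is immediate from the equality of the two cohomology groups provided by Theorem \ref{parabolicity}.
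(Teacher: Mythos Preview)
Your proposal is correct and matches the paper's own approach: the paper states Theorem \ref{mainvvmreal2} as an immediate consequence of combining Theorem \ref{parabolicity} with Theorem \ref{mainvvmreal}, which is exactly what you do. Your additional care in spelling out why the inclusion $\iota$ is an isomorphism and why $\eta$ factors through the parabolic cohomology is a harmless elaboration of the same argument.
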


In the following subsections, we will prove Theorem \ref{mainvvmreal} based on the arguments of Knopp and Mawi as in \cite{KM2}.

\medskip

\subsection{Injectivity of $\eta$} \label{section3.6} In this subsection, we prove the injectivity of $\eta$.
To prove that $\eta$ is injective, assume that $g(\tau) \in \Ker\eta$.
It suffices to show that $g\equiv 0$.
Since $g(\tau)\in \Ker\eta$, we see that $\eta(g) = <g_\gamma|\ \gamma\in\Gamma>$ is a coboundary. Thus there exists $p(\tau)\in\mc{P}$ such that
\[g_\gamma(\tau) = (p|_{-k,\chi\rho}\gamma)(\tau) - p(\tau)\]
for all $\gamma\in\Gamma$. With $G(\tau)$ defined as in (\ref{automorphicintegral}), we obtain that
\[((G-p)|_{-k,\chi,\rho}\gamma)(\tau) = (G - p)(\tau)\]
for all $\gamma\in\Gamma$.
Since $p(\tau)$ and $g(\tau)$ are holomorphic, we have
\[\frac{\partial}{\partial\bar{\tau}}(g_j(\tau)G_j(\tau)) = |g_j(\tau)|^2(\bar{\tau}-\tau)^k\]
and
\[\frac{\partial}{\partial\bar{\tau}}(p_j(\tau)g_j(\tau)) = 0\]
for each $j\in\{1,2,\cdots, p\}$. Let $\mc{F}$ be a fundamental region for $\Gamma$ in $\HH$. Then
\begin{equation} \label{stoke}
\int_{\mc{F}}|g_j(\tau)|^2(\bar{\tau}-\tau)^kdudv = \int_{\mc{F}} \frac{\partial}{\partial\bar{\tau}}(g_j(\tau)(G_j(\tau)-p_j(\tau)))dudv,
\end{equation}
where $\tau = u + iv\in\HH$. If we apply the Stoke theorem to the right hand side of (\ref{stoke}) and take a sum, then we have
\begin{equation} \label{stoke2}
\int_{\mc{F}}\sum_{j=1}^p|g_j(\tau)|^2(\bar{\tau}-\tau)^kdudv = \frac{-i}{2} \int_{\partial\mc{F}} \sum_{j=1}^p g_j(\tau)(G_j(\tau)-p_j(\tau))d\tau.
\end{equation}
Note that $\sum_{j=1}^p g_j(\tau)(G_j-p_j)(\tau)$ can be understood as an inner product on $p$-dimensional vectors. Since $\rho$ is a unitary representation, $\rho(\gamma)$ is a unitary matrix for any $\gamma\in\Gamma$ and hence it follows that
\[\sum_{j=1}^p g_j(\tau)(G_j(\tau)-p_j(\tau))d\tau\]
is $\Gamma$-invariant. Using this transformation property we can show that the right hand side of (\ref{stoke2}) is zero, from which we conclude that $g_j\equiv 0$ for each $j\in\{1,\cdots, p\}$. This implies that $\eta$ is injective.

\medskip

\subsection{Petersson's principal parts condition and related results} \label{section3.7}
In this subsection, we state and prove some results which we need to prove that $\eta$ is surjective.
Let $k\in\RR$. Let $\{\varphi_1,\cdots, \varphi_d\}$ be a basis of $S_{k+2,\bar{\chi},\bar{\rho}}(\Gamma)$
whose expansions at the finite cusps $q_i$ are given by
\[\varphi_l(\tau) = (\tau-q_i)^{-k-2}\sum_{j=1}^p \sum_{n\gg-\infty} a_{l,i}(n,j)e^{\frac{-2\pi i(n+\kappa'_{j,i})}{\lambda_{i}(\tau-q_i)}}\mbf{e}_j,\ 1\leq i\leq t,\]
and whose expansion at infinity is given by
\[\varphi_l(\tau) = \sum_{j=1}^p \sum_{n\gg-\infty}a_{l,0}(n,j)e^{2\pi i(n+\kappa'_{j,0})\tau/\lambda_0}\mbf{e}_j,\]
where $\kappa_{j,i},\ 1\leq j\leq p,\ 0\leq i\leq t,$ are non-zero constants given as in (\ref{kappa}) , (\ref{kappafinite}) and $\kappa'_{j,i},\ 1\leq j\leq p,\ 0\leq i\leq t,$ are given by
\begin{equation*}
\kappa'_{j,i} =
\begin{cases}
0 & \text{if $\kappa_{j,i} = 0$},\\
1-\kappa_{j,i} & \text{if $\kappa_{j,i}>0$}.
\end{cases}
\end{equation*}
The following result is a generalization of Petersson's result in \cite[pp. 388-389]{P}.

\begin{thm} \label{petersson} There exists $g(\tau)\in M^!_{-k,\chi,\rho}(\Gamma)$
whose expansion at the finite cusps $q_i$ are given by
\[g(\tau) = (\tau-q_i)^k\sum_{j=1}^p \sum_{n\gg-\infty}b_i(n,j)e^{\frac{-2\pi i(n+\kappa_{j,i})}{\lambda_i(\tau-q_i)}}\mbf{e}_j,\ 1\leq i\leq t,\]
and whose expansion at infinity is given by
\[g(\tau) = \sum_{j=1}^p \sum_{n\gg-\infty}b_0(n,j)e^{2\pi i(n+\kappa_{j,0})\tau/\lambda_0}\mbf{e}_j\]
if and only if
\[\Res(g\varphi_l) := \sum_{i=0}^t \Res((g\varphi_l)_i,q_i)=0,\]
for each $l = 1,\cdots, d$, where $\Res((g\varphi_l)_i,q_i) := \sum_{j=1}^p \sum_{n+\kappa_j<0} b_i(n,j)a_{l,i}(-(\kappa_{j,i}+\kappa'_{j,i})-n,j)$.
\end{thm}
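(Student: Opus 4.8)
The result is the vector-valued, real-weight analogue of Petersson's principal-part theorem, and the plan is to prove the two implications separately. \emph{Necessity} (``only if'') is the residue theorem on a compact Riemann surface. Given $g=\sum_j g_j\mbf{e}_j\in M^!_{-k,\chi,\rho}(\Gamma)$ and a basis element $\varphi_l=\sum_j\varphi_{l,j}\mbf{e}_j$ of $S_{k+2,\bar\chi,\bar\rho}(\Gamma)$, set $F:=\sum_{j=1}^p g_j\varphi_{l,j}$. Because $\chi\bar\chi\equiv 1$ and $\rho$ is unitary, so that $\rho(\gamma)^T\,\overline{\rho(\gamma)}=I$ for every $\gamma$, one checks that $F(\gamma\tau)=(c\tau+d)^2F(\tau)$; thus $F$ is a scalar meromorphic modular form of weight $2$ with trivial multiplier system on $\Gamma$, holomorphic on $\HH$ since $g$ and $\varphi_l$ are. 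Hence $F(\tau)\,d\tau$ is $\Gamma$-invariant and descends to a meromorphic differential $\omega$ on $X:=\overline{\Gamma\backslash\HH}$ whose only poles are at the cusps (weight exactly $2$ makes $\omega$ extend holomorphically over the elliptic fixed points). The vanishing $\sum_{P\in X}\Res_P\omega=0$ reads $\sum_{i=0}^t\Res((g\varphi_l)_i,q_i)=0$; comparing the cusp expansions of $g$ and $\varphi_l$ at $q_i$, and noting that only the finitely many polar coefficients $b_i(n,j)$ of $g$ pair nontrivially with the cusp form $\varphi_l$, identifies the local residue with $\sum_{j=1}^p\sum_{n+\kappa_{j,i}<0}b_i(n,j)a_{l,i}(-(\kappa_{j,i}+\kappa'_{j,i})-n,j)$.

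\emph{Sufficiency} (``if'') is the substantive direction, and I would argue by a dimension count carried out for bounded pole orders. Let $\mc{PP}_{\le N}$ be the finite-dimensional space of admissible principal-part data at all cusps with poles of order at most $N$, and let $\pi_N\colon M^!_{-k,\chi,\rho}(\Gamma)_{\le N}\to\mc{PP}_{\le N}$ be the principal-part map on forms with poles of order at most $N$, so $\ker\pi_N=M_{-k,\chi,\rho}(\Gamma)$. For $N$ large the residue functionals $\ell_1,\dots,\ell_d$ attached to a basis $\varphi_1,\dots,\varphi_d$ of $S_{k+2,\bar\chi,\bar\rho}(\Gamma)$ are linearly independent on $\mc{PP}_{\le N}$ (a nonzero cusp form cannot be orthogonal, in the residue pairing, to every principal part of bounded order), so $W_{\le N}:=\bigcap_l\ker\ell_l$ has codimension $d$ in $\mc{PP}_{\le N}$; by necessity $\operatorname{im}\pi_N\subseteq W_{\le N}$, and it remains to prove equality, i.e. $\dim M^!_{-k,\chi,\rho}(\Gamma)_{\le N}-\dim M_{-k,\chi,\rho}(\Gamma)=\dim\mc{PP}_{\le N}-d$. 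To produce enough weakly holomorphic forms I would use, exactly as in the proof of Lemma \ref{genpet}, vector-valued Knopp--Mason Poincar\'e series attached to each cusp: for a positive integer $n$ with $-k+12n>2$ the series converges by the argument of Theorem \ref{convergepoin} and lies in $M^!_{-k+12n,\chi,\rho}(\Gamma)$, so its quotient by $\Delta^n$ lies in $M^!_{-k,\chi,\rho}(\Gamma)$ with principal parts triangular in the polar coefficients; the matrix of residue pairings between their principal parts and $\varphi_1,\dots,\varphi_d$ is evaluated by the classical formula in terms of the Fourier coefficients of the $\varphi_l$, and has rank exactly $d$ because the Poincar\'e series are complete in $S_{k+2,\bar\chi,\bar\rho}(\Gamma)$. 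Letting $N\to\infty$ gives $\operatorname{im}\pi=\bigcap_l\ker\ell_l$, which is the assertion.

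The main obstacle is this last step: showing there is no obstruction to realizing a principal part beyond those imposed by cusp forms, equivalently that the codimension of the realizable principal parts is exactly $d$ and not larger. Conceptually this is Serre duality on $X$ --- the obstruction lives in an $H^1$ of the relevant line bundle, dual to the $H^0$ that computes $S_{k+2,\bar\chi,\bar\rho}(\Gamma)$ --- but the arbitrary real weight, the multiplier system, and the vector-valued coefficient module prevent a direct appeal to Riemann--Roch, so one must instead carry out Petersson's explicit computation with Poincar\'e series, carefully tracking the parameters $\kappa_{j,i}$, $\kappa'_{j,i}$ and the cusp widths $\lambda_i$ and invoking the completeness of the Poincar\'e series in the cusp-form space, which is the genuinely nontrivial input.
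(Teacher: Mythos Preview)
Your proposal is correct and follows precisely the line of argument the paper invokes: the paper's entire proof is the single sentence ``This follows from the same argument in Theorem 3.1 in [B2]'' (Borcherds), and what you have written is exactly a fleshed-out version of that argument. Your residue-theorem proof of necessity and your Poincar\'e-series/dimension-count approach to sufficiency, with the completeness of Poincar\'e series in $S_{k+2,\bar\chi,\bar\rho}(\Gamma)$ singled out as the nontrivial input, are the content of Borcherds' proof adapted to the present vector-valued, real-weight setting; you have in fact supplied more detail than the paper itself.
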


\begin{proof} [\bf Proof of Theorem \ref{petersson}] This follows from the same argument in Theorem 3.1 in \cite{B2}.
\end{proof}

Since $\dim S_{k+2,\bar{\chi},\bar{\rho}}(\Gamma) = d$,
there exist pairs of natural numbers
\begin{equation} \label{related1}
(s_1,t_1),\cdots, (s_d,t_d)
\end{equation}
 such that a $d$ by $d$ matrix $C = [c_{lj}]$, where $c_{lj} = a_{l,0}(s_j,t_j)$, is nonsingular. Using this, we can prove the following result, which plays an important role in the proof of the surjectivity of $\eta$.

\begin{prop} \label{related2}
Let $k\in\RR$.
Given poles of prescribed principal parts $P_i$ at the cusp $q_i$ for $i = 0,\cdots, t$, there exists a vector-valued weakly holomorphic modular form  $g(\tau)$ in $M^!_{-k,\chi,\rho}(\Gamma)$ whose principal part at the finite cusp $q_i,\ 1\leq i\leq t$, is $P_i$ and whose principal part at the infinite cusp $q_0$ differs from $P_0$ by at most $d$ terms, where $d = \dim S_{k+2,\bar{\chi},\bar{\rho}}(\Gamma)$.
\end{prop}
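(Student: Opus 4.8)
The plan is to derive Proposition \ref{related2} from the Petersson-type duality of Theorem \ref{petersson}, which translates the existence of a weakly holomorphic form with prescribed principal parts into the vanishing of $d$ explicit linear functionals, and then to absorb the resulting obstruction into at most $d$ extra Fourier terms at $i\infty$ using the nonsingularity of the matrix $C$ from (\ref{related1}).

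First I would set up the linear algebra. By Theorem \ref{petersson}, a collection of principal‑part data $\{b_i(n,j):\ n+\kappa_{j,i}<0,\ 0\le i\le t,\ 1\le j\le p\}$ occurs as the principal part of some $g\in M^!_{-k,\chi,\rho}(\Gamma)$ if and only if $R_l:=\sum_{i=0}^t\Res((g\varphi_l)_i,q_i)=0$ for $l=1,\dots,d$. By the formula in Theorem \ref{petersson}, each $R_l$ is an explicit $\CC$-linear functional of the principal‑part coefficients only, so plugging in the prescribed principal parts $P_0,\dots,P_t$ produces fixed numbers $R_1,\dots,R_d$. If they all vanish we are already done with no modification; the content is to correct $P_0$ so that they do.

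Next I would use the pairs $(s_1,t_1),\dots,(s_d,t_d)$ of (\ref{related1}). The idea is to add to $P_0$ the $d$ terms $\sum_{m=1}^d\beta_m e^{2\pi i(n_m+\kappa_{t_m,0})\tau/\lambda_0}\mbf{e}_{t_m}$, where $n_m$ is the unique integer with $-(\kappa_{t_m,0}+\kappa'_{t_m,0})-n_m=s_m$. A short check with the definition of $\kappa'$, using that $\varphi_l$ is a cusp form — which forces $a_{l,0}(0,t_m)=0$, hence $s_m\ge 1$, whenever $\kappa_{t_m,0}=0$, since otherwise the $m$-th column of $C$ would vanish — shows $n_m+\kappa_{t_m,0}<0$, so each added term is genuinely a principal‑part term at $i\infty$; distinctness of the $(s_m,t_m)$ gives distinctness of the $(n_m,t_m)$. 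Feeding this modified $P_0$ into the residue functionals changes $R_l$ into $R_l+\sum_{m=1}^d a_{l,0}(s_m,t_m)\beta_m=R_l+\sum_{m=1}^d c_{lm}\beta_m$. Since $C=[c_{lm}]$ is nonsingular, the system $\sum_m c_{lm}\beta_m=-R_l$ has a unique solution $\beta=(\beta_1,\dots,\beta_d)$. With this choice the modified principal‑part data at all cusps lies in the common kernel of the $R_l$, so Theorem \ref{petersson} yields $g\in M^!_{-k,\chi,\rho}(\Gamma)$ whose principal part is exactly $P_i$ at each finite cusp $q_i$ and whose principal part at $q_0$ is $P_0$ changed in at most $d$ coefficients, which is the assertion.

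I do not expect a serious obstacle here: the analytic work is entirely packaged in Theorem \ref{petersson} (together with the earlier convergence and Eichler‑integral machinery), and what remains is the elementary argument above. The only genuinely fiddly point is the index bookkeeping that matches the exponents $n_m$ of the correction terms at $i\infty$ with the Fourier exponents $s_m$ of the basis cusp forms through the shift $\kappa_{j,i}+\kappa'_{j,i}$, and the verification that these correction terms land in the principal part rather than the holomorphic part — which is precisely where the choice of the pairs $(s_m,t_m)$ in (\ref{related1}) and the cusp condition on the $\varphi_l$ enter.
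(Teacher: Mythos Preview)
Your proposal is correct and follows essentially the same route as the paper: use Theorem \ref{petersson} to convert existence into the vanishing of the $d$ residue functionals, then exploit the nonsingularity of the matrix $C=[a_{l,0}(s_m,t_m)]$ from (\ref{related1}) to solve a $d\times d$ linear system for the extra coefficients at $i\infty$. Your treatment is in fact slightly more careful than the paper's, since you verify explicitly that the added terms land in the principal part (using the cusp condition on the $\varphi_l$ to rule out $s_m=0$ when $\kappa_{t_m,0}=0$), a point the paper leaves implicit.
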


\begin{proof} [\bf Proof of Proposition \ref{related2}]
Let  $(s_l,t_l),\ 1\leq l\leq d$, be as in (\ref{related1}) and $C = [c_{lj}]$, where $c_{lj} = a_{l,0}(s_j,t_j)$ and $a_{l,0}(s_j,t_j)$ are Fourier coefficients of $\varphi_l(\tau)$.
We shall construct $g(\tau)\in M^!_{-k,\chi,\rho}(\Gamma)$ whose principal part at $q_i$ for $1\leq i\leq t$ is $P_i$ and whose principal part at $q_0$ is given by
\[g(\tau) = P_0 + \sum_{l=1}^d a(s_l,t_l)e^{2\pi i(-s_l-\kappa'_{t_l,0})/\lambda_0}\mbf{e}_{t_l}.\]

By using Theorem \ref{petersson} we see that the necessary and sufficient condition for $g(\tau)$ to exist is that there is a solution $A= (a(s_1,t_1),\cdots, a(s_d,t_d))$ of the equation
\begin{equation} \label{3.5}
CA = B,
\end{equation}
where $B = (b_1,\cdots, b_d)$ with $b_l = -\sum_{i=0}^t\Res(P_i(\varphi_l)_i,q_i)$.
But since $C$ is nonsingular, (\ref{3.5}) has a unique solution for $A$. With these values for $a(s_1,t_1),\cdots, a(s_d,t_d)$, the condition in Theorem \ref{petersson} is satisfied by $g(\tau)$, and hence $g(\tau)$ exists.
\end{proof}

\medskip

\subsection{Surjectivity of $\eta$} \label{section3.8}
In this subsection, we prove the surjectivity of $\eta$. Assume that the dimension of the vector space $S_{k+2,\bar{\chi},\bar{\rho}}(\Gamma)$ is $d$. Since $\eta$ is one-to-one and $H^1_{-k,\chi,\rho}(\Gamma,\mc{P}) \cong \tilde{H}^1_{-k,\chi,\rho}(\Gamma,\mc{P})$, it suffices to show that $\dim \tilde{H}^1_{-k,\chi,\rho}(\Gamma,\mc{P})$ is at most $d$.

If $d = 0$, then by Theorem \ref{petersson}  there exists a $g(\tau)\in M^!_{-k,\chi,\rho}(\Gamma)$ with any preassigned principal parts at each of the cusps. Given a parabolic cocycle $\{g_\gamma| \gamma\in\Gamma\}$, we obtain $\Phi(\tau)$ whose expansions at the cusps have principal parts as given in Remark \ref{leftexpansion}. Then we use Theorem \ref{petersson} to obtain  $g(\tau)\in M^!_{-k,\chi,\rho}(\Gamma)$
whose expansions at the cusps have principal parts which agree with those of $\Phi(\tau)$.
 Now set $\Phi^*(\tau) = \Phi(\tau) - g(\tau)$. Then $(\Phi^*|_{-k,\chi,\rho}\gamma)(\tau)-\Phi^*(\tau) = g_\gamma(\tau)$ for all $\gamma\in\Gamma$ and $\Phi^*(\tau)\in\mc{P}$. Thus every parabolic cocycle is a coboundary, and hence the dimension of $\tilde{H}^1_{-k,\chi,\rho}(\Gamma,\mc{P})$ is zero. This completes the proof of Theorem \ref{mainvvmreal} when $d=0$.

 Now suppose that $\dim S_{k+2,\bar{\chi},\bar{\rho}}(\Gamma) = d>0$. Since $\eta$ is one-to-one, we know that there exist at least $d$ linearly independent elements in $\tilde{H}^1_{-k,\chi,\rho}(\gamma,\mc{P})$. Let $d$ of these linearly independent elements be
 \[<g^1_\gamma|\ \gamma\in\Gamma>, \cdots, <g^d_\gamma|\ \gamma\in\Gamma>,\]
 and let
 \[\{g^1_\gamma|\ \gamma\in\Gamma\}, \cdots, \{g^d_\gamma|\ \gamma\in\Gamma\}\]
 be responsible representatives. Corresponding to each of these parabolic cocycles, by using Theorem \ref{converse}, we obtain $\Phi_l(\tau),\ 1\leq l\leq d$.

 By Proposition \ref{related2}, for each $\Phi_l(\tau)$, there exists $f_l(\tau)\in M^!_{-k,\chi,\rho}(\Gamma)$ such that except possible $d$ terms in the expansion of $f_l(\tau)$ at the infinite cusp $f_l(\tau)$ has the same principal parts as $\Phi_l(\tau)$ in all the expansions at the cusps. Thus, if we let $\Phi^*_l(\tau) = \Phi_l(\tau)-f_l(\tau)$, then after renaming coefficients we have
 \begin{eqnarray*}
 \Phi^*_l(\tau) &=& g_i(\tau) + (\tau-q_i)^k\sum_{j=1}^p \sum_{n+\kappa_{j,i}\geq0}a_i^l(n,j)e^{\frac{-2\pi i(n+\kappa_{j,i})}{\lambda_i(\tau-q_i)}}\mbf{e}_j,\ 1\leq i\leq t,\\
\Phi^*_l(\tau) &=& g_0(\tau) + \sum_{l=1}^d a^l(s_l,t_l)e^{2\pi i(-s_l-\kappa'_{t_l,0})/\lambda_0}\mbf{e}_{t_l} + \sum_{j=1}^p \sum_{n+\kappa_{j,0}\geq0} a_0^l(n,j)e^{2\pi i(n+\kappa_{j,0})/\lambda_0}\mbf{e}_j,\ i=0.
 \end{eqnarray*}
  The Eichler integrals $\Phi^*_l(\tau)$ are linearly independent. Indeed, otherwise there exist $\beta_1,\cdots, \beta_d$ not all zero such that
 \[\sum_{l=1}^d \beta_l\Phi^*_l(\tau) = 0.\]
 Let $g_\gamma^l(\tau) := (\Phi^*_l|_{-k,\chi,\rho}\gamma)(\tau) - \Phi^*_l(\tau)$. Then one can see that $\sum_{l=1}^d \beta_l<g^l_\gamma> = 0$, where we have written $<g^l_\gamma>$ in place of $<g^l_\gamma|\ \gamma\in\Gamma>$ for simplicity. But this contradicts the assumption that $<g^1_\gamma>, \cdots, <g^d_\gamma>$ are linearly independent.

Using the above, we can show that $\{<g^1_\gamma>, \cdots, <g^d_\gamma>\}$ is a basis of $\tilde{H}^1_{-k,\chi,\rho}(\Gamma,\mc{P})$ as follows.
If $<g_\gamma>\neq0$ is now an element in $\tilde{H}^1_{-k,\chi,\rho}(\Gamma,\mc{P})$, we shall show that $<g_\gamma>$ is a linear combination of $<g^1_\gamma>, \cdots, <g^d_\gamma>$. To this end, as we did above, we first obtain $\Phi(\tau)$ corresponding to $<g_\gamma>$. Once again using Propositon \ref{related2}, we obtain $f(\tau)\in M^!_{-k,\chi,\rho}(\Gamma)$ such that
\begin{eqnarray*}
 \Phi^*(\tau) &=& g_i(\tau) + (\tau-q_i)^k\sum_{j=1}^p \sum_{n+\kappa_{j,i}\geq0}a_i(n,j)e^{\frac{-2\pi i(n+\kappa_{j,i})}{\lambda_i(\tau-q_i)}}\mbf{e}_j,\ 1\leq i\leq t,\\
\Phi^*(\tau) &=& g_0(\tau) + \sum_{l=1}^d a(s_l,t_l)e^{2\pi i(-s_l-\kappa'_{t_l,0})/\lambda_0}\mbf{e}_{t_l} + \sum_{j=1}^p \sum_{n+\kappa_{j,0}\geq0} a_0(n,j)e^{\frac{2\pi i(n+\kappa_{j,0})}{\lambda}}\mbf{e}_j,\ i=0,
 \end{eqnarray*}
where $\Phi^*(\tau) = \Phi(\tau)-f(\tau)$.

Since the Eichler integrals $\Phi^*_l(\tau)$ are linearly independent, we notice that the determinant of the matrix
\begin{equation*}
\sm a^1(s_1,t_1) & \cdot & \cdot & \cdot & a^1(s_d,t_d)\\
    \cdot        & \cdot &       &       & \cdot       \\
    \cdot        &       & \cdot &       & \cdot       \\
    \cdot        &       &       & \cdot & \cdot       \\
    a^d(s_1,t_1) & \cdot & \cdot & \cdot & a^d(s_d,t_d) \esm
\end{equation*}
is non-zero. Otherwise there exist $\beta_1,\cdots, \beta_d$ not all zero such that $\sum_{l=1}^d \beta_l\Phi^*_l(\tau)$ has no poles at infinity. Then $\sum_{l=1}^d \beta_l\Phi^*_l(\tau) \in \mc{P}$. From this we can deduce that $\{\sum_{l=1}^d \beta_l g^l_\gamma|\ \gamma\in\Gamma\}$ is a coboundary, which is a contradiction because $<g^1_\gamma>, \cdots, <g^d_\gamma>$ are linearly independent. Thus, there exist $\alpha_1,\cdots, \alpha_d$ such that
\[a(s_\delta,t_\delta) + \sum_{l=1}^d \alpha_l a^l(s_\delta,t_\delta) = 0,\]
for all $1\leq \delta \leq d$.

Now define
\[\Psi(\tau) := \Phi^*(\tau) + \sum_{l=1}^d \alpha_l\Phi^*_l(\tau).\]
Then $\Psi(\tau)\in \mc{P}$ and
\[(\Psi|_{-k,\chi,\rho}\gamma)(\tau) - \Psi(\tau) = g_\gamma(\tau) + \sum_{l=1}^d \alpha_l g_\gamma^l(\tau).\]
Thus, $\{g_\gamma+ \sum_{l=1}^d\alpha_l g^l_\gamma|\ \gamma\in\Gamma\}$ is a coboundary and hence $<g_\gamma> = -\sum_{l=1}^d \alpha_l<g_\gamma^l>$. This proves that $<g_\gamma>$ is a linear combination of $<g^1_\gamma>, \cdots, <g^d_\gamma>$ and hence $\eta$ is surjective. This completes the proof of Theorem \ref{mainvvmreal}.

\section{Proof of main theorems}  \label{section4}
In this section, we prove the main theorems: Theorem \ref{main1}, Theorem \ref{main2} and Theorem \ref{main3}. To prove Theorem \ref{main1}, we need the following lemma.

\begin{lem} \label{expansion2}
Let $k \in\RR$ and $\chi$ a multiplier system of weight $-k+\frac j2$.
The theta expansion gives an isomorphism
\[H^1_{-k+\frac j2, \mc{M},\chi}(\Gamma^{(1,j)},\mc{P}^e_{\mc{M}})\cong H^1_{-k,\chi'',\rho''}(\Gamma,\mc{P}),\]
and
\[\tilde{H}^1_{-k+\frac j2, \mc{M},\chi}(\Gamma^{(1,j)},\mc{P}^e_{\mc{M}})\cong \tilde{H}^1_{-k,\chi'',\rho''}(\Gamma,\mc{P}),\]
where $\chi''$ and $\rho''$ are given as in (\ref{doublechi}) and (\ref{doublerho}), respectively.
\end{lem}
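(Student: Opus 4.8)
The plan is to push the whole statement through the theta expansion, reducing it to a single equivariant bijection of coefficient modules together with an elementary analysis of the normal subgroup $(\ZZ^{(j,1)})^2\subseteq\Gamma^{(1,j)}$. First I would build the coefficient‑level map. By Theorem~\ref{decomposition} and Lemma~\ref{lem2}, every $g\in\mc{P}^e_{\mc{M}}$ has a unique theta expansion $g(\tau,z)=\sum_{a\in\mc{N}}f_a(\tau)\theta_{2\mc{M},a,0}(\tau,z)$ with the $f_a$ holomorphic on $\HH$; set $\Theta(g):=\sum_{a\in\mc{N}}f_a\mbf{e}_a$. I claim $\Theta$ is a $\CC$‑linear isomorphism $\mc{P}^e_{\mc{M}}\xrightarrow{\ \sim\ }\mc{P}$. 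Condition~(2) of Definition~\ref{dfnofpme} imposes nothing here, since by Lemma~\ref{lem1}(2) and the manifest $\mu$‑periodicity of the $\theta_{2\mc{M},a,0}$ any sum $\sum_a f_a\theta_{2\mc{M},a,0}$ automatically satisfies it; the one substantive point is that growth condition~(1) of Definition~\ref{dfnofpme} for $g$ is equivalent to the bound~(\ref{growthconditionforp}) for $\Theta(g)$. For one direction one recovers $f_a(\tau)$ by integrating $g(\tau,\cdot)$ against $\overline{\theta_{2\mc{M},a,0}(\tau,\cdot)}$ over a fundamental parallelogram for $z\mapsto z+\lambda\tau+\mu$, with the Gaussian weight furnished by Lemma~\ref{lem2}; the factor $e^{2\pi\tr(\mc{M}yy^t)/v}$ in Definition~\ref{dfnofpme}(1) and the exponential growth of the theta functions cancel against that weight, leaving a polynomial bound in $|\tau|$ and $v^{-1}$. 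For the other direction one uses the standard estimate that $e^{-2\pi\tr(\mc{M}yy^t)/v}|\theta_{2\mc{M},a,0}(\tau,z)|$ is bounded by a power of $v$ and of $v^{-1}$, together with~(\ref{growthconditionforp}).

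Next I would check that $\Theta$ is equivariant, namely $\Theta\bigl(g|_{-k+\frac j2,\mc{M},\chi}(\gamma,0)\bigr)=\Theta(g)|_{-k,\chi'',\rho''}\gamma$ for $\gamma\in\Gamma$. Since $\Gamma\subseteq\SL(2,\ZZ)$, it suffices to verify this on words in $T$ and $S$, where it is exactly the transformation computation behind Theorems~\ref{decomposition} and \ref{isomorphism} with $\rho''$ and $\chi''$ given by (\ref{representation1}), (\ref{representation2}), (\ref{doublerho}) and (\ref{doublechi}). Because every element of $\mc{P}^e_{\mc{M}}$ is fixed by $|_{\mc{M}}X$, the normal subgroup $(\ZZ^{(j,1)})^2$ acts trivially on $\mc{P}^e_{\mc{M}}$, so $g|_{-k+\frac j2,\mc{M},\chi}(\gamma,X)=g|_{-k+\frac j2,\mc{M},\chi}(\gamma,0)$ for $g\in\mc{P}^e_{\mc{M}}$ (the right‑hand side again lying in $\mc{P}^e_{\mc{M}}$, as noted after Definition~\ref{dfnofpme}). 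Consequently, using the subgroup $\{(\gamma,0):\gamma\in\Gamma\}\cong\Gamma$, restriction sends a cocycle $\{p_{(\gamma,X)}\}$ for $\Gamma^{(1,j)}$ in $\mc{P}^e_{\mc{M}}$ to the cocycle $\gamma\mapsto\Theta(p_{(\gamma,0)})$ for $\Gamma$ in $\mc{P}$, while $\{r_\gamma\}\mapsto\{p_{(\gamma,X)}:=\Theta^{-1}(r_\gamma)\}$ (independent of $X$) is a cocycle for $\Gamma^{(1,j)}$, by the composition law $(\gamma_1,X_1)(\gamma_2,X_2)=(\gamma_1\gamma_2,\widetilde{X_1}+X_2)$ and the triviality of the $X$‑action. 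Both maps carry coboundaries to coboundaries and parabolic cocycles to parabolic cocycles, since $\Theta$ intertwines $|_{-k+\frac j2,\mc{M},\chi}(B,0)$ with $|_{-k,\chi'',\rho''}B$ for parabolic $B\in\Gamma$; and (restriction)$\circ$(extension) is the identity at the level of cocycles, so on cohomology it already exhibits restriction as having a section.

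The hard part will be showing that restriction is \emph{injective} on cohomology, i.e.\ that a $\Gamma^{(1,j)}$‑cocycle whose $\Gamma$‑restriction is a coboundary is itself a coboundary; together with the section just constructed this gives the two isomorphisms. Subtracting the coboundary, one must show that a cocycle $\{p_{(\gamma,X)}\}$ with $p_{(\gamma,0)}=0$ for all $\gamma$ is a coboundary. The cocycle relations then force $X\mapsto p_{(1,X)}$ to be additive and to satisfy $p_{(1,X\gamma)}=p_{(1,X)}|_{-k+\frac j2,\mc{M},\chi}(\gamma,0)$, i.e.\ to be a $\Gamma$‑equivariant homomorphism from $(\ZZ^{(j,1)})^2$ — which as a $\ZZ[\Gamma]$‑module is a direct sum of $j$ copies of the standard rank‑two representation — into $\mc{P}^e_{\mc{M}}$; moreover, since $(\ZZ^{(j,1)})^2$ acts trivially on $\mc{P}^e_{\mc{M}}$, such a cocycle is a coboundary if and only if it is zero. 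So everything reduces to the vanishing $\mathrm{Hom}_\Gamma\bigl((\ZZ^{(j,1)})^2,\mc{P}^e_{\mc{M}}\bigr)=0$. To prove this, restrict a hypothetical nonzero homomorphism to one standard summand and apply $\Theta$, obtaining $A,B\in\mc{P}^e_{\mc{M}}$, not both zero, with $A|_{-k+\frac j2,\mc{M},\chi}(\gamma,0)=aA+bB$ and $B|_{-k+\frac j2,\mc{M},\chi}(\gamma,0)=cA+dB$ for all $\gamma=\sm a&b\\c&d\esm\in\Gamma$. Evaluating on the generators of $\Gamma$ — using the relation attached to $-I$ (when $-I\in\Gamma$; otherwise an analogous product of generators playing the same role) to pin down the behaviour of $A,B$ in the $z$‑variable, and the parabolic generators to control their behaviour at the cusps — one obtains functional equations whose only solution compatible with holomorphy, the $\mc{M}$‑ellipticity, and the polynomial growth built into Definition~\ref{dfnofpme} is $A=B=0$. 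This is the step where the growth condition is indispensable, and it is the Jacobi analogue of the fact that Knopp's space $\mc{P}$ carries no spurious finite‑dimensional $\Gamma$‑subrepresentation. Granting this vanishing, restriction is bijective on $H^1$, and the identical argument applied only to parabolic cocycles gives the bijection on $\tilde H^1$, which is the assertion of the lemma.
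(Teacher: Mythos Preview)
Your overall scheme agrees with the paper's, and in one respect you are more careful: the paper simply asserts that ``$p_{(\gamma,X)}$ is determined independently of the choice of $X$'' from the elliptic transformation property alone, whereas you correctly recognise that this amounts to showing the restricted cocycle $X\mapsto p_{(I,X)}$ vanishes, and you reduce that to $\mathrm{Hom}_\Gamma\bigl((\ZZ^{(j,1)})^2,\mc{P}^e_{\mc{M}}\bigr)=0$.

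That vanishing, however, is where your argument breaks down. Transporting through your isomorphism $\Theta$, an element of $\mathrm{Hom}_\Gamma(\ZZ^2,\mc{P})$ (one standard summand) is a pair $A,B\in\mc{P}$ satisfying $A|_{-k,\chi'',\rho''}\gamma=aA+bB$ and $B|_{-k,\chi'',\rho''}\gamma=cA+dB$ for every $\gamma=\sm a&b\\c&d\esm\in\Gamma$. Now take any nonzero $B\in\mc{P}$ fixed by $|_{-k+1,\chi'',\rho''}$ and set $A:=\tau B$; a one-line check shows both identities hold, so $\mathrm{Hom}_\Gamma(\ZZ^2,\mc{P})\supseteq M_{-k+1,\chi'',\rho''}(\Gamma)$. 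Worse, such a pair \emph{does} extend to the $\Gamma^{(1,j)}$-cocycle $p_{(\gamma,(\lambda,\mu))}:=\lambda\,\Theta^{-1}(A)+\mu\,\Theta^{-1}(B)$ in $\mc{P}^e_{\mc{M}}$, which restricts to zero on $\Gamma$ yet is not a coboundary (coboundaries in $\mc{P}^e_{\mc{M}}$ are $X$-independent). Thus your claimed vanishing already forces $M_{-k+1,\chi'',\rho''}(\Gamma)=0$, which you do not establish and which is not obvious for general $k,\mc{M},\chi$; the appeal to ``functional equations whose only solution \ldots\ is $A=B=0$'' does not exclude this explicit family. Without settling this point, restriction on $H^1$ is not shown to be injective and the lemma does not follow by your route.
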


\begin{proof} [\bf Proof of Lemma \ref{expansion2}]
This proof follows directly from the properties of the theta expansion. Let $\sum_{a\in\mc{N}}f_a(\tau)\mbf{e}_a$ be a vector-valued function on $\HH$. Then we have the following properties
\begin{enumerate}
\item[(1)] for $(\gamma,X)\in\Gamma^{(1,j)}$, we have
\begin{equation*}
\biggl(\biggl(\sum_{a\in\mc{N}}f_a\theta_{2\mc{M},a,0}\biggr)\biggr|_{-k+\frac j2, \mc{M},\chi}(\gamma,X)\biggr)(\tau,z) = \sum_{a\in\mc{N}}F_a(\tau)\theta_{2\mc{M},a,0}(\tau,z),
\end{equation*}
where $\sum_{a\in\mc{N}}F_a(\tau)\mbf{e}_a = \biggl(\biggl(\sum_{a\in\mc{N}}f_a(\tau)\mbf{e}_a\biggr)\biggr|_{-k,\chi'',\rho''}\gamma\biggr)(\tau)$,

\item[(2)] $\sum_{a\in\mc{N}}f_a(\tau)\theta_{2\mc{M},a,0}(\tau,z)\in \mc{P}^e_{\mc{M}}$ if and only if
$\sum_{a\in\mc{N}}f_a(\tau)\mbf{e}_a\in \mc{P}$.
\end{enumerate}
{\color{blue} The first property follows from the transformation formula of the theta function $\theta_{2\mc{M},a,0}(\tau,z)$ and the definition of the representation $\rho''$. The second property comes from the fact that the theta function $\theta_{2\mc{M},a,0}(\tau,z)$ is an element of $\mc{P}_{\mc{M}}^e$.}

For a given cocycle $\{p_{(\gamma,X)}|\ (\gamma,X)\in\Gamma^{(1,j)}\}$ in $\mc{P}^e_{\mc{M}}$, we can write
\begin{equation} \label{cocyclethetaexpansion}
p_{(\gamma,X)}(\tau,z) = \sum_{a\in\mc{N}} g_{a, \gamma}(\tau)\theta_{2\mc{M},a,0}(\tau,z),
\end{equation}
where $g_{\gamma}(\tau) := \sum_{a\in\mc{N}}g_{a,\gamma}(\tau)\mbf{e}_a\in \mc{P}$.
Note that $p_{(\gamma,X)}(\tau,z)$ satisfies the elliptic transformation property as in (\ref{elliptic}) so that $p_{(\gamma,X)}(\tau,z)$ is determined independently of the choice of $X$. Therefore,
$g_{a,\gamma}(\tau)$ is well defined.
 Then, by the properties of theta expansion as we noted above, we see that the collection $\{g_{\gamma}|\ \gamma\in\Gamma\}$ is a cocycle in $\mc{P}$. Conversely, if a cocycle $\{g_{\gamma} = \sum_{a\in\mc{N}}g_{a,\gamma}\mbf{e}_a|\ \gamma\in\Gamma\}$ in $\mc{P}$ is given, then we can construct $p_{(\gamma,X)}(\tau,z)$ as in (\ref{cocyclethetaexpansion}). Then the collection $\{p_{(\gamma,X)}|\ (\gamma,X)\in\Gamma^{(1,j)}\}$ is a cocycle in $\mc{P}^e_{\mc{M}}$. Therefore, there is a bijection between cocycles in $\mc{P}^e_{\mc{M}}$ and cocycles in $\mc{P}$.
 Similarly, we can show that there is a bijection between parabolic cocycles (resp. coboundaries) in  $\mc{P}^e_{\mc{M}}$ and parabolic cocycles (resp. coboundaries) in $\mc{P}$.
 Therefore, the theta expansion induces a map on cocycles (resp. parabolic cocycles) and it sends coboundaries to coboundaries. Hence we have the desired isomorphisms.
 \end{proof}

\begin{proof} [\bf Proof of Theorem \ref{main1}]
First we define the mapping $\tilde{\eta} : S_{k+2+\frac j2,\mc{M},\chi}(\Gamma) \to H^1_{-k+\frac j2, \mc{M}, \chi}(\Gamma^{(1,j)},\mc{P}_{\mc{M}}^e)$.
 Let $\Phi(\tau,z)$ be a Jacobi cusp form in $S_{k+2+\frac j2, \mc{M},\chi}(\Gamma)$. Then $\Phi(\tau,z)$ can be written as follows by the theta expansion
\[\Phi(\tau,z) = \sum_{a\in\mc{N}} f_a(\tau)\theta_{2\mc{M},a,0}(\tau,z).\]
Using this, we define a map $\tilde{\eta}$ as follws: $\tilde{\eta}(\Phi)$ is the class of a cocycle $\{p_{(\gamma,X)}|\ (\gamma,X)\in \Gamma^{(1,j)}\}$, where
\[p_{(\gamma,X)}(\tau,z) =  \sum_{a\in\mc{N}}F_{a,\gamma}(\tau)\theta_{2\mc{M},a,0}(\tau,z)\]
and
\[F_{a,\gamma}(\tau) = \biggl[\int^{i\infty}_{\gamma^{-1}(i\infty)} f_{a}(w)(w-\bar{\tau})^kdw\biggr]^-.\]
In the definition of $\tilde{\eta}$ the map $\tilde{\eta}$ is a composition of three maps:
two maps are induced by the theta expansion and the other map is given by $\eta$ in Theorem \ref{mainvvmreal}.
We know that two maps induced by the theta expansion are isomorphisms by Theorem \ref{isomorphism} and Lemma \ref{expansion2} as follows:
\[S_{k+2+\frac j2,\mc{M},\chi} \cong S_{k+2,\epsilon'',\rho''}(\Gamma)\]
and
\[H^1_{-k+\frac j2,\mc{M},\chi}(\Gamma^{(1,j)},\mc{P}^e_{\mc{M}}) \cong H^1_{-k,\chi'',\rho''}(\Gamma,\mc{P}).\]
Since $\rho''$ is unitary, by Theorem \ref{mainvvmreal} $\eta$ is also an isomorphism. Therefore, $\tilde{\eta}$ is also an isomorphism. This prove Theorem \ref{main1}. By the same way, we can prove Theorem \ref{main2}.
\end{proof}

\begin{proof} [\bf Proof of Theorem \ref{main3}]
Let $H(\tau) := \sum_{\mu=1}^{2m} h_\mu(\tau)\mbf{e}_\mu$ be a vector-valued function on $\HH$, where
\[h_\mu(\tau) = \sum_{n\in\ZZ\atop N(n)>0}C_\mu(N(n))q^{\frac{N(n)}{4m}},\]
where $N(n) = \frac{4m(n+\kappa)}{\lambda}-r^2$.
As in the proof of Theorem \ref{main1}, we see that $\tilde{\eta}(\Phi)$ is a cocycle class given by a cocycle representative $\{r(\Phi, (\gamma,X);\tau,z)|\ (\gamma,X)\in\Gamma^{(1,1)}\}$, where a function
$r(\Phi, (\gamma,X);\tau,z)$ on $\HH\times\CC$ is defined by
\[r(\Phi, (\gamma,X);\tau,z) = \sum_{\mu=1}^{2m} r_{\mu,\gamma}(\tau)\theta_{\mu}(\tau,z).\]
Here, functions $r_{\mu,\gamma}(\tau)$ and $\theta_{\mu}(\tau,z)$ are given by
\[r_{\mu,\gamma}(\tau) = \biggl[\int^{i\infty}_{\gamma^{-1}(i\infty)} h_\mu(w)(w-\bar{\tau})^kdw\biggr]^-,\]
and
\[\theta_{\mu}(\tau,z) = \sum_{r\equiv \mu(\m\ 2m)}q^{r^2}\zeta^r.\]

Now we compute $r_{\mu,\gamma}(\tau)$ explicitly.
Since $\gamma^{-1}(i\infty) = -\frac dc$, we have
\begin{eqnarray*}
r_{\mu,\gamma}(\tau) &=&\biggl[\int^{i\infty}_{-\frac dc} h_\mu(w)(w-\bar{\tau})^k dw\biggr]^-\\
&=&\biggl[\int^{i\infty}_{0} h_\mu\biggl(w-\frac dc\biggr)\biggl(w-\overline{\biggl(\tau+\frac dc\biggr)}\biggr)^k dw\biggr]^-\\
&=& \sum_{n=0}^k \mat k\\ n\emat \biggl(\tau+\frac dc\biggr)^{k-n}(-1)^{k-n}\biggl[\int^{i\infty}_0 h_\mu\biggl(w-\frac dc\biggr)w^ndw\biggr]^-.
\end{eqnarray*}
Then we put into the Fourier expansion of $h_\mu(\tau)$ and we obtain that
\begin{eqnarray*}
\int^{i\infty}_0 h_\mu\biggl(w-\frac dc\biggr)w^ndw &=& i^{n+1}\sum_{n\in\ZZ\atop N(n)>0} C_\mu(N(n))e^{2\pi i\frac{-dN(n)}{4mc}}\int^\infty_0 e^{-2\pi\frac {N(n)}{4m}t}t^n dt\\
&=& i^{n+1}\sum_{n\in\ZZ\atop N(n)>0} C_\mu(N(n))e^{2\pi i\frac{-dN(n)}{4mc}}\biggl(\frac{2\pi N(n)}{4m}\biggr)^{-(n+1)}\Gamma(n+1).
\end{eqnarray*}
Hence, we can check that
\begin{eqnarray*}
r_{\mu,\gamma}(\tau) &=&  \sum_{n=0}^k \mat k\\ n\emat \biggl(\tau+\frac dc\biggr)^{k-n}(-1)^{k-n} \\
&&\times(-i)^{n+1}\sum_{n\in\ZZ\atop N(n)>0} \overline{C_\mu(N(n))}e^{2\pi i\frac{dN(n)}{4mc}}\biggl(\frac{2\pi N(n)}{4m}\biggr)^{-(n+1)}\Gamma(n+1)\\
&=& \sum_{n=0}^{k} \sum_{n\in\ZZ\atop N(n)>0}  \frac{\overline{C_\mu(N(n))} e^{2\pi i\frac{dN(n)}{4mc}}} {\biggl(\frac{N(n)}{4m}\biggr)^{n+1}}\frac{k!(-1)^{k+n}}{(k-n)!(2\pi i)^{n+1}}\biggl(\tau+\frac dc\biggr)^{k-n}\\
&=& \sum_{n=0}^{k} \frac{k!(-1)^{k+n} \overline{L(\Phi, \mu,\gamma,n+1)}}{(k-n)!(2\pi i)^{n+1}}\biggl(\tau+\frac dc\biggr)^{k-n}.
\end{eqnarray*}
In conclusion, we see that
\begin{eqnarray*}
r(\Phi,(\gamma,X);\tau,z) &=& \sum_{\mu=1}^{2m} r_{\mu,\gamma}(\tau)\theta_{\mu}(\tau,z)\\
&=& \sum_{\mu=1}^{2m} \sum_{r\equiv\mu(\m\ 2m)} \sum_{k=0}^{n} \frac{k!(-1)^{k+n} \overline{L(\Phi, \mu,\gamma,n+1)}}{(k-n)!(2\pi i)^{n+1}}\biggl(\tau+\frac dc\biggr)^{k-n}q^{\frac{r^2}{4m}}\zeta^r.
\end{eqnarray*}
This completes the proof.
\end{proof}

\section{Concluding remarks} \label{section5}
The authors of \cite{CL} also defined a coefficient module $\mc{P}_{\mc{M}}$ which is the space of holomorphic functions on $\HH\times\CC^{(j,1)}$ satisfying only the growth condition as in (1) of Definition \ref{dfnofpme}. This space is analogous to the holomorphic function space $\mc{P}$, which was used to establish the Eichler cohomology theory for modular forms of  real weights by Knopp \cite{K}. Therefore, we are led to ask whether an analogue of Theorem \ref{main1} is true for $\mc{P}_{\mc{M}}$ with a suitable replacement for $S_{k+2+\frac j2,\mc{M},\bar{\chi}}(\Gamma^{(1,j)})$.

Comparison of $\mc{P}_{\mc{M}}^e$ and $\mc{P}_{\mc{M}}$ suggests that we have the following exact sequence 
\[0 \to \mc{P}_{\mc{M}}^e \to \mc{P}_{\mc{M}} \to \mc{P}_{\mc{M}}/ \mc{P}_{\mc{M}}^e \to 0.\]
This sequence gives rise to an exact cohomology sequence
\begin{eqnarray*}
0 &\to& H^0_{-k+\frac j2,\mc{M},\chi}(\Gamma^{(1,j)},\mc{P}^e_{\mc{M}}) \to H^0_{-k+\frac j2,\mc{M},\chi}(\Gamma^{(1,j)},\mc{P}_{\mc{M}}) \to H^0_{-k+\frac j2,\mc{M},\chi}(\Gamma^{(1,j)},\mc{P}_{\mc{M}}/ \mc{P}_{\mc{M}}^e) \\
&\to& H^1_{-k+\frac j2,\mc{M},\chi}(\Gamma^{(1,j)},\mc{P}^e_{\mc{M}}) \to H^1_{-k+\frac j2,\mc{M},\chi}(\Gamma^{(1,j)},\mc{P}_{\mc{M}}) \to H^1_{-k+\frac j2,\mc{M},\chi}(\Gamma^{(1,j)},\mc{P}_{\mc{M}}/ \mc{P}_{\mc{M}}^e) \\
&\to& H^2_{-k+\frac j2,\mc{M},\chi}(\Gamma^{(1,j)},\mc{P}^e_{\mc{M}}) \to H^2_{-k+\frac j2,\mc{M},\chi}(\Gamma^{(1,j)},\mc{P}_{\mc{M}}) \to \cdots.
\end{eqnarray*}
Then one can check that
\[H^0_{-k+\frac j2,\mc{M},\chi}(\Gamma^{(1,j)},\mc{P}^e_{\mc{M}}) = H^0_{-k+\frac j2,\mc{M},\chi}(\Gamma^{(1,j)},\mc{P}_{\mc{M}})\] and hence
\[H^0_{-k+\frac j2,\mc{M},\chi}(\Gamma^{(1,j)},\mc{P}_{\mc{M}}/ \mc{P}_{\mc{M}}^e)=0\]
by the exactness of the sequence. From this it follows that the map from $H^1_{-k+\frac j2,\mc{M},\chi}(\Gamma^{(1,j)},\mc{P}^e_{\mc{M}})$ to $H^1_{-k+\frac j2,\mc{M},\chi}(\Gamma^{(1,j)},\mc{P}_{\mc{M}})$ is injective.

In fact, one can check that $H^1_{-k+\frac j2,\mc{M},\chi}(\Gamma^{(1,j)},\mc{P}_{\mc{M}})$ is infinite dimensional. More precisely, let $n\in\RR$ with $n>0$ and $r=(r_1,\cdots, r_j)\in\RR^{(1,j)}$ with $r_1\not\in\ZZ$.  Suppose that $\{p^{n,r}_{(\gamma,X)}|\ (\gamma,X)\in \Gamma^{(1,j)}\}$ is a cocycle of elements of $\mc{P}_{\mc{M}}$ of weight $-k+\frac j2$, index $\mc{M}$ and multiplier system $\chi$ defined by
\begin{eqnarray*}
p^{n,r}_{(\gamma,0)} &=& 0,\ \forall \gamma\in\Gamma,\\
p^{n,r}_{(0,(0,\mbf{e}_i))}&=& 0,\ \forall 1\leq i\leq j,\\
p^{n,r}_{(0,(\mbf{e}_i,0))}&=& 0,\ \forall 2\leq i\leq j,\\
p^{n,r}_{(0,(\mbf{e}_1,0))} &=& q^n e^{2\pi i\tr(rz)},
\end{eqnarray*}
where $\mbf{e}_i$ is the $i$th standard basis of $\RR^{(1,j)}$.
Since $n>0$, a function $q^ne^{2\pi i\tr(rz)}\in \mc{P}_{\mc{M}}$ and a cocycle $\{p^{n,r}_{(\gamma,X)}|\ (\gamma,X)\in \Gamma^{(1,j)}\}$ is well-defined. Then this cocycle is not a coboundary. Otherwise there exists $p(\tau,z)\in \mc{P}_{\mc{M}}$ such that
\[p^{n,r}_{(\gamma,X)}(\tau,z) = (p|_{-k+\frac j2,\mc{M},\chi} (\gamma,X))(\tau,z) - p(\tau,z)\]
for all $(\gamma,X)\in\Gamma^{(1,j)}$. By the definition of $p^{n,r}_{(0,(0,\mbf{e}_1))}(\tau,z)$, we see that
\[(p|_{\mc{M}}(0,\mbf{e}_1))(\tau,z) = p(\tau,z)\]
and hence $p(\tau,z)$ is a period function  with a period $1$  as a function of $z_1$, where $z = (z_1,\cdots, z_j)^t\in \CC^{(j,1)}$. On the other hand, by the definition of $p^{n,r}_{(0,(\mbf{e}_1,0))}(\tau,z)$, we have
\[q^n\zeta^r = (p|_{\mc{M}}(\mbf{e}_1,0))(\tau,z) - p(\tau,z).\]
Taking a slash operator $|_{\mc{M}}(0,\mbf{e}_1)$, one can see that
\begin{eqnarray*}
q^ne^{2\pi i\tr(rz)}e^{2\pi ir_1} &=& (p|_{\mc{M}}{(0,(\mbf{e}_1,0))}{(0,(0,\mbf{e}_1))})(\tau,z) - (p|_{\mc{M}}{(0,(0,\mbf{e}_1))})(\tau,z)\\
&=& (p|_{\mc{M}}(\mbf{e}_1,0))(\tau,z) - p(\tau,z)\\
&=& q^n\zeta^r.
\end{eqnarray*}
But this is a contradiction because $r_1\not\in\ZZ$ and so $e^{2\pi ir_1}\neq1$.
By the same way, we can prove that if $n>0$ and $r_1,\cdots, r_m\not\in\ZZ$  then cocycles $\{p^{n,r_1\mbf{e}_1}_{(\gamma,X)}\}, \cdots \{p^{n,r_m\mbf{e}_1}_{(\gamma,X)}\}$ give linearly independent elements in $H^1_{-k+\frac j2,\mc{M},\chi}(\Gamma^{(1,j)},\mc{P}_{\mc{M}})$, where we have written $\{p^{n,r_i\mbf{e}_1}_{(\gamma,X)}\}$ in place of $\{p^{n,r_i\mbf{e}_1}_{(\gamma,X)}|\ (\gamma,X)\in\Gamma^{(1,j)}\}$ for simplicity. From this, we obtain that the cohomology group $H^1_{-k+\frac j2,\mc{M},\chi}(\Gamma^{(1,j)},\mc{P}_{\mc{M}})$ is infinite dimensional and it is strictly larger than
$H^1_{-k+\frac j2,\mc{M},\chi}(\Gamma^{(1,j)},\mc{P}^e_{\mc{M}})$.

\subsection*{Acknowledgement}
The authors appreciate YoungJu Choie for her several comments.

\bigskip

 
\end{document}